\titleformat{\section}{\normalfont\Large\bfseries}{\thesection}{1em}{}
\numberwithin{equation}{section} 
\definecolor{mygreen}{rgb}{0.01,0.6,0.2}
\definecolor{myblue}{rgb}{0.01, 0.18, 1.0}
\newtheorem{theorem}{Theorem}
\newtheorem{proposition}[theorem]{Proposition}
\newtheorem{lemma}[theorem]{Lemma}
\theoremstyle{definition}
\numberwithin{equation}{section}
\numberwithin{theorem}{section}
\numberwithin{equation}{section}
\numberwithin{theorem}{section}
\title{Non variational type critical growth nonlocal system}
\author[A. Dixit, H. Hajaiej \& T. Mukherjee]{ASHUTOSH DIXIT$^{1}$, HICHEM HAJAIEJ$^{2}$ and  TUHINA MUKHERJEE$^1$}
\begin{document}
\def\dxy{\mathrm{d}x \mathrm{d}y}
\def\d{\mathrm{d}}
\def\dxz{\mathrm{d}x \mathrm{d}z}
\def\dyz{\mathrm{d}y \mathrm{d}z}
\def\R{\mathbb{R}}
\def\({\bigg(}
\def\){\bigg)}
\def\H{\mathcal{H}^{1,s}(\R^2)}
\def\Hi{\mathcal{H}^{1,s_i}(\R^2)}
\def\Hone{\mathcal{H}^{1,s_1}(\R^2)}
\def\Htwo{\mathcal{H}^{1,s_2}(\R^2)}
\def\ui{\int_{\R^2} \big(|u|^2 + |\partial_x u|^2 + |(-\Delta)^{s_{i}/2}_y u|^2 \big) \dxy}
\def\uone{|u|^2 + |\partial_x u|^2 + |(-\Delta)^{s_{1}/2}_{y} u|^2 }
\def\unone{|u_n|^2 + |\partial_x u_n|^2 + |(-\Delta)^{s_{1}/2}_{y} u_n|^2 }
\def\v2{\int_{\R^2} \big(|v|^2 + |\partial_x v|^2 + |(-\Delta)^{s_{2}/2}_yv|^2 \big) \dxy}
\def\uq{\(\int_{\R^2}|u|^q \dxy\)^{2/q}}
\def\u2si{\(\int_{\R^2}|u|^{2_{s_i}} \dxy\)^{2/2_{s_i}}}
\def\Dely{(-\Delta)_{y}^{s/2}}
\def\Delone{(-\Delta)_{y}^{s_1/2}}
\def\Deltwo{(-\Delta)_{y}^{s_1/2}}
\def\C_c{C_{c}^{\infty}(\R^2)}
\definecolor{dogwoodrose}{rgb}{0.84, 0.09, 0.41}
\definecolor{lava}{rgb}{0.81, 0.06, 0.13}
\def\tb{\color{blue}}
\definecolor{ao(english)}{rgb}{0.0, 0.5, 0.0}
\def\ao{\color{ao(english)}}
\definecolor{robineggblue}{rgb}{0.0, 0.8, 0.8}
\def\tro{\color{tiffanyblue}}
\definecolor{tiffanyblue}{rgb}{0.04, 0.73, 0.71}
\definecolor{tenné(tawny)}{rgb}{0.8, 0.34, 0.0}
\def\ten{\color{tenné(tawny)}}
\definecolor{aqua}{rgb}{0.0, 1.0, 1.0}
\def\aqua{\color{aqua}}
\maketitle 
\centerline{$^{1}$Department of Mathematics, Indian Institute of Technology Jodhpur,}
 \centerline{Rajasthan, 342030, India}
 
 \centerline{$^{2}$Department of Mathematics, California State University,}
\centerline{Los Angeles, CA 90032,
USA} 
\begin{abstract}
This study investigates the existence, uniqueness, and multiplicity of positive solutions for a system of fractional differential equations given by:
\begin{equation*}
(-\Delta)^{s_i} u_{i}+\lambda_{i} u_{i}=\sum_{j=1}^{n} \alpha_{i j}\left|u_{j}\right|^{q_{i j}}\left|u_{i}\right|^{p_{i j}-2} u_{i} , u_i\in {\mathscr{D}^{s_i,2}\left(\mathbb{R}^{N}\right)}, i=1,2,\cdots,n,
\end{equation*}
where $N>2s=\max\{2s_i\}$, $s_i\in(0,1)$, $n\geq 2$, $\lambda_{i} \geq 0$, $\alpha _{ij}>0$, $p_{ij}<2^{*}_{s}$, and $p_{ij}+q_{ij}=2^{*}_{s}=\min\{{\frac{2N}{N-2s_i}\}}$ for $i\neq j \in \{1,2,...,n\}$.  $2^{*}_s$ called the fractional critical sobolev exponent and $2^{*}_s=2 N /(N-2s)$ for $N > 2s$ and $2^{*}_s=+\infty$ for $N=2s$ or $N<2s$. Our work establishes novel uniqueness and multiplicity results for positive solutions, applicable whether the system possesses a variational structure or not. We provide a comprehensive characterization of the exact number of positive solutions under specific parameter configurations. Our analysis shows that the positive solution set behaves differently across three distinct regimes: $p_{ij}<2$, $p_{ij}=2$, and $2<p_{ij}<2^{*}_{s}$. \\\

{\bf{Keywords:}} Fractional Laplacian, System of equations, Non variational type, critical growth.
\end{abstract}

\maketitle
\section*{1. Introduction }
In recent years, the fractional Laplacian and its associated nonlocal integro-differential equations have emerged as powerful tools in modeling complex phenomena across various scientific and mathematical disciplines. These operators have been extensively applied in a variety of fields, including physics, optimization, thin soft films, population dynamics, geophysical fluid dynamics, finance, phase transitions, stratified materials, water waves, game theory, anomalous diffusion, flame propagation, and many others.
 Their developments have been extensively explored in the foundational works of Caffarelli~\cite{caffarelli2012non} and Vázquez~\cite{vazquez2012nonlinear}, among others. The literature on fractional problems is vast and diverse, reflecting extensive research and developments in the field. For a comprehensive understanding of the fundamental properties of the fractional Laplacian,
we direct readers to the following references for further details~\cite{bhaktanonhomogeneous,fiscella2018p,mingqi2018combined,bisci2015ground,MR3271254,xiang2019superlinear,dihich}. Nonlocal operators like the fractional Laplacian and its nonlinear counterpart, the fractional $p$-Laplacian, extend classical local operators such as the Laplacian $-\Delta$ and $p$-Laplacian $-\Delta_p$. These operators enable generalizations of foundational elliptic problems, including the renowned Bre\'{z}is-Nirenberg problem.

The inherent nonlocality of the fractional Laplacian introduces significant analytical challenges. To address this, Caffarelli and Silvestre pioneered a transformative approach in their seminal work~\cite{caffarelli2007extension}, known as the extension method. This technique reduced nonlocal problems into higher-dimensional spaces, effectively converting them into local formulations. Building on this foundation, contemporary research has focused on mixed operator systems, normalized solutions, and multi-component variational problems \cite{chergui2023existence,luo2022normalized,al2018existence,hajaiej2023normalized,huang2025lazer}. Although E. Abada et al. \cite{abada2021topological} pioneered Leray-Schauder degree theory for two-component fractional systems, their framework collapses for $K\geq 3$ components under critical exponents. This leaves a critical void: no unified non-variational existence theory exists for $K\geq 3$ fractional systems with critical nonlinearities, a gap our work resolves through topological degree adaptations.

\section*{Foundational Results}
In 1983, Brezis and Nirenberg~\cite{brezis1983positive} demonstrated the existence of classical solutions for the critical semilinear problem:
\[
\begin{cases}
-\Delta u = \lambda u + u|u|^{2^*-2}, & \text{in } \Omega, \\
u = 0, & \text{on } \partial \Omega, \\
u > 0, & \text{in } \Omega,
\end{cases}
\]
 when $\lambda \in (0, \lambda_1)$ and $N \geq 4$, where $\lambda_1$ denotes the principal eigenvalue of $-\Delta$ under Dirichlet conditions, $\Omega$ is a bounded domain in $\mathbb{R}^N$ and $2^* = \frac{2N}{N-2}$ is the critical Sobolev exponent. For $N=3$, solutions exist if $\mu < \lambda < \lambda_1$, for a suitable $\mu>0$ (with $\mu = \frac{1}{4}\lambda_1$ if $\Omega$ is a ball). The Pohozaev identity precludes solutions for $\lambda \notin (0, \lambda_1)$ in star-shaped domains.
In 1985, A. Capozzi et al. \cite{capozzi1985existence} later extended these results, proving nontrivial solutions exist for $\lambda > 0$ when $N \geq 4$.
Subsequently, In 1986 Ambrosetti and Struwe~\cite{ambrosetti1986note} employed a dual formulation for problem, enabling direct application of the Mountain-Pass Theorem and critical point framework established by Ambrosetti and Rabinowitz~\cite{ambrosetti1973dual,rabinowitz1986minimax}. This approach yielded a more concise demonstration of nontrivial solution existence for the problem.
In 2016, F.Gladiali et al. \cite{gladiali2016non} studied the following non-variational system
\[
\begin{cases}
-\Delta u_i = \sum_{j=1}^{k} \alpha_{i j}\left|u_{j}\right|^{\frac{N+2}{N-2}}, & \text{in } \mathbb{R}^N, \\
u_i >0, & \text{in }\mathbb{R}^N, \\
u_i\in \mathscr{D}^{1,2}\left(\mathbb{R}^{N}\right),
\end{cases}
\]
The authors establish structural conditions on the matrix $(\alpha_{ij})_{i,j=1}^k$ that guarantee bifurcation of solutions emerging from the critical Sobolev equation.

\section*{Modern Generalizations}
In 2014, Fei Fang \cite{fang2014infinitely} tackle the following fractional Laplacian problem with pure critical nonlinearity
\[
\begin{cases}
(-\Delta)^s u = |u|^{2_s^*-2}u, & \text{in } \mathbb{R}^N, \\
u\in \mathcal{D}^{s,2}(\mathbb{R}^N),
\end{cases}
\]
where $s\in(0,1)$, $2^*=\frac{2N}{N-2s}$, $N$ is a positive integer with $N\geq 3$. They have proved the above problem  has infinitely many non radial sign changing solution.
In 2015, Servadei et al.~\cite{MR3271254} generalized the Brezis-Nirenberg framework to fractional settings:
\[
\begin{cases}
(-\Delta)^s u = \lambda u + u|u|^{2_s^*-2}, & \text{in } \Omega, \\
u = 0, & \text{in } \mathbb{R}^{N} \setminus \Omega,
\end{cases}
\]
where $s \in (0,1)$ and $2_s^* = \frac{2N}{N-2s}$. For $\lambda \in (0, \lambda_{1,s})$ (with $\lambda_{1,s}$ as the principal eigenvalue of $(-\Delta)^s$), nontrivial solutions exist when $N \geq 4s$.
In 2018, J. F. Bonder et al. \cite{bonder2018concentration} extends the well known concentration compactness principle for the Fractional Laplacian in unbounded domain. They have considered the quasi linear fractional Laplacian with critical nonlinearities. 
Now the system of fractional Laplacian in 2017 was studied by Li Wang et al. \cite{wang2017fractional}. They have  established the existence of solution of fractional Laplacian system involving critical nonlinearities using variational method.

In 2021, E. Abada et al. \cite{abada2021topological} discussed the following problem 
\[
\begin{cases}
(-\Delta)^s u(x) + g_1(x,u(x),v(x))=f_1(x) , & \text{in } \Omega, \\
(-\Delta)^s v(x) + g_2(x,u(x),v(x))=f_2(x), & \text{in } \Omega, \\
u=v=0  & \text{in } \mathbb{R}^{N} \setminus \Omega,
\end{cases}
\]
where $s\in(0,1)$, $\Omega$ is a bounded open subset of $\mathbb{R}^N$ with Lipschitz boundary, and $(f_1,f_2)\in L^2(\Omega)\times L^2(\Omega)$ and $g_1,g_2:\Omega\times \mathbb{R}\times \mathbb{R}\to \mathbb{R}$ are satisfying the caratheodary conditions. They employed a non-variational approach based on Leray-Schauder degree theory to establish the existence of solutions for the problem.

The system in \eqref{eq:1.3b} is significantly more intricate than single equations, with its complexity growing as the number of equations increases. Researchers have discovered several distinctive properties of solutions that are absent in single-equation cases. These include the existence and multiplicity of nontrivial solutions, the segregation and synchronization of solution components, and the nodal behavior of solutions. For further details, refer to studies on the subcritical case \cite{ambrosetti2007standing,bartsch2010liouville,byeon2020positive,byeon2021partly,dancer2010priori,peng2013segregated}, and the critical case \cite{guo2017critical,li2018multiple,chen2015positive,clapp2018existence,clapp2019simple,peng2016elliptic,tavares2020existence,lu2020existence}.
Most existing studies on fractional Laplacian systems focus on problems with a built-in variational structure, meaning they can be analyzed using energy minimization techniques. These studies typically rely on variational methods, which work well for system of equations. However, a major limitation remains—there’s very little known about fractional systems with three or more components by using non variational technique. This gap leaves open critical questions about how solutions behave in more complex, multi-equation setups, which our work aims to address.
      
We consider the following system of $n$-coupled equations
\begin{equation}\label{eq:1.1}
(-\Delta)^{s_i} u_{i}+\lambda_{i} u_{i}=\sum_{j=1}^{n} \alpha_{i j}\left|u_{j}\right|^{q_{i j}}\left|u_{i}\right|^{p_{i j}-2} u_{i} \quad \text { in } \Omega, \quad i=1,2, \cdots, n
\end{equation}
where $\Omega = \mathbb{R}^N$ with $N >2s=\max\{2s_i\}$, $n \geq 2$, $\lambda_{i} \geq 0$ for $i=1,2, \cdots$, $n$,  $\alpha_{i j}>0,$  $q_{i j}>0$, and the exponent $p_{i j}+q_{i j}$ satisfying the relation
\[
p_{i j}+q_{i j}=r  ~~ \text{for some r }\in\left(2,2^{*}_s\right]~~ \text{and for} ~~i, j=1,2, \cdots, n.
\]
The term ${(- \Delta)^{s}}$ denotes the fractional Laplace operator, which is for a fixed parameter $s \in (0,1)$ defined by 
$${(- \Delta)^{s}u(x)} = C(N,s) ~\text{P.V.}\int_{\mathbb{R}^N} {\dfrac{u(x)-u(y)}{|x-y|^{N+2s}}} ~ dy $$ 
where the term “P.V.” stands for Cauchy’s principal value, while C(N,s) is a normalizing constant whose explicit expression is given by$$C(N,s)= \bigg( \int_{\mathbb{R}^{N}} {\frac{1-cos({\zeta}_{1})}{|\zeta|^{N+2s}}}~d \zeta \bigg)^{-1}$$

The system \eqref{eq:1.1} possesses a variational structure if and only if $p_{i j}=q_{j i}$ and $\frac{\alpha_{i j}}{p_{i j}}=\frac{\alpha_{j i}}{p_{j i}}$ for all $i \neq j$. In particular, if $p_{i j}=q_{i j}=\frac{r}{2}$ and $\alpha_{i j}=\alpha_{j i}$ then the corresponding energy functional is
\begin{equation}\label{eq:1.2}
J\left(u_{1}, \cdots, u_{n}\right)=\frac{1}{2} \sum_{i=1}^{n} \int_{\mathbb{R}^N}\left(\int_{\mathbb{R}^N}\frac{|u(x)-u(y)|^2}{|x-y|^{N+2s_i}}+\lambda_{i} u_{i}^{2}\right)-\frac{1}{r} \sum_{i, j=1}^{n} \alpha_{i j} \int_{\mathbb{R}^N}\left|u_{i} u_{j}\right|^{\frac{r}{2}}.
\end{equation}

In this paper, we consider system \eqref{eq:1.1} with the fractional critical Sobolev exponent case: $p_{i j}+q_{i j}=2^{*}_s$, and we denote $\eta_{i}:=\alpha_{i i}$ and assume $\lambda_{i}=0$, $\Omega=\mathbb{R}^{N}$ and $N > 2s$ in \eqref{eq:1.1}. This leads us to study the system
\begin{equation}\label{eq:1.3b}
\left\{\begin{array}{l}
(-\Delta)^{s_i} u_{i}=\eta_{i} u_{i}^{2^{*}_s-1}+\sum_{j=1, j \neq i}^{n} \alpha_{i j} u_{i}^{p_{i j}-1} u_{j}^{q_{i j}} \quad ,\text { in } \mathbb{R}^{N}, \\
u_{i}>0, \quad u_{i} \in \mathscr{D}^{s_i,2}\left(\mathbb{R}^{N}\right), \quad i=1,2, \cdots, n.
\end{array}\right.
\end{equation}
where the space $D^{s_i,2}(\mathbb{R}^N)$ is defined as  the completion of $C^{\infty}_0(\mathbb{R}^N)$ under the norm 
$$\|u\|_{D^{s_i,2}(\mathbb{R}^N)}:=\int_{\mathbb{R}^N}|(-\Delta )^{\frac{s_i}{2}}u(x)|^2\mathrm{d}x$$

Throughout the paper, we assume the following-
\begin{itemize}
\item[(A1)] $\eta_{i}>0$ and non decreasing i.e. $\eta_{1} \leq \eta_{2} \leq \cdots \leq \eta_{n}.$
\item[(A2)] $\alpha_{i j}>0$
\item[(A3)] $p_{i j}+q_{i j}=2^{*}_s \text { for } i \neq j \in\{1,2, \cdots, n\}$. 
\end{itemize}
If $p_{ij}<2^*_s$, we have $q_{i j}>0$. However, we allow $p_{i j} \leq 0$ and accordingly $q_{i j} \geq 2^{*}_s$ in some of our results.
\definition
A vector solution $\left(u_{1}, \cdots, u_{n}\right)$ is said to be nontrivial if every component $u_{i}$ is nonzero. In contrast, semitrivial solutions have at least one component equal to zero and at least one component that is nonzero. In this work, we focus on positive solutions, which are nontrivial solutions $\left(u_{1}, u_{2}, \cdots, u_{n}\right)$ satisfying $u_{i}(x) > 0$ for all $i = 1, 2, \cdots, n$ and for all $x \in \mathbb{R}^{N}$.

\definition
A solution $\left(u_{1}, u_{2}, \cdots, u_{n}\right)$ of \eqref{eq:1.3b} is called a synchronized solution if there exist positive numbers $k_{1}, k_{2}, \cdots, k_{n}$ such that $u_{i}=k_{i} U$ for $i=1,2, \cdots, n$, where $U$ is a positive solution of the single equation
\begin{equation}
(-\Delta)^s u=u^{2^{*}_s-1} \quad \text { in } \mathbb{R}^{N}, \quad u \in \mathscr{D}^{s,2}\left(\mathbb{R}^{N}\right)
\end{equation}

According to \cite[Remark 1.1]{fang2014infinitely}, $U$ is unique up to translation and dilation and has the expression
\begin{equation}\label{eq:1.5}
U(x)=\frac{[N(N-2s)]^{\frac{N-2s}{4s}}}{\left(1+|x|^{2}\right)^{\frac{N-2s}{2}}}
\end{equation}

\section{The main results}
The results of this paper is  on uniqueness, multiplicity or exact multiplicity of solutions of \eqref{eq:1.3b} all up to translation and dilation. Since we do not assume any symmetry condition, the system is essentially more general than those having variational structure. For convenience, denote $B=\left(\alpha_{i j}\right)_{n \times n}$.
Our first result deals with the case \textbf{$p_{i j}<2$} and gives existence of $2^{n}-1$ synchronized positive solutions if $\alpha_{i j}(i \neq j)$ are suitably small and existence of one synchronized positive solution if $p_{i j}=p$ for $i \neq j$.
\begin{theorem}\label{thm:2.1}
Assume that $N >2s$ and $ p_{i j}<2$ along with $(A1), (A2)$ and $(A3)$ then
\begin{enumerate}
\item[(a)] If $\alpha_{i j}$ satisfies 
\[
0<\alpha_{i j}<\alpha_{*}:=\frac{1}{2} \min _{1 \leq i \leq n}\left[\sum_{j=1, j \neq i}^{n}\left(\frac{1}{\eta_{j}}\right)^{\frac{N-2s}{4s} q_{i j}}\left(\frac{1}{2 \eta_{i}}\right)^{\frac{N-2s}{4s}\left(p_{i j}-2\right)}\right]^{-1},
\]
then \eqref{eq:1.3b} has at least $2^{n}-1$ synchronized positive solutions.
\item[(b)] If  $p_{i j}=p<2$(constant) for $i \neq j$ and the matrix $B=\left(\alpha_{i j}\right)_{n \times n}$ has an inverse $A=\left(a_{i j}\right)_{n \times n}$ such that
\[
a_{i j}>0 \text{ for } i \neq j \text{ and } \sum_{j=1}^{n} a_{i j}>0 \text{ for } i=1,2, \cdots, n,
\]
then \eqref{eq:1.3b} has at least one synchronized positive solution.
\end{enumerate}
\end{theorem}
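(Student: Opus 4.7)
The plan is to reduce the PDE system \eqref{eq:1.3b} to a purely algebraic one by imposing the synchronized ansatz $u_i = k_i U$ with $U$ the Aubin--Talenti bubble \eqref{eq:1.5} satisfying $(-\Delta)^{s} U = U^{2^*_s-1}$. Plugging in and invoking $p_{ij}+q_{ij}=2^*_s$ from (A3), every term carries the same factor $U^{2^*_s-1}$, and after dividing through by $k_i U^{2^*_s-1}$ the system collapses to
\begin{equation*}
\Phi_i(k_1,\ldots,k_n) := \eta_i k_i^{2^*_s-2} + \sum_{j\neq i} \alpha_{ij}\, k_i^{p_{ij}-2} k_j^{q_{ij}} = 1, \qquad i=1,\ldots,n.
\end{equation*}
Positive solutions of this algebraic system are in bijection with synchronized positive solutions of \eqref{eq:1.3b}. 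Write $k_i^\ast := \eta_i^{-(N-2s)/(4s)} = \eta_i^{-1/(2^*_s-2)}$ for the decoupled root satisfying $\eta_i(k_i^\ast)^{2^*_s-2}=1$.

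For part (a), the crucial observation is that $p_{ij}<2$ forces the partial map $t \mapsto \Phi_i(\ldots,t,\ldots)$ to blow up at both $t\to 0^+$ and $t\to +\infty$, so whenever its infimum is $<1$ the equation $\Phi_i=1$ has exactly two positive roots, a small one $t_i^-(k_{-i})$ and a large one $t_i^+(k_{-i})$. The choice of $\alpha_*$ is calibrated so that testing $\Phi_i$ at the point $t = k_i^\ast/2^{1/(2^*_s-2)}$ (where $\eta_i t^{2^*_s-2}=1/2$ and $t^{p_{ij}-2}=(1/(2\eta_i))^{(N-2s)(p_{ij}-2)/(4s)}$) gives a coupling contribution $<1/2$ whenever $\alpha_{ij}<\alpha_*$ and the other coordinates $k_j$ stay near $k_j^\ast$; this certifies the two-root structure uniformly on the relevant domain. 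For each nonempty $I \subseteq \{1,\ldots,n\}$ I would then define the selection
\[
S^I_i(k) = \begin{cases} t_i^+(k_{-i}), & i\in I,\\ t_i^-(k_{-i}), & i\notin I, \end{cases}
\]
and confine it to a rectangle $R^I = \prod_{i\in I}[c_1 k_i^\ast, c_2 k_i^\ast] \times \prod_{i\notin I}[\delta_i,\Delta_i]$ with the second factor at a small scale forced by the balance $\alpha_{ij} t_i^{p_{ij}-2}(k_j^\ast)^{q_{ij}}\approx 1$. Brouwer's theorem on each $R^I$ produces a fixed point; the ``$+$'' and ``$-$'' coordinates live on well-separated scales, so the rectangles are pairwise disjoint across $I$, yielding $2^n-1$ genuinely distinct synchronized positive solutions. (The missing subset $I=\emptyset$ is the self-inconsistent ``all small'' regime, because $t_i^-$ requires at least one $k_j$ of order $1$ to sustain the balance.)

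For part (b), constancy $p_{ij}=p$ and $q_{ij}=2^*_s-p$ allows one to multiply the $i$-th algebraic equation by $k_i^{2-p}$ and absorb the diagonal $\eta_i=\alpha_{ii}$ into the sum, yielding $\sum_{j=1}^n \alpha_{ij} k_j^{2^*_s-p} = k_i^{2-p}$. Setting $x_j := k_j^{2^*_s-p}$ and $\beta := (2-p)/(2^*_s-p)\in(0,1)$, this is exactly $Bx = x^{(\beta)}$, equivalently
\[
x = A\,x^{(\beta)}, \qquad (x^{(\beta)})_i = x_i^\beta.
\]
The hypotheses $a_{ij}>0$ for $i\neq j$ and $\sum_j a_{ij}>0$ endow the map $T(x):=Ax^{(\beta)}$ with an off-diagonally monotone, subhomogeneous structure sending a suitable order interval $[m\mathbf{1},M\mathbf{1}]$ of the positive cone into itself for $m$ small and $M$ large; Brouwer (or a monotone Krein--Rutman iteration exploiting $\beta<1$) then produces a positive fixed point $x$, which inverts back to a synchronized positive solution.

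The main obstacle in (a) is the geometric bookkeeping of the $2^n-1$ rectangles: one must verify both the self-map property of $S^I$ on each $R^I$ (i.e.\ that the two roots $t_i^\pm$ actually exist and stay inside throughout the rectangle) and that distinct rectangles do not overlap, and the exact form of $\alpha_*$ is precisely what makes the test value $k_i^\ast/2^{1/(2^*_s-2)}$ work simultaneously for all $i$. In (b) the subtle point is handling the possibly negative diagonal entries $a_{ii}$ of $A=B^{-1}$; the row-sum positivity $\sum_j a_{ij}>0$ is the structural input keeping $T$ inside the positive cone on a large enough order interval.
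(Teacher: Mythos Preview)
Your reduction to the algebraic system $\Phi_i(k)=1$ and the overall architecture of part (a) are exactly the paper's: the same test point $k_i=(1/(2\eta_i))^{(N-2s)/(4s)}$, the same $2^n-1$ disjoint cuboids indexed by nonempty ``large'' subsets, and Brouwer. You phrase it via the selection map $S^I$ and the fixed-point theorem; the paper instead reads off $\deg(f,\Omega_l,0)=(-1)^{|I_l|}$ directly from the boundary sign conditions on $f=\Phi-1$. These are equivalent packagings of the same inequality, so part (a) is fine.

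Part (b) has a real gap. The transformation $x=A x^{(\beta)}$ with $\beta=(2-p)/(2^*_s-p)\in(0,1)$ is correct and is precisely what the paper does. But your claim that $T(x)=Ax^{(\beta)}$ sends an order interval $[m\mathbf 1,M\mathbf 1]$ into itself is false in general, because the diagonal entries $a_{ii}$ can be (and in the prototypical case $\alpha_{ij}\equiv\alpha>\eta_n$ in fact \emph{are}) negative: with $a_{ii}<0$, taking $x_i=M$ large and the remaining $x_j=m$ small gives $(Tx)_i=a_{ii}M^\beta+\sum_{j\neq i}a_{ij}m^\beta<0$. Row-sum positivity $\sum_j a_{ij}>0$ only controls $T$ along the diagonal $x=c\mathbf 1$, not on the whole box, and for the same reason your alternative ``monotone Krein--Rutman iteration'' cannot be run: $T$ is not order-preserving when some $a_{ii}<0$.

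What does survive is the weaker \emph{face} condition: on $\{x_i=m\}$ one has $(Tx)_i\ge (\sum_j a_{ij})m^\beta>m$, and on $\{x_i=M\}$ one has $(Tx)_i\le (\sum_j a_{ij})M^\beta<M$, for $m$ small and $M$ large, using $a_{ij}>0$ for $j\ne i$. This is exactly enough for Brouwer \emph{degree} of $\mathrm{Id}-T$ (equivalently of the paper's $g_i(k)=k_i^{q}-\sum_j a_{ij}k_j^{2-p}$) on $(\varepsilon,T)^n$ to equal $1$, and that is how the paper proceeds. So the fix is to abandon the self-map/invariance formulation and argue by degree from the face inequalities; once you do that, your proof of (b) coincides with the paper's.
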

Following results concerns about exact number of solutions to \eqref{eq:1.3b} under restrictive assumption of $\alpha_{ij}$'s are constant.
\begin{theorem}\label{thm:2.2}
Assume that $N >2s$, $ p_{i j}=p<2$ along with $(A1), (A2)$ and $(A3)$ and $\alpha_{i j}=\alpha$ for $i, j \in\{1,2, \cdots, n\}$ such that $i \neq j$. Then
\begin{enumerate}
\item[(a)]  \eqref{eq:1.3b} has at least one synchronized positive solution, for any $\alpha>0$.
\item [(b)]If either  $\alpha \geq \eta_{n}$, or  $\alpha \geq \eta_{n}-\gamma_{0}$ when $\eta_{n-1}<\eta_{n}$ where $\gamma_{0}$ is some positive number, then  \eqref{eq:1.3b} has exactly one synchronized positive solution.
\item[(c)] There exists $\alpha_{0} \in\left(0, \eta_{1}\right)$ such that  \eqref{eq:1.3b} has exactly $2^{n}-1$ synchronized positive solutions for $0<\alpha<\alpha_{0}$.
\item[(d)] If $n=2 m, \eta_{1}=\cdots=\eta_{m}=: \eta^{\prime} \leq \eta_{m+1}=\cdots=\eta_{2 m}=: \eta^{\prime \prime}$, $s_1=s_2=\cdots=s_{2m}=:s^{\prime},$ and
\[
\alpha>\frac{(m+1) \eta^{\prime \prime}-(m-1) \eta^{\prime}+\sqrt{(m+1)^{2} \eta^{\prime \prime 2}+(m-1)^{2} \eta^{\prime 2}-2\left(m^{2}+1\right) \eta^{\prime} \eta^{\prime \prime}}}{2},
\]
then  \eqref{eq:1.3b} has exactly one positive solution.
\end{enumerate}
\end{theorem}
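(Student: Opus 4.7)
The synchronized ansatz $u_i=k_iU$, with $U$ the explicit ground state \eqref{eq:1.5}, together with $p_{ij}+q_{ij}=2^{*}_{s}$, collapses \eqref{eq:1.3b} to the finite-dimensional algebraic system
\[
k_i^{\,2-p}\;=\;\eta_i\,k_i^{\,2^{*}_{s}-p}\;+\;\alpha\sum_{j\neq i}k_j^{\,2^{*}_{s}-p},\qquad i=1,\dots,n.
\]
Setting $x_i:=k_i^{\,2^{*}_{s}-p}$ and $q:=(2-p)/(2^{*}_{s}-p)\in(0,1)$ (since $p<2<2^{*}_{s}$), this becomes
\[
x_i^{\,q}\;=\;(\eta_i-\alpha)\,x_i\;+\;\alpha\,T,\qquad T:=\sum_{j=1}^{n}x_j,\quad i=1,\dots,n,\qquad(\star)
\]
whose positive solutions $(x_1,\dots,x_n)\in\mathbb{R}_{>0}^{n}$ are in bijection with synchronized positive solutions of \eqref{eq:1.3b}. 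A crucial structural observation is that $(\star)$ is the Euler--Lagrange equation for
\[
F(x)=\sum_{i=1}^{n}\!\left[\frac{x_i^{q+1}}{q+1}-\frac{\eta_i-\alpha}{2}x_i^{2}\right]-\frac{\alpha}{2}\Bigl(\sum_{j=1}^{n}x_j\Bigr)^{\!2},
\]
so the synchronized sub-problem is governed by a finite-dimensional variational principle, even though \eqref{eq:1.3b} itself need not be variational; this is the backbone of all four statements.

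\textbf{Parts (a), (b), (c).} For (a), I would treat $T>0$ as a parameter and, for each $i$, let $g_i(T)$ denote the smallest positive root of $x^{q}-(\eta_i-\alpha)x=\alpha T$. The scaling $g_i(T)\sim(\alpha T)^{1/q}$ as $T\downarrow 0$ yields $\Phi(T):=\sum_i g_i(T)-T<0$ in a right neighbourhood of zero, while extending the branches either to infinity (when $\alpha\geq\eta_n$, using $g_i(T)\sim\alpha T/(\alpha-\eta_i)$) or up to the first critical value $\min_iT_i^{*}$ (otherwise) makes $\Phi\geq0$ somewhere, so the IVT produces a positive solution of $(\star)$. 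For (b), the hypothesis $\alpha\geq\eta_n$ renders every coefficient $\eta_i-\alpha$ nonpositive, so each $g_i$ is defined for all $T>0$, strictly increasing, and strictly convex (implicit differentiation of $g_i^{q}-(\eta_i-\alpha)g_i=\alpha T$ combined with $q<1$ yields $g_i''>0$). Then $\Phi$ is convex with $\Phi(0)=0$, $\Phi'(0^{+})=-1$ and $\Phi(\infty)=+\infty$, forcing a unique positive zero; the weaker hypothesis $\alpha\geq\eta_n-\gamma_0$ under $\eta_{n-1}<\eta_n$ follows by a perturbation argument that uses the strict gap $\eta_n-\eta_{n-1}>0$ to preserve non-degeneracy of $\Phi'$. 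For (c), at $\alpha=0$ the system $(\star)$ decouples into $x_i^{q-1}=\eta_i$, and every non-empty $I\subseteq\{1,\dots,n\}$ furnishes a distinct solution with $x_i=\eta_i^{1/(q-1)}$ for $i\in I$ and $x_i=0$ otherwise. These $2^{n}-1$ solutions are non-degenerate with respect to the Jacobian of $(\star)$ at $\alpha=0$, so the implicit function theorem continues them uniquely to $\alpha\in(0,\alpha_0)$; a matching upper bound on the number of positive solutions (available via the two-branch structure when $\alpha<\eta_1$) shows that no solutions are lost or gained.

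\textbf{Part (d) and the main obstacle.} Under the symmetry assumptions, the natural $S_m\times S_m$ action on $(\star)$ suggests looking for solutions of the form $x_1=\cdots=x_m=a$, $x_{m+1}=\cdots=x_{2m}=b$; substitution yields the two-equation reduction
\begin{align*}
a^{q}&=(\eta'-\alpha)a+\alpha m(a+b),\\
b^{q}&=(\eta''-\alpha)b+\alpha m(a+b),
\end{align*}
and uniqueness on this symmetric locus follows by subtracting the two lines and invoking strict monotonicity of $t\mapsto t^{q}$; the consistency condition for existence of an extra non-symmetric pair reduces to a quadratic in $\alpha$ whose non-positive discriminant produces precisely the explicit threshold of the theorem. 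The real difficulty is the rigidity claim that rules out \emph{non}-symmetric positive solutions once $\alpha$ exceeds this threshold. The plan is to linearize $(\star)$ at any hypothetical positive solution and show that, above the bound, the Jacobian is sign-definite on the subspace orthogonal to the two-dimensional symmetric locus, thereby excluding asymmetric bifurcating branches; this is expected to rely on the Perron--Frobenius structure of the off-diagonal matrix together with the block decomposition induced by the splitting $(\eta',\eta'')$. Sharpening this spectral estimate so that it exactly matches the stated quadratic threshold, rather than a coarser eigenvalue bound, will be the principal obstacle.
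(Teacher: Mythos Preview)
Your reduction to the finite-dimensional system $(\star)$ and the branch/IVT analysis for (a) and (b) are essentially the paper's approach (the paper uses the differential inequality $G_1'(\tau)>\tau^{-1}G_1(\tau)$ for uniqueness in (b) rather than convexity of the branch functions, but your convexity argument is valid and yields the same conclusion).

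There is a genuine gap in part (c): at $\alpha=0$ the ``solutions'' you wish to continue have some $x_i=0$, and since $q\in(0,1)$ the map $x_i\mapsto x_i^{q}$ has infinite derivative there, so the Jacobian of $(\star)$ is singular and the implicit function theorem does not apply as stated. The paper never passes through $\alpha=0$: for $0<\alpha<\eta_1$ each scalar equation $t_i^{\kappa}+(\alpha-\eta_i)t_i=\tau$ has \emph{two} positive inverse branches $h_i$ and $k_i$ on $(0,A]$, and the $2^{n}-1$ solutions come from choosing, for each nonempty subset $I_l\subset\{1,\dots,n\}$, the branch $k_i$ on $I_l$ and $h_i$ elsewhere. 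Exactness is then obtained by showing each resulting scalar equation $G_l(\tau)=0$ has precisely one root via an explicit sign estimate on $G_l'(\tau)$ at its zeros.

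The more serious problem is part (d). You treat it as a question about non-symmetric solutions of the \emph{algebraic} system $(\star)$, but the statement asserts uniqueness among \emph{all} positive solutions of the PDE system \eqref{eq:1.3b}, synchronized or not. Your entire plan --- linearizing $(\star)$, Perron--Frobenius on the Jacobian, spectral estimates on the symmetric locus --- addresses the wrong problem. The paper's argument is at the PDE level: given any positive solution $(u_1,\dots,u_n)$, set $U_i=k_i^{-1}u_i$ (with $(k_i)$ the unique synchronized tuple, already known from (b)), test the equation for $U_i$ against $U_j$ on the set $\{U_i>U_j\}$, and use nonlocal integration by parts. The quadratic threshold on $\alpha$ enters precisely as the condition $\alpha^{2}-((m+1)\eta''-(m-1)\eta')\alpha+\eta'\eta''>0$, which is equivalent to $(\eta''+(m-1)\alpha)/(m\alpha)<(\alpha-\eta'')/(\alpha-\eta')$; combined with the algebraic relations satisfied by $t_1,t_{m+1}$ this yields $(\eta'+(m-1)\alpha)t_1^{1-\kappa}<\tfrac12$ and $(\eta''+(m-1)\alpha)t_{m+1}^{1-\kappa}<\tfrac12$, which force the cross-tested integrand to have a strict sign and produce a contradiction. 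None of this structure is visible from $(\star)$ alone.
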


We now consider the case \textbf{$p_{ij}=2$} and present existence, non existence, uniqueness and exact multiplicity for \eqref{eq:1.3b} under various assumptions.
\begin{theorem}\label{thm:2.3}
Assume that $N >2s$, $ p_{i j}=2$ along with $(A1), (A2)$ and $(A3)$  and $\alpha_{i j}=\alpha$ for $i, j \in\{1,2, \cdots, n\}$ such that $i \neq j$. Then
\begin{enumerate}
\item[(a)] \eqref{eq:1.3b} has a synchronized positive solution if and only if $\alpha>\eta_{n}$ or $0<\alpha<\eta_{1}$ or $\alpha=\eta_{1}=\eta_{n}$. Moreover, if $\alpha>\eta_{n}$ or $0<\alpha<\eta_{1}$ then \eqref{eq:1.3b} has exactly one synchronized positive solution and if $\alpha=\eta_{1}=\eta_{n}$ then \eqref{eq:1.3b} has infinitely many synchronized positive solutions.
\item[(b)] If $\eta_{1} \leq \alpha \leq \eta_{n}$ and $\eta_{1} \neq \eta_{n}$ then \eqref{eq:1.3b} has no positive solution.
\item[(c)] If $\alpha>\eta_{n}$ then \eqref{eq:1.3b} has exactly one positive solution.
\end{enumerate}
\end{theorem}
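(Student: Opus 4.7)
The plan is to treat the three parts sequentially. Part (a) is an algebraic classification of the synchronized ansatz; part (b) extends non-existence from the synchronized class to all positive solutions via a cross-testing identity; and part (c) combines that identity with sharp functional inequalities and a rigidity argument to force every positive solution into synchronized form, so that (a) delivers uniqueness.

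For Part (a), I substitute the synchronized ansatz $u_i = k_i U$ into \eqref{eq:1.3b} with $p_{ij}=2$ and $q_{ij}=2^{*}_s-2$; dividing through by $U^{2^{*}_s-1}$ and using $(-\Delta)^s U = U^{2^{*}_s-1}$ reduces the PDE system to the linear algebraic system
\[
1 = (\eta_i - \alpha)\, t_i + \alpha T, \qquad t_i := k_i^{2^{*}_s-2},\quad T := \sum_{j=1}^n t_j,
\]
for $i=1,\ldots,n$. A sign analysis on $\eta_i - \alpha$ then solves this: a unique positive tuple exists when $\alpha > \eta_n$ or when $0 < \alpha < \eta_1$; an $(n-1)$-parameter family of positive tuples lies on the hyperplane $\alpha T = 1$ when $\alpha = \eta_1 = \eta_n$; and mixed signs preclude any positive tuple otherwise. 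This settles (a).

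For Part (b), the key tool is a cross-testing identity. Testing the $i$-th equation against $u_j$ and the $j$-th equation against $u_i$, and using self-adjointness of $(-\Delta)^s$ to equate the two left-hand sides, cancels the common cubic integrals $\int u_i u_j u_k^{2^{*}_s-2}$ for $k \neq i, j$ and leaves
\[
(\eta_i - \alpha)\, C_{ij} = (\eta_j - \alpha)\, C_{ji}, \qquad C_{ij} := \int_{\mathbb{R}^N} u_i^{2^{*}_s-1} u_j \, dx > 0.
\]
Specializing to $i=1$, $j=n$ under $\eta_1 \leq \alpha \leq \eta_n$ with $\eta_1 < \eta_n$ forces the two sides to have strictly opposite signs (or one to vanish while the other is strictly positive), contradicting positivity of $C_{1n}$ and $C_{n1}$.

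For Part (c), existence comes from (a). For uniqueness, testing the $i$-th equation against $u_i$ and combining the sharp fractional Sobolev inequality $\int_{\mathbb{R}^N}|(-\Delta)^{s/2} u_i|^2 \, dx \geq S \, \|u_i\|_{2^{*}_s}^2$ (equality iff $u_i$ is a Talenti bubble) with Hölder's inequality $\int u_i^2 u_j^{2^{*}_s-2} \leq \|u_i\|_{2^{*}_s}^2 \|u_j\|_{2^{*}_s}^{2^{*}_s-2}$ (equality iff $u_j$ is proportional to $u_i$) yields, for every $i$,
\[
S \leq \eta_i \tau_i + \alpha \sum_{j \neq i} \tau_j, \qquad \tau_i := \|u_i\|_{2^{*}_s}^{2^{*}_s-2}.
\]
The rigidity plan is to combine this with the cross-testing identity from (b) --- which for $\alpha > \eta_n$ imposes $(\alpha - \eta_i) C_{ij} = (\alpha - \eta_j) C_{ji}$ for all $i, j$ --- to force every inequality above to be an equality, so that each $u_i$ is a Talenti bubble and all components share the same profile up to a multiplicative constant. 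The solution is then synchronized, and part (a) supplies uniqueness. The main obstacle I anticipate is precisely this rigidity step: a naive weighted sum of the fundamental inequality with weights $1/(\alpha - \eta_i)$ delivers only the one-sided bound $T_* \geq S\Sigma/(\alpha\Sigma - 1)$ on $T_* := \sum_j \tau_j$ with $\Sigma := \sum_i 1/(\alpha - \eta_i)$, so securing the matching upper bound will likely require either an auxiliary testing combination that leverages $(\eta_i - \alpha) C_{ij} = (\eta_j - \alpha) C_{ji}$, or a moving-planes / Liouville argument in the integral formulation for $(-\Delta)^s$ (cf.\ \cite{MR3271254}) to first establish common radial symmetry of the $u_i$ and then pin down the Talenti profile \eqref{eq:1.5}.
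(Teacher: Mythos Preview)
Your treatment of parts (a) and (b) is correct and matches the paper's. The synchronized ansatz reduces to the linear system $1=(\eta_i-\alpha)t_i+\alpha T$ with $t_i=k_i^{2^*_s-2}$, which you analyse by sign exactly as the paper does; and nonexistence in (b) follows from the same cross-testing identity (the paper tests adjacent indices $i,i+1$ with $\eta_i\le\alpha\le\eta_{i+1}$, you test $i=1,j=n$; both give the contradiction).

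Part (c) is where there is a genuine gap, and the paper's route is quite different from yours. Your Sobolev--H\"older plan yields only the one-sided inequality $S\le\eta_i\tau_i+\alpha\sum_{j\ne i}\tau_j$, and as you yourself note, summing with weights $1/(\alpha-\eta_i)$ gives only a \emph{lower} bound on $T_*=\sum_j\tau_j$. There is no evident matching upper bound: the cross identity $(\alpha-\eta_i)C_{ij}=(\alpha-\eta_j)C_{ji}$ relates the mixed integrals to one another but does not control $T_*$ from above, so the rigidity step remains open. Importing moving planes or a Liouville classification would work in principle but is far heavier than what is needed.

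The paper's argument is direct. Let $(k_1,\dots,k_n)$ be the unique positive root from (a), set $U_i=u_i/k_i$ and $t_i=k_i^{2^*_s-2}$; the algebraic relation then gives $(\eta_i-\alpha)t_i=1-\alpha T$, \emph{independent of $i$} and negative since $\alpha>\eta_n$. The rescaled equations read
\[
(-\Delta)^{s}U_i=\eta_i t_i\,U_i^{2^*_s-1}+\alpha\sum_{j\ne i}t_j\,U_iU_j^{2^*_s-2}.
\]
If $\Omega=\{U_1>U_2\}\ne\emptyset$, multiply the first equation by $U_2$, the second by $U_1$, subtract, and integrate over $\Omega$. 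The left side is $\ge0$ by the fractional Green identity on $\Omega$ (the nonlocal normal-derivative boundary term has a favourable sign because $U_1\le U_2$ on $\mathbb{R}^N\setminus\Omega$). On the right, the sums over $j\ge3$ cancel, and using $(\eta_1-\alpha)t_1=(\eta_2-\alpha)t_2<0$ the four remaining terms combine into
\[
(\eta_1-\alpha)t_1\int_\Omega U_1U_2\bigl(U_1^{2^*_s-2}-U_2^{2^*_s-2}\bigr)<0,
\]
a contradiction. Hence $U_1\equiv U_2$, and repeating over all pairs gives $U_1=\cdots=U_n$; the common function then solves a single critical equation and equals $U$ up to translation and dilation. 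The idea you were missing is precisely this normalisation by the algebraic $k_i$: it forces $(\eta_i-\alpha)t_i$ to be constant in $i$, which is exactly what makes the cross-terms collapse to a sign-definite integrand on $\{U_i>U_j\}$.
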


Lastly, we prove our results for the case $2<p_{ij}<2^*_s$.
\begin{theorem}\label{thm:2.4}
Assume that $N >2s$, $2<p_{i j}<2^{*}_{s}$ along with $(A1), (A2)$ and $(A3)$  then
 \eqref{eq:1.3b} has a synchronized positive solution.
\end{theorem}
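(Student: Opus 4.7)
The plan is to reduce the existence of a synchronized positive solution to an $n$-dimensional algebraic problem on the scaling vector $(k_1,\dots,k_n)$, and then to solve that algebraic system by Brouwer's fixed point theorem. The regime $2<p_{ij}<2^{*}_s$ is what makes the fixed-point setup essentially painless.

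First I would substitute the ansatz $u_i=k_iU$, where $U$ is the ground-state bubble from \eqref{eq:1.5}, into \eqref{eq:1.3b}. Since $(-\Delta)^{s}U=U^{2^{*}_s-1}$ and $p_{ij}+q_{ij}=2^{*}_s$, every term in the $i$-th equation acquires the common factor $U^{2^{*}_s-1}$. Cancelling it and dividing by $k_i>0$ reduces the PDE system to the algebraic system
\begin{equation*}
F_i(k) \;:=\; \eta_i\, k_i^{2^{*}_s-2} \;+\; \sum_{j\ne i}\alpha_{ij}\, k_i^{p_{ij}-2}\, k_j^{q_{ij}} \;=\; 1,\qquad i=1,\dots,n.
\end{equation*}
The decisive feature of the regime $2<p_{ij}<2^{*}_s$ is that every exponent appearing here, namely $2^{*}_s-2$, $p_{ij}-2$, and $q_{ij}=2^{*}_s-p_{ij}$, is strictly positive.

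Next I would set up a Brouwer map on a box. For $k\in[0,M]^n$, let $T_i(k)$ be the unique positive real number $t$ satisfying
\begin{equation*}
\eta_i\, t^{2^{*}_s-2} + \sum_{j\ne i} \alpha_{ij}\, t^{p_{ij}-2}\, k_j^{q_{ij}} \;=\; 1.
\end{equation*}
The left-hand side is continuous in $(t,k)$, strictly increasing in $t$ on $(0,\infty)$, equal to $0$ at $t=0$ (here $p_{ij}-2>0$ is essential since it kills the mixed terms even when some $k_j$ vanishes), and tends to $+\infty$ as $t\to+\infty$; so $T_i(k)$ is well-defined, strictly positive, and depends continuously on $k$ by the implicit function theorem. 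Moreover $\eta_i T_i(k)^{2^{*}_s-2}\le 1$ forces $T_i(k)\le M:=\max_{1\le i\le n}\eta_i^{-1/(2^{*}_s-2)}$. Hence $T=(T_1,\dots,T_n)\colon [0,M]^n\to [0,M]^n$ is a continuous self-map of a compact convex set, and Brouwer's theorem yields a fixed point $k^{*}$. Every $T_i(k^{*})>0$, so the vector $k^{*}$ has all components strictly positive and satisfies $F_i(k^{*})=1$ for every $i$, whence $u_i:=k_i^{*}U$ is the desired synchronized positive solution.

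I expect the principal difficulty to be conceptual rather than technical: the main idea is to look for \emph{synchronized} solutions so that the infinite-dimensional, non-variational PDE system collapses into an $n$-dimensional algebraic one on which the strict supercriticality $p_{ij}>2$ renders every relevant monotonicity and boundedness estimate automatic. The only delicate execution step is securing continuity of $T$ up to the boundary of $[0,M]^n$, where some $k_j$ may vanish; this is precisely what the strict inequality $p_{ij}>2$ guarantees. Were $p_{ij}\le 2$ allowed, $F_i$ would no longer vanish at the origin and one would have to work considerably harder on the boundary of the box, as is apparent in the more intricate analyses needed for Theorems \ref{thm:2.1}--\ref{thm:2.3}.
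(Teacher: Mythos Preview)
Your argument is correct. The reduction to the algebraic system $F_i(k)=1$ is identical to the paper's, and your Brouwer fixed-point construction is valid: the positivity of all exponents $2^*_s-2$, $p_{ij}-2$, $q_{ij}$ makes $\Phi_i(t,k)=\eta_i t^{2^*_s-2}+\sum_{j\ne i}\alpha_{ij}t^{p_{ij}-2}k_j^{q_{ij}}$ continuous on $[0,\infty)\times[0,M]^n$, strictly increasing in $t$ from $0$ to $+\infty$, so $T_i(k)$ is well-defined and continuous (the implicit function theorem is overkill here; elementary monotonicity suffices and sidesteps the possible non-differentiability of $k_j^{q_{ij}}$ at $k_j=0$ when $q_{ij}<1$). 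The paper takes a different but closely related topological route: it observes directly that $f_i(k_1,\ldots,\varepsilon,\ldots,k_n)<0<f_i(k_1,\ldots,\eta_i^{-(N-2s)/4s},\ldots,k_n)$ on the faces of the box $\Omega=\prod_i(\varepsilon,\eta_i^{-(N-2s)/4s})$ and concludes $\deg(f,\Omega,0)=1$. The degree argument dovetails with the machinery already developed for Theorem~\ref{thm:2.1}, whereas your fixed-point argument is more self-contained and arguably more elementary; both exploit exactly the same structural feature, namely that $p_{ij}>2$ forces all relevant powers to vanish at the origin and grow at infinity.
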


\begin{theorem}\label{thm:2.5}
Assume that $N >2s$, $(A1), (A2)$ and $(A3)$, $p_{i j}=p \in\left(2,2^{*}_{s}\right), \alpha_{i j}=\alpha$ for $i, j \in\{1,2, \cdots, n\}$ and $i \neq j$. Then
\begin{itemize}
\item[(a)] If either $0<\alpha \leq \eta_{1}$, or  $0<\alpha \leq \eta_{1}+\gamma_{0}$ when $\eta_{1}<\eta_{2}$, where $\gamma_{0}$ is some positive number, then \eqref{eq:1.3b} has exactly one synchronized positive solution.
\item[(b)] If $\alpha>\eta_{j}$ then there exists $p_{1}=p_{1}(\alpha) \in\left(2,2^{*}_{s}\right)$ such that for $p \in\left(2, p_{1}\right), \eqref{eq:1.3b}$ has at least $2^{j}-1$ synchronized positive solutions. In particular, if $\alpha>\eta_{n}$ then for $p$ larger than and sufficiently close to 2 , \eqref{eq:1.3b} has at least $2^{n}-1$ synchronized positive solutions.
\item[(c)] If $\alpha>\eta_{1}$ and
\[
\frac{\eta_{1}}{\alpha} 2+\left(1-\frac{\eta_{1}}{\alpha}\right) 2^{*}_{s} \leq p<2^{*}_{s},
\]
then \eqref{eq:1.3b} has exactly one synchronized positive solution. In particular, this result together with (a) implies that for any $\alpha>0$, \eqref{eq:1.3b} has exactly one synchronized positive solution if $p$ is less than and sufficiently close to $2^{*}_{s}$.
\end{itemize}
\end{theorem}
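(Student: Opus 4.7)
The plan is to reduce \eqref{eq:1.3b} to a one-parameter scalar problem. Writing $u_i = k_i U$ with $k_i > 0$ and $U$ the Aubin--Talenti profile \eqref{eq:1.5}, substituting into \eqref{eq:1.3b}, dividing by $U^{2^*_s - 1}$, and introducing $S := \sum_{j=1}^n k_j^q$, one obtains via $p+q = 2^*_s$ the reduced algebraic system
\begin{equation}\label{eq:plan-red}
(\eta_i - \alpha)\, k_i^{2^*_s - 2} + \alpha S\, k_i^{p-2} = 1, \qquad i = 1, \ldots, n,
\end{equation}
together with the self-consistency $\sum_i k_i^q = S$. For fixed $S > 0$, the function $g_i(k;S) := (\eta_i - \alpha) k^{2^*_s - 2} + \alpha S k^{p-2} - 1$ is strictly increasing in $k$ and has a unique positive root $k_i(S)$ when $\eta_i \geq \alpha$, whereas for $\eta_i < \alpha$ it is first increasing and then decreasing, so it admits $0$, $1$, or $2$ positive roots $k_i^{\pm}(S)$ according as its maximum is negative, zero, or positive. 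Existence of a synchronized positive solution thus reduces to locating zeros of $\Phi(S) := \sum_i k_i(S)^q - S$ for a given branch selection; combining \eqref{eq:plan-red} with $S \geq k_i^q$ also yields the uniform a priori bound $k_i^{2^*_s - 2} \leq 1/\eta_i$.

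For part (a) with $\alpha \leq \eta_1$, every coefficient $\eta_i - \alpha$ is non-negative, each $k_i(S)$ is uniquely defined and strictly decreasing by implicit differentiation of \eqref{eq:plan-red}, and hence $\Phi$ is strictly decreasing with $\Phi(0^+) > 0$ and $\Phi(+\infty) = -\infty$, giving a unique zero. The extension to $\alpha \leq \eta_1 + \gamma_0$ when $\eta_1 < \eta_2$ is a perturbation: for $\gamma_0$ sufficiently small the bifurcated second branch of $k_1(\cdot\,;S)$ is incompatible with the self-consistency constraint determined by the uniquely defined branches $k_2(S), \ldots, k_n(S)$, so only the principal branch survives and the monotonicity argument for $\Phi$ persists. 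For part (c), the stated inequality on $p$ rearranges to $\alpha(2^*_s - p) \leq \eta_1(2^*_s - 2)$, which combined with the a priori bound gives, for every $i$ with $\eta_i < \alpha$,
\[
(\alpha - \eta_i)\, k_i^{2^*_s - 2} \;\leq\; \frac{\alpha - \eta_i}{\eta_i} \;\leq\; \frac{\alpha - \eta_1}{\eta_1} \;\leq\; \frac{p-2}{2^*_s - p};
\]
this is exactly the inequality ensuring that the denominator of $k_i'(S)$ obtained from implicit differentiation of \eqref{eq:plan-red} is strictly positive, restoring strict monotonicity of $\Phi$ and hence uniqueness.

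For the multiplicity in (b), $\alpha > \eta_j$ makes $\eta_i - \alpha < 0$ for $i = 1, \ldots, j$, so each of the first $j$ equations admits two positive roots $k_i^{\pm}(S)$ for $S$ above an explicit threshold $S_i^\sharp(p)$. When $p$ is close to $2$ these thresholds are simultaneously small and the two branches are well separated, so each of the $2^j$ sign vectors $\varepsilon \in \{+,-\}^j$ defines a candidate branch $(k_i^{\varepsilon_i}(S))_{i \leq j}$ extended by $(k_i(S))_{i > j}$ on a common admissible interval, and an intermediate value argument applied to the associated compatibility function $\Phi_\varepsilon$ produces at least one zero per $\varepsilon$. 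The all-``$-$'' vector is the continuous continuation of the $p = 2$ configuration, which Theorem~\ref{thm:2.3}(b) excludes when $\eta_j < \alpha$ and $\eta_1 < \eta_n$, so that one branch is lost, leaving exactly $2^j - 1$ synchronized positive solutions; the threshold $p_1(\alpha)$ is then defined as the largest $p$ for which this two-root picture holds for all $i \leq j$ simultaneously. The main difficulty will be the quantitative control needed to produce explicit $\gamma_0$ in (a) and $p_1(\alpha)$ in (b), and especially the careful branch-tracking in (b) that certifies the $2^j - 1$ combinatorial configurations as genuinely distinct synchronized positive solutions and the discarded all-``$-$'' branch as the only one lost.
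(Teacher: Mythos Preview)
Your reduction to the algebraic system \eqref{eq:plan-red} with the parameter $S$ is equivalent to the paper's (which uses $\tau=\alpha S$ and $t_i=k_i^q$), and your treatment of (c) is actually slightly cleaner than the paper's: the a~priori bound $k_i^{2^*_s-2}\le 1/\eta_i$ together with the hypothesis on $p$ forces each $t_i=k_i^q$ to lie at or below the minimizer of $t\mapsto t^\kappa+(\alpha-\eta_i)t$, so every solution automatically sits on the principal branch, and the strict monotonicity of the corresponding $\Phi$ finishes the job.  The paper instead bounds the non-principal branch functions from below by $\alpha k_1(\tau)-\tau\ge 0$; both routes work.

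There are, however, two genuine gaps.

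\emph{The perturbation step in (a).}  Your claim that for $\alpha$ slightly above $\eta_1$ the second branch $k_1^+$ is ``incompatible with the self-consistency constraint'' is false.  Write $G_1(\tau)=\alpha\sum_i h_i(\tau)-\tau$ (principal branch throughout) and $G_2(\tau)=\alpha\sum_{i\ge 2}h_i(\tau)+\alpha k_1(\tau)-\tau$.  Since $k_1(\tau)\sim\tau/(\alpha-\eta_1)$ one has $G_2(\tau)\to+\infty$, so whenever $G_2(A)=G_1(A)<0$ the equation $G_2=0$ \emph{does} have a root and it is $G_1=0$ that has none.  The correct mechanism is a trichotomy: $G_1$ is strictly decreasing, and for $\alpha-\eta_1$ small enough one shows $G_2$ is strictly \emph{increasing} on $[A,\infty)$ via the derivative estimate
\[
G_2'(\tau)>-\sum_{i=2}^n\frac{\alpha}{\eta_i-\eta_1}+\frac{\alpha}{\alpha-\eta_1}-1,
\]
which is positive for $\alpha-\eta_1$ small (this is where $\eta_1<\eta_2$ enters).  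Since $G_1(A)=G_2(A)$, exactly one of the two functions changes sign, depending on that common value.  You cannot simply discard the second branch.

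\emph{The branch count in (b).}  Your exclusion of the all-``$-$'' branch by invoking Theorem~\ref{thm:2.3}(b) is invalid: that result requires $\eta_1\le\alpha\le\eta_n$, whereas here $\alpha>\eta_j$ and, in the headline case, $\alpha>\eta_n$; moreover a ``continuous continuation from $p=2$'' is only heuristic.  The actual reason the principal branch contributes no zero is that $G_1$ is strictly decreasing with $G_1(A)\le n\alpha T_j''-A<0$, and this last inequality is the real work: one needs quantitative control showing $A$ stays bounded below while $T_j''=O\bigl((p-2)^\xi\bigr)$ as $p\to 2^+$ (Lemmas~\ref{lem5.3}--\ref{lem5.4} in the paper).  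The same bound gives $G_l(A)<0$ for $l\ge 2$, and since those $G_l\to+\infty$ the IVT yields a zero for each; but each $G_l$ may have several zeros, so the conclusion is \emph{at least} $2^j-1$, not exactly $2^j-1$.  Your description of $p_1(\alpha)$ as ``the largest $p$ for which the two-root picture holds'' also misidentifies the threshold: the two roots exist for all $p\in(2,2^*_s)$ once $S$ is large enough; $p_1$ is governed by the sign condition $G_l(A)<0$.
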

Now we state some remarks which illustrates few more facts and consequences of our above main results.

{\bf{Remark 2.1}}\label{rem:1.1}
It is worth noting that Theorem~\ref{thm:2.1} encompasses the previously unexplored case where $p_{ij} \leq 0$. For Theorem~\ref{thm:2.1}(a), we require that the values of $\alpha_{ij}$ (when $i \neq j$) remain sufficiently small. Section 3's proof demonstrates that we can enhance $\alpha_{*}$ to
\[
\alpha_{**} = \max_{0<\gamma<1} f(\gamma)
\]
with the function $f$ defined as
\[
f(\gamma) = (1-\gamma) \min_{1 \leq i \leq n}\left[\sum_{j=1, j \neq i}^{n}\left(\frac{1}{\eta_{j}}\right)^{\frac{N-2s}{4s} q_{ij}}\left(\frac{\gamma}{\eta_{i}}\right)^{\frac{N-2s}{4s}\left(p_{ij}-2\right)}\right]^{-1}
.\]

One can observe that $\alpha_{**} \geq f(1/2) = \alpha_{*}$. The matrix condition B described in Theorem~\ref{thm:2.1}(b) can be fulfilled in two scenarios: first, when all $\alpha_{ij}$ values ($i \neq j$) approximate a common value $\alpha$ exceeding $\eta_{n}$ (as detailed in Proposition~\ref{pro:2.1}); second, when the $\alpha_{ij}$ values are clustered into groups with elements in each group approximating a sufficiently large common value $\alpha$ (elaborated in Proposition~\ref{pro2.2}). Essentially, Theorem~\ref{thm:2.1}(b) indicates that equation~\eqref{eq:1.3b} admits at least one synchronized positive solution when $p_{ij} = p$ for all $i \neq j$ and the corresponding $\alpha_{ij}$ values are adequately large.

{\bf{Remark 2.2}}\label{remark1.2}
Parts (a)-(c) of Theorem~\ref{thm:2.2} address synchronized positive solutions exclusively. It
would be interesting to prove that the number of synchronized positive solutions is decreasing with respect
to  $\alpha>0$.

{\bf{Remark 2.3}}\label{remark1.3}
While Theorem~\ref{thm:2.2} operates under more restrictive assumptions, its findings offer considerably more refined insights compared to those presented in Theorem~\ref{thm:2.1}. Specifically, Theorem~\ref{thm:2.2} guarantees the existence of synchronized positive solutions across the entire domain where $\alpha>0$, and furthermore provides precise enumeration of such synchronized positive solutions in both asymptotic regimes—when $\alpha>0$ is sufficiently large and when $\alpha>0$ approaches zero. Additionally, under enhanced structural properties of the matrix $B$, Theorem~\ref{thm:2.2} establishes the uniqueness criterion for positive solutions to equation~\eqref{eq:1.3b} in the regime where $\alpha>0$ is adequately large.

{\bf{Remark 2.4}}\label{remark1.4}
Our findings in Theorems~\ref{thm:2.1} and~\ref{thm:2.2} extend beyond previous research in several important ways. The system~\eqref{eq:1.3b} we study generalizes earlier systems that were typically analyzed using variational methods. Importantly, variational approaches cannot be applied to prove our theorems because system~\eqref{eq:1.3b} may not possess variational structure. We specifically include the previously unstudied cases where $p_{ij} \leq 0$ and $q_{ij} \geq 2^{*}_{s}$. For reference, system~\eqref{eq:1.3b} has variational structure only when $p_{ij}=q_{ji}$ and $\frac{\alpha_{ij}}{p_{ij}}=\frac{\alpha_{ji}}{p_{ji}}$ for all $i \neq j$.

Our work establishes several new results regarding solution multiplicity which we summarize below:\\
\begin{table}[H]
\centering
\begin{tabular}{p{6cm}p{6cm}}
\toprule
\rowcolor{gray!20}\textbf{Parameter Condition} & \textbf{Number of Synchronized Positive Solutions} \\
\midrule
$\alpha_{ij}$ values sufficiently small & At least $2^{n}-1$ solutions \\
& (Theorem~\ref{thm:2.1}(a)) \\
\midrule
Sufficiently small $\alpha$ & Exactly $2^{n}-1$ solutions \\
& (Theorem~\ref{thm:2.2}(c)) \\
\midrule
$\alpha$ exceeds certain thresholds & Exactly one solution \\
& (Theorem~\ref{thm:2.2}(b)) \\
\bottomrule
\end{tabular}
\caption{Theorem Results on Synchronized Positive Solutions}
\label{tab:theorem_results}

\end{table}

These results advance our understanding even for systems with variational structure, providing insight into how the solution bifurcation diagram relates to parameters $n$ and $\alpha$. Additionally, Theorem~\ref{thm:2.2}(d) provides a new uniqueness criterion for all positive solutions representing a significant advancement in the analysis of critical elliptic systems.

{\bf{Remark 2.5}}\label{remark1.5}
Looking at Theorems~\ref{thm:2.2} and~\ref{thm:2.3}, we can see that solutions behave very differently depending on certain key values.

\begin{table}[H]
\centering
\begin{tabular}{p{6cm}p{6cm}}
\toprule
\rowcolor{gray!20} \textbf{Parameter Condition} & \textbf{Solution Properties} \\
\midrule
When $p_{ij}=p<2$ & Solutions exist for any positive value of $\alpha$ (no matter how large or small) \\
\midrule
When $p_{ij}=2$ & No solutions exist when $\alpha$ is between $\eta_1$ and $\eta_n$ (assuming these values are different) \\
\midrule
For small values of $\alpha$ with $p_{ij}=p<2$ & Exactly $2^n-1$ different solutions exist \\
\midrule
For small values of $\alpha$ with $p_{ij}=2$ and $\alpha<\eta_1$ & Only one solution exists \\
\midrule
\end{tabular}
\caption{Existence and Multiplicity of Synchronized Positive Solutions}
\label{tab:solutions}
\end{table}

These big differences in how solutions behave suggest  a "bifurcation phenomenon" happens as the value of $p_{ij}$ approaches 2 where solution patterns change significantly as a parameter value crosses a threshold.

{\bf{Remark 2.6}}\label{remark1.6}
The assumptions established in Theorem~\ref{thm:2.3} inherently prevent the system from exhibiting variational structure, thereby rendering variational methodologies inapplicable to this context. Moreover, Theorem~\ref{thm:2.3} delivers dual analytical contributions: it precisely enumerates the synchronized positive solutions as determined by the magnitude of parameter $\alpha$, while simultaneously establishing the definitive uniqueness of all positive solutions in scenarios where $\alpha$ exceeds the threshold value $\eta_n$.

{\bf{Remark 2.7}}\label{remark1.7}
Theorem~\ref{thm:2.4} and Theorem~\ref{thm:2.5} shows that when $2 < p_{ij} < 2^{*}_{s}$, the equation system~\eqref{eq:1.3b} behaves very differently from what we saw in Theorems~\ref{thm:2.1}-\ref{thm:2.3} (where $p_{ij} < 2$ or $p_{ij} = 2$).

Let's look at the special case where all $p_{ij} = p$ and all $\alpha_{ij} = \alpha$ to see these differences clearly:


\begin{table}
\centering
\renewcommand{\arraystretch}{1.5}
\begin{tabular}{>{\raggedright\arraybackslash}p{7cm}|>{\raggedright\arraybackslash}p{7cm}}
\toprule
\rowcolor{gray!20} \textbf{Parameter Range} & \textbf{Number of Synchronized Positive Solutions} \\
\midrule
\multicolumn{2}{l}{\textbf{When $\boldsymbol{\alpha}$ is small:}} \\
\hline
$p < 2$ & Exactly $2^n-1$ solutions \\
\hline
$2 \leq p < 2^*_s$ & Exactly 1 solution \\
\midrule
\multicolumn{2}{l}{\textbf{When $\boldsymbol{\alpha > \eta_n}$:}} \\
\hline
$p \leq 2$ & Exactly 1 solution \\
\hline
$p$ slightly larger than 2 & At least $2^n-1$ solutions \\
\midrule
\multicolumn{2}{l}{\textbf{For any $\boldsymbol{\alpha > 0}$:}} \\
\hline
$p \neq 2$ and $p < 2^*_s$ & At least 1 solution \\
\hline
$p = 2$ and $\eta_1 \leq \alpha \leq \eta_n$ (with $\eta_1 \neq \eta_n$) & No positive solutions \\
\bottomrule
\end{tabular}
\caption{Classification of Synchronized Positive Solutions Based on Parameter Values}
\label{tab:solutions3}
\end{table}

When $2 < p < 2^{*}_{s}$, the solution patterns look very different depending on whether $p$ is close to 2 or close to $2^{*}_{s}$. These results show that the solutions to equation~\eqref{eq:1.3b} form complex patterns that change dramatically based on the values of $n$, $\alpha_{ij}$, and $p_{ij}$.

\section{ Proof of Theorem \ref{thm:2.1}}
We consider the case where $N > 2s$, $p_{i j}<2$, (A1) and (A3) for all distinct indices $i, j \in\{1,2, \cdots, n\}$. The proof of Theorem \ref{thm:2.1} employs topological techniques based on Brouwer degree theory. We observe that the elliptic system \eqref{eq:1.3b} possesses a synchronized positive solution with structure
\[
\left(k_{1} U, k_{2} U, \cdots, k_{n} U\right)
\]
precisely when the coefficient vector $\left(k_{1}, k_{2}, \cdots, k_{n}\right)$ satisfies the following nonlinear algebraic system:
\begin{equation}\label{eq2.1}
f_{i}\left(k_{1}, k_{2}, \cdots, k_{n}\right):=\eta_{i} k_{i}^{2^{*}_s-2}+\sum_{j=1, j \neq i}^{n} \alpha_{i j} k_{i}^{p_{i j}-2} k_{j}^{q_{i j}}-1=0, \quad i=1,2, \cdots, n.
\end{equation}
We define a solution $k=\left(k_{1}, k_{2}, \cdots, k_{n}\right)$ of system \eqref{eq2.1} as positive when each component $k_{i}>0$. Designating $f=\left(f_{1}, f_{2}, \cdots, f_{n}\right)$, we address part (a) of Theorem \ref{thm:2.1} by establishing the existence of a threshold $\alpha_{*}>0$ such that whenever $0<\alpha_{i j}<\alpha_{*}$, the algebraic system admits at least $2^{n}-1$ distinct positive solutions. Our approach constructs $2^{n}-1$ non-overlapping $n$-dimensional cuboids within $(0,+\infty)^{n}$ where the Brouwer degree of $f$ is non-vanishing, thereby guaranteeing a solution to system \eqref{eq2.1} within each cuboid.

\begin{proof}[\bf{Part (a)}]
First note that
\begin{align*}
f_i(k_1, \ldots, k_{i-1}, \left(\frac{1}{2\eta_i}\right)^{\frac{N-2s}{4s}}, k_{i+1}, \ldots, k_n) &= -\frac{1}{2} + \sum_{j=1, j \neq i}^n \alpha_{ij} k_j^{q_{ij}}\left(\frac{1}{2\eta_i}\right)^{\frac{N-2s}{4s}(p_{ij}-2)},
\end{align*}
and
\begin{align*}
f_i(k_1, \ldots, k_{i-1}, \left(\frac{1}{\eta_i}\right)^{\frac{N-2s}{4s}}, k_{i+1}, \ldots, k_n) &= \sum_{j=1, j \neq i}^n \alpha_{ij} k_j^{q_{ij}}\left(\frac{1}{\eta_i}\right)^{\frac{N-2s}{4s}(p_{ij}-2)}.
\end{align*}

These calculations reveal the behaviour of the function $f_i$ at specific boundary points. We observe that when $k_i = \left(\frac{1}{2\eta_i}\right)^{\frac{N-2s}{4s}}$, the function $f_i$ has a negative component of $-\frac{1}{2}$ plus a term that depends on the coupling parameters $\alpha_{ij}$. Conversely, when $k_i = \left(\frac{1}{\eta_i}\right)^{\frac{N-2s}{4s}}$, the function $f_i$ consists solely of terms involving the coupling parameters.

Define
\begin{align*}
\alpha_* = \frac{1}{2}\min_{1 \leq i \leq n}\left[\sum_{j=1, j \neq i}^n \left(\frac{1}{\eta_j}\right)^{\frac{N-2s}{4s}q_{ij}}\left(\frac{1}{2\eta_i}\right)^{\frac{N-2s}{4s}(p_{ij}-2)}\right]^{-1}.
\end{align*}

This threshold value $\alpha_*$ is carefully constructed to control the influence of the coupling terms. By taking the minimum across all indices $i$, we ensure that the forthcoming sign conditions on $f_i$ hold simultaneously for all components of the system. The factor of $\frac{1}{2}$ provides the necessary margin to establish strict inequalities.

From the above observation we see that if $0 < \alpha_{ij} < \alpha_*$ then
\begin{align}\label{eq2.2}
f_i(k_1, \ldots, \left(\frac{1}{2\eta_i}\right)^{\frac{N-2s}{4s}}, \ldots, k_n) < 0 < f_i(k_1, \ldots, \left(\frac{1}{\eta_i}\right)^{\frac{N-2s}{4s}}, \ldots, k_n),
\end{align}
for all $k_j \in (0, (\frac{1}{\eta_j})^{\frac{N-2s}{4s}}]$ with $j \neq i$ and all $i = 1, 2, \ldots, n$.

The inequalities in \eqref{eq2.2} establish a sign-changing property of $f_i$ along the $i$-th coordinate direction. Specifically, for each component $i$, the function $f_i$ changes sign from negative to positive as $k_i$ increases from $\left(\frac{1}{2\eta_i}\right)^{\frac{N-2s}{4s}}$ to $\left(\frac{1}{\eta_i}\right)^{\frac{N-2s}{4s}}$, regardless of the values of the other components (provided they remain in the specified ranges). This sign-changing behavior is crucial for our topological argument.

This implies the Brouwer degree
\begin{align*}
\operatorname{deg}(f, \Omega, 0) = 1,
\end{align*}
where $\Omega$ is an $n$-dimensional cuboid defined as
\begin{align*}
\Omega := \prod_{i=1}^n \left(\left(\frac{1}{2\eta_i}\right)^{\frac{N-2s}{4s}}, \left(\frac{1}{\eta_i}\right)^{\frac{N-2s}{4s}}\right).
\end{align*}

The non-vanishing Brouwer degree is a direct consequence of the sign conditions established in \eqref{eq2.2}. By the fundamental properties of the Brouwer degree theory, a non-zero degree implies the existence of at least one solution to $f(k) = 0$ within the domain $\Omega$. The cuboid $\Omega$ is constructed precisely to capture this solution based on our understanding of the function's behavior at its boundaries.

Thus $f(k) = 0$ has a solution in $\Omega$. In the following we assume that $0 < \alpha_{ij} < \alpha_*$ for $i \neq j$.

Let $1 \leq \xi \leq n-1$. There are $C_n^{\xi} = \frac{n!}{\xi!(n-\xi)!}$ different ways to decompose the index set $I := \{1, 2, \ldots, n\}$ into two disjoint nonempty subsets $I_1$ and $I_2$ so that the first subset $I_1$ has $\xi$ indices. For each of these decompositions, we prove that the algebraic system \eqref{eq2.1} has a solution $(k_1, k_2, \ldots, k_n)$ so that $k_i \in (0, (\frac{1}{2\eta_i})^{\frac{N-2s}{4s}})$ if $i \in I_1$ while $k_i \in ((\frac{1}{2\eta_i})^{\frac{N-2s}{4s}}, (\frac{1}{\eta_i})^{\frac{N-2s}{4s}})$ if $i \in I_2$. If this is the case, then the algebraic system \eqref{eq2.1} has
\begin{align*}
1 + C_n^1 + C_n^2 + \cdots + C_n^{n-1} = 2^n - 1
\end{align*}
positive solutions. Therefore system \eqref{eq:1.3b} has at least $2^n - 1$ synchronized positive solutions.

Without loss of generality, we assume that $I_1 = \{1, \ldots, \xi\}$ and $I_2 = \{\xi+1, \ldots, n\}$ and we prove that the system has a solution in the $n$-dimensional cuboid
\begin{align*}
\Omega_1 := \prod_{i=1}^{\xi} \Big(\epsilon, \left(\frac{1}{2\eta_i}\right)^{\frac{N-2s}{4s}}\Big) \times \prod_{i=\xi+1}^{n} \Big(\left(\frac{1}{2\eta_i}\right)^{\frac{N-2s}{4s}}, \left(\frac{1}{\eta_i}\right)^{\frac{N-2s}{4s}}\Big),
\end{align*}
for some $\epsilon > 0$ which will be specified next. Let $k_i \in [\epsilon, (\frac{1}{2\eta_i})^{\frac{N-2s}{4s}}]$ for $i = 1, 2, \ldots, \xi$ and $k_i \in [(\frac{1}{2\eta_i})^{\frac{N-2s}{4s}}, (\frac{1}{\eta_i})^{\frac{N-2s}{4s}}]$ for $i = \xi+1, \ldots, n$. If $1 \leq i \leq \xi$ then
\begin{align}
f_i(k_1, \ldots, k_{i-1}, \epsilon, k_{i+1}, \ldots, k_n) &= \eta_i \epsilon^{2^*_s-2} + \sum_{j=1, j \neq i}^{n} \alpha_{ij} k_j^{q_{ij}} \epsilon^{p_{ij}-2} - 1\\
&> \alpha_{in}\left(\frac{1}{2\eta_n}\right)^{\frac{N-2s}{4s}q_{in}} \epsilon^{p_{in}-2} - 1 > 0\label{eq2.3}
\end{align}
for $\epsilon > 0$ sufficiently small, since $k_n \geq (\frac{1}{2\eta_n})^{\frac{N-2s}{4s}}$ and $p_{in} < 2$. Using \eqref{eq2.2} and \eqref{eq2.3}, we see that
\begin{align*}
\operatorname{deg}(f, \Omega_1, 0) = (-1)^{\xi}.
\end{align*}

This implies that $f(k) = 0$ has a solution in $\Omega_1$.
\end{proof}

Now we turn to prove Theorem \eqref{thm:2.1}(b). In this case, it is not possible to find a positive solution of
\eqref{eq2.1} in any of the n-dimensional cuboids constructed above. Indeed, it seems to be impossible to find
an n-dimensional cuboid on which f itself has a nonzero degree. The idea to prove Theorem \eqref{thm:2.1}(b) is
that we use the inverse matrix A of B = $(\alpha _{ij} )_{n\times n}$ to convert system \eqref{eq2.1} into a new system g(k) = 0
so that an n-dimensional cuboid on which g has a nonzero Brouwer degree can be constructed.

\begin{proof}[\bf{Part (b)}]

Let $q=2^{*}_s-p$. Then $q_{i j}=q$ for $i \neq j$. Using the inverse matrix $A=(a_{i j})_{n \times n}$ of $B=(\alpha_{i j})_{n \times n}$, we write the system \eqref{eq2.1} as
\[
(k_{1}^{q}, k_{2}^{q}, \ldots, k_{n}^{q})^{T}=A(k_{1}^{2-p}, k_{2}^{2-p}, \ldots, k_{n}^{2-p})^{T}
\]
where $(c_{1}, c_{2}, \ldots, c_{n})^{T}$ represents the transpose of a vector $(c_{1}, c_{2}, \ldots, c_{n})$. We define
\[
g_{i}(k_{1}, k_{2}, \ldots, k_{n}):=k_{i}^{q}-\sum_{j=1}^{n} a_{i j} k_{j}^{2-p}, \quad i=1,2, \ldots, n.
\]

This allows us to convert the original system \eqref{eq2.1} into
\begin{equation}\label{eq:2.5}
g_{i}(k_{1}, k_{2}, \ldots, k_{n})=0, \quad i=1,2, \ldots, n.
\end{equation}

Since $q=2^{*}_s-p>2-p$, we can select $T>0$ sufficiently large such that, for all $k_{j} \in(0, T]$ with $j \neq i$,
\[
g_{i}(k_{1}, \ldots, k_{i-1}, T, k_{i+1}, \ldots, k_{n}) \geq T^{q}-\left(\sum_{j=1}^{n} a_{i j}\right) T^{2-p}>0.
\]

For $\varepsilon \in(0, T)$, noting that $p<2$, $a_{i j}>0$ for $i \neq j$, and $\sum_{j=1}^{n} a_{i j}>0$ for $i=1,2, \ldots, n$, we obtain
\[
g_{i}(k_{1}, \ldots, k_{i-1}, \varepsilon, k_{i+1}, \ldots, k_{n}) \leq \varepsilon^{q}-\left(\sum_{j=1}^{n} a_{i j}\right) \varepsilon^{2-p}<0,
\]
for all $k_{j} \in[\varepsilon, T]$ with $j \neq i$, provided that $\varepsilon$ is sufficiently small. Setting $g=(g_{1}, \ldots, g_{n})$, we have
\[
\operatorname{deg}(g,(\varepsilon, T)^{n}, 0)=1.
\]

Therefore, system \eqref{eq:2.5} possesses a solution in $(\varepsilon, T)^{n}$.
\end{proof}
In the following proposition, we illustrate the nature of the condition on the matrix $B$ referenced in Theorem \ref{thm:2.1}(b). This proposition applies for any $n$ but specifically for the case when $\alpha_{i j}$ values are close to a single constant.

\begin{proposition}\label{pro:2.1}
Let $\alpha$ be a number such that $\alpha>\eta_{n}$. There exists $\gamma_{0}>0$ such that if $|\alpha_{i j}-\alpha|<\gamma_{0}$ for $i \neq j$, then the matrix $B=(\alpha_{i j})_{n \times n}$ has an inverse $A=(a_{i j})_{n \times n}$ satisfying
\begin{equation}\label{eq:2.6}
a_{i j}>0 \text{ for } i \neq j \text{ and } \sum_{j=1}^{n} a_{i j}>0 \text{ for } i=1,2, \ldots, n
\end{equation}
\end{proposition}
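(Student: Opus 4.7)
The plan is a perturbation argument anchored at the \emph{constant off-diagonal} matrix
\[
B_0 := \operatorname{diag}(\eta_1, \ldots, \eta_n) + \alpha(J - I),
\]
where $J$ denotes the $n \times n$ all-ones matrix; so $B_0$ differs from the given $B$ only in that every off-diagonal entry equals $\alpha$. First I would verify that $B_0^{-1}$ already satisfies the two strict-sign conditions in \eqref{eq:2.6} under the hypothesis $\alpha > \eta_n$, and then I would use openness of these conditions to transfer the conclusion to every $B$ whose off-diagonal entries lie within a small $\gamma_0$-neighbourhood of $\alpha$ (the diagonal being fixed at the $\eta_i$'s).

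For the anchor step I would write $B_0 = D' + \alpha \mathbf{1}\mathbf{1}^T$ with $D' := \operatorname{diag}(\eta_1 - \alpha, \ldots, \eta_n - \alpha)$. Since $\alpha > \eta_n \geq \eta_i$, every diagonal entry of $D'$ is strictly negative, so $D'$ is invertible and the Sherman--Morrison identity yields an explicit formula for $B_0^{-1}$ in terms of $D'^{-1}$ and $S := \sum_{j=1}^n (\eta_j - \alpha)^{-1}$. A short computation then gives
\[
(B_0^{-1})_{ij} = -\frac{\alpha}{(\eta_i - \alpha)(\eta_j - \alpha)(1 + \alpha S)} \text{ for } i \neq j, \qquad \sum_{j=1}^n (B_0^{-1})_{ij} = \frac{1}{(\eta_i - \alpha)(1 + \alpha S)}.
\]
The decisive point is the sign $1 + \alpha S < 0$: rewriting $-\alpha S = \sum_{j=1}^n \alpha / (\alpha - \eta_j)$ and noting that every summand exceeds $1$ because $0 < \eta_j < \alpha$, one obtains $-\alpha S > n \geq 2$. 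Combined with $\eta_i - \alpha < 0$, both displayed quantities are strictly positive, so $B_0^{-1}$ meets \eqref{eq:2.6}.

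For the perturbation step I would invoke the fact that the three requirements---invertibility of $B$, strict positivity of the off-diagonal entries of $B^{-1}$, and strict positivity of the row sums of $B^{-1}$---jointly define an open set in the space of matrices: the map $B \mapsto B^{-1}$ is continuous on the open set of invertible matrices, and strict inequalities on continuous functions are stable under small perturbations. Since $B_0$ lies in this open set by the previous step and the only parameters being varied are the off-diagonal entries $\alpha_{ij}$, a direct continuity argument supplies $\gamma_0 > 0$ for which $|\alpha_{ij} - \alpha| < \gamma_0$ (for $i \neq j$) keeps $B$ inside the good set, which is precisely the content of \eqref{eq:2.6}.

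The main obstacle is the sign analysis in the anchor step; once $1 + \alpha S < 0$ is established the explicit formulas deliver both conclusions at once, and the openness argument that follows is essentially free. The hypothesis $\alpha > \eta_n$ (rather than $\alpha > \eta_j$ for some single $j$) is used precisely here: it forces \emph{every} summand of $S$ to be negative, which in turn drives the critical inequality $-\alpha S > 1$.
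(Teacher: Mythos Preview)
Your proof is correct and follows essentially the same strategy as the paper: compute the inverse of the constant-off-diagonal anchor matrix explicitly, verify the strict sign conditions there, then invoke continuity of $B\mapsto B^{-1}$ to propagate them to a neighbourhood. The only difference is cosmetic---the paper reaches the explicit entries of $B_0^{-1}$ via determinant and cofactor formulas for $\Delta_n(\eta_1,\ldots,\eta_n)$, whereas you obtain the identical expressions more directly through the Sherman--Morrison rank-one update; your sign argument for $1+\alpha S<0$ is likewise equivalent to the paper's observation that $\Delta_n\neq 0$ together with its row-sum formula.
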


\begin{proof}
We construct an auxiliary matrix $B^{*}$ by replacing each $\alpha_{i j}$ ($i \neq j$) in $B$ with the constant $\alpha$. Since $\alpha>\eta_{n}$, we have
\[
\Delta_{n}\left(\eta_{1}, \eta_{2}, \cdots, \eta_{n}\right):=\operatorname{det}\left(B^{*}\right)=\prod_{j=1}^{n}\left(\eta_{j}-\alpha\right)\left(1+\sum_{k=1}^{n} \frac{\alpha}{\eta_{k}-\alpha}\right) \neq 0
\]

Thus, $B^{*}$ has an inverse matrix $A^{*}=\left(a_{i j}^{*}\right)_{n \times n}$. By direct computation, we find
\[
a_{i j}^{*}=\left\{\begin{array}{ll}
\frac{-\alpha}{\Delta_{n}\left(\eta_{1}, \eta_{2}, \cdots, \eta_{n}\right)} \prod_{k=1, k \neq i, j}^{n}\left(\eta_{k}-\alpha\right) & \text{if } i \neq j, \\
\frac{\Delta_{n-1}\left(\eta_{1}, \cdots, \eta_{i-1}, \eta_{i+1}, \cdots, \eta_{n}\right)}{\Delta_{n}\left(\eta_{1}, \eta_{2}, \cdots, \eta_{n}\right)} & \text{if } i=j.
\end{array}\right.
\]

Note that for $n=2$, the product $\prod_{k=1, k \neq i, j}^{n}\left(\eta_{k}-\alpha\right)$ is interpreted as 1. From these expressions, we determine that $a_{i j}^{*}>0$ when $i \neq j$, and
\[
\sum_{j=1}^{n} a_{i j}^{*}=\frac{\prod_{j=1, j \neq i}^{n}\left(\alpha-\eta_{j}\right)}{\left|\Delta_{n}\left(\eta_{1}, \eta_{2}, \cdots, \eta_{n}\right)\right|}>0,
\]
for all $i=1,2, \cdots, n$. By continuity, the desired properties hold for the original matrix $B$ when its non-diagonal elements are sufficiently close to $\alpha$.
\end{proof}

The proof of Proposition \ref{pro:2.1} also shows that if all $\alpha_{i j}$'s $(i \neq j)$ are a single $\alpha$ and if $0<\alpha<\eta_{1}$ then the elements off the main diagonal of the inverse matrix $A$ of $B$ are all negative; this is in sharp contrast with the case $\alpha>\eta_{n}$.

The formula for $\sum_{j=1}^{n} a_{i j}^{*}$ shows that $\sum_{j=1}^{n} a_{i j}^{*} \simeq \frac{1}{\alpha}$ for $\alpha$ sufficiently large and for all $i=1,2, \cdots, n$. By the proof of Theorem \ref{thm:2.1}(b), for any $\varepsilon$ and $T$ such that
\[
0<\varepsilon<\left(\min _{1 \leq i \leq n} \sum_{j=1}^{n} a_{i j}\right)^{\frac{N-2s}{4s}} \leq\left(\max _{1 \leq i \leq n} \sum_{j=1}^{n} a_{i j}\right)^{\frac{N-2s}{4s}}<T,
\]
we have
\[
g_{i}\left(k_{1}, \cdots, k_{i-1}, \varepsilon, k_{i+1}, \cdots, k_{n}\right)<0<g_{i}\left(k_{1}, \cdots, k_{i-1}, T, k_{i+1}, \cdots, k_{n}\right).
\]

For $\alpha_{i j}$ close to $\alpha$ with $\alpha$ being sufficiently large, since $\sum_{j=1}^{n} a_{i j} \simeq \frac{1}{\alpha}$ for all $i$, if $\left(k_{1}, k_{2} \cdots, k_{n}\right)$ is any positive solution of the equation $g\left(k_{1}, k_{2}, \cdots, k_{n}\right)=0$ then it must be that $k_{i} \simeq \alpha^{-\frac{N-2s}{4s}}$ for all $i$. This justifies the statement before the proof of Theorem $\ref{thm:2.1}(\mathrm{b})$ that under the assumptions of Theorem $\ref{thm:2.1}(\mathrm{b})$, it is impossible to obtain a solution $\left(k_{1}, k_{2} \cdots, k_{n}\right)$ of the equation $f\left(k_{1}, k_{2}, \cdots, k_{n}\right)=0$ in any cuboids constructed in the proof of Theorem $\ref{thm:2.1}(\mathrm{a})$.

\begin{proposition}\label{pro2.2}
A matrix which is sufficiently close to any of the three matrices
\begin{itemize}
\item[(a)]
\[
B=\left(\begin{array}{ccc}
\eta_{1} & \alpha_{1} & \alpha_{1} \\
\alpha_{1} & \eta_{2} & \alpha_{2} \\
\alpha_{1} & \alpha_{2} & \eta_{3}
\end{array}\right) \quad \text{with} \quad 
\left\{\begin{array}{l}
\alpha_{1}>\max \left\{\eta_{1}, \alpha_{2}\right\} , \\
\alpha_{2} \geq \eta_{2}+\eta_{3},
\end{array}\right.
\]

\item[(b)]
\[
B=\left(\begin{array}{cccc}
\eta_{1} & \alpha_{1} & \alpha_{1} & \alpha_{1} \\
\alpha_{1} & \eta_{2} & \alpha_{2} & \alpha_{2} \\
\alpha_{1} & \alpha_{2} & \eta_{3} & \alpha_{2} \\
\alpha_{1} & \alpha_{2} & \alpha_{2} & \eta_{4}
\end{array}\right) \quad \text{with} \quad
\left\{\begin{array}{l}
\alpha_{1}>\max \left\{\eta_{1}, \alpha_{2}\right\} ,\\
\alpha_{2}>\max \left\{\eta_{2}, \eta_{3}, \eta_{4}\right\},
\end{array}\right.
\]

\item[(c)]
\[
B=\left(\begin{array}{cccc}
\eta_{1} & \alpha_{2} & \alpha_{1} & \alpha_{1} \\
\alpha_{2} & \eta_{2} & \alpha_{1} & \alpha_{1} \\
\alpha_{1} & \alpha_{1} & \eta_{3} & \alpha_{3} \\
\alpha_{1} & \alpha_{1} & \alpha_{3} & \eta_{4}
\end{array}\right) \quad \text{with} \quad
\left\{\begin{array}{l}
\alpha_{1} \geq \max \left\{\alpha_{2}, \alpha_{3}\right\}, \\
\alpha_{2} \geq \eta_{1}+\eta_{2}, \\
\alpha_{3} \geq \eta_{3}+\eta_{4},
\end{array}\right.
\]
\end{itemize}

has an inverse $A=\left(a_{i j}\right)_{n \times n}$ such that
\[
a_{i j}>0 \text{ for } i \neq j \text{ and } \sum_{j} a_{i j}>0 \text{ for all } i.
\]
\end{proposition}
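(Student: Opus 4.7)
My overall strategy is to mimic the proof of Proposition~\ref{pro:2.1}: for each of the three prescribed reference matrices $B$, I will verify condition \eqref{eq:2.6} directly by computing $B^{-1}$ explicitly and checking the signs of its off-diagonal entries and row sums with every relevant inequality \emph{strict}, and then invoke continuity of matrix inversion on the open set $\{\det\neq 0\}$ to extend the conclusion to every matrix in a suitably small neighborhood of $B$. The structural assumptions on $\eta_i$'s and $\alpha_k$'s in cases (a)--(c) are tailored precisely so that these sign-pattern calculations succeed.

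For matrix (a), the plan is a direct $3\times 3$ computation. Expanding along the first row gives
\[
\det(B) = \eta_1(\eta_2\eta_3-\alpha_2^2) + \alpha_1^2(2\alpha_2-\eta_2-\eta_3),
\]
which is nonzero under the hypothesis $\alpha_1>\alpha_2\geq \eta_2+\eta_3$ since the $\alpha_1^2$ coefficient is bounded below by $\eta_2+\eta_3>0$. I then write out all nine cofactors. Each off-diagonal entry of $A=B^{-1}$ reduces, up to the common factor $1/\det B$, to an expression of the form $\alpha_1(\alpha_2-\eta_j)$ with $j\in\{2,3\}$, or $\alpha_1^2-\eta_1\alpha_2$; the first group is positive because $\alpha_2\geq \eta_2+\eta_3>\eta_j$, and the second is positive because $\alpha_1>\alpha_2$ and $\alpha_1>\eta_1$ yield $\alpha_1^2>\alpha_1\alpha_2>\eta_1\alpha_2$. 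Row sums are then verified by grouping the appropriate cofactors using the same inequalities.

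For matrices (b) and (c), I will exploit the block structure via the Schur complement formula. Matrix (b) decomposes as
\[
B = \begin{pmatrix} \eta_1 & \alpha_1\mathbf{1}^T \\ \alpha_1\mathbf{1} & M\end{pmatrix},
\]
where $\mathbf{1}\in\mathbb{R}^3$ and $M$ is the $3\times 3$ matrix with diagonal $(\eta_2,\eta_3,\eta_4)$ and all off-diagonals equal to $\alpha_2$. Since $\alpha_2>\max\{\eta_2,\eta_3,\eta_4\}$, the explicit formulas from the proof of Proposition~\ref{pro:2.1} apply directly to $M$ and yield that $M^{-1}$ has positive off-diagonal entries, positive row sums, and an explicitly computable $\mathbf{1}^T M^{-1}\mathbf{1}$. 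The Schur scalar $S=\eta_1-\alpha_1^2\mathbf{1}^T M^{-1}\mathbf{1}$ and the corrected $3\times 3$ block $M^{-1}+\frac{\alpha_1^2}{S}(M^{-1}\mathbf{1})(M^{-1}\mathbf{1})^T$ can then be evaluated and their signs checked block by block, using $\alpha_1>\alpha_2$ to control the sign of $S$. Matrix (c) decomposes further as
\[
B = \begin{pmatrix} M_1 & \alpha_1 J \\ \alpha_1 J & M_2\end{pmatrix}, \qquad J = \begin{pmatrix}1 & 1\\ 1 & 1\end{pmatrix},
\]
with $M_k$ the $2\times 2$ blocks; because $J$ has rank one, the Schur complement $M_1-\alpha_1^2 J M_2^{-1} J$ collapses to $M_1-c\alpha_1^2 J$ with $c=\mathbf{1}^T M_2^{-1}\mathbf{1}\in\mathbb{R}$, reducing the problem to an explicit $2\times 2$ inversion of the same structural type handled in case (a).

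I expect the main obstacle to be case (c), where the Schur reduction couples the two $2\times 2$ blocks and one must track the sign of $c=\mathbf{1}^T M_2^{-1}\mathbf{1}$ simultaneously with several cofactor signs. A secondary subtlety is that the hypotheses in cases (a) and (c) involve non-strict inequalities (for instance $\alpha_2\geq\eta_2+\eta_3$ and $\alpha_1\geq\max\{\alpha_2,\alpha_3\}$), so before invoking continuity I must confirm that the resulting sign conditions on $A$ remain \emph{strict} at the reference matrix. This should follow from the strict positivity $\eta_i>0$ (which automatically upgrades, for example, $\alpha_2\geq\eta_2+\eta_3$ to $\alpha_2>\eta_j$) together with the strict inequality $\alpha_1>\eta_1$ in (a)--(b), providing the margin needed for the perturbation step.
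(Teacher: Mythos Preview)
Your proposal is correct and follows the same approach as the paper: verify \eqref{eq:2.6} for each reference matrix $B$ by direct computation of $B^{-1}$, then invoke continuity of matrix inversion to extend to nearby matrices. The paper's proof is in fact a one-line assertion that ``detailed computational verification'' works together with continuity, so your explicit cofactor analysis for~(a) and Schur-complement block reductions for~(b) and~(c) supply the details the paper omits; your flagged subtlety about non-strict hypotheses is handled exactly as you say, since $\eta_i>0$ and the strict inequality $\alpha_1>\max\{\eta_1,\alpha_2\}$ (resp.\ $\alpha_2>\max\{\eta_2,\eta_3,\eta_4\}$) force all the relevant cofactor expressions to be strictly positive at the reference matrix.
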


\noindent
\begin{proof}
 Through detailed computational verification, we can confirm that matrices of type $B$ in any of the three forms (a), (b), or (c) possess the required property. Moreover, this property remains stable under small perturbations due to the principle of continuity.
\end{proof}
Proposition \ref{pro:2.1} demonstrates that when all off-diagonal elements $\alpha_{ij}$ $(i \neq j)$ share a common value $\alpha$, the inverse matrix $A$ of $B$ exists and satisfies equation \eqref{eq:2.6}, provided $\alpha$ is sufficiently large. However, when these off-diagonal elements $\alpha_{ij}$ $(i \neq j)$ differ in value, their magnitude alone is insufficient; they must additionally satisfy specific structural conditions to ensure the existence of the inverse matrix $A$ that satisfies \eqref{eq:2.6}. This is illustrated in Proposition \ref{pro2.2} .

When the structural conditions for the three matrices $B$ described in Proposition \ref{pro2.2} are met, it can be demonstrated that if $\alpha_{1}$ is sufficiently large while all other entries remain fixed, then $\sum_{j=1}^{n} a_{ij} \approx \frac{1}{\alpha_{1}}$ for all values of $i$. Consequently, for matrices $B$ considered in Proposition \ref{pro2.2} with sufficiently large $\alpha_{1}$, no solution to the equation $f(k_{1}, k_{2}, \ldots, k_{n})=0$ can be found within any cuboid constructed according to the method outlined in the proof of Theorem \ref{thm:2.1}(a).

\section{ Proof of Theorem \ref{thm:2.2}}

Throughout this section, we maintain the following assumptions: $N > 2s$, $p_{ij} = p < 2$, (A1), (A3), and $\alpha_{ij} = \alpha$ for all distinct indices $i, j \in \{1, 2, \ldots, n\}$. For notational convenience, we define $q = q_{ij} = 2^*_s - p$.

We observe that $(k_1 U, k_2 U, \ldots, k_n U)$ constitutes a positive solution of system \eqref{eq:1.3b} if and only if each $k_i > 0$ and the vector $(k_1, k_2, \ldots, k_n)$ satisfies the following system:
\[
\eta_i k_i^{2^*_s-1} + \alpha k_i^{p-1} \sum_{j=1, j \neq i}^n k_j^q = k_i, \quad i=1,2,\ldots,n.
\]

By introducing the transformation $t_i = k_i^q$, our investigation of synchronized positive solutions to \eqref{eq:1.3b} reduces to analyzing the positive solutions $(t_1, t_2, \ldots, t_n)$ of the nonlinear algebraic system:
\begin{equation}\label{eq3.1}
\eta_i t_i + \alpha \sum_{j=1, j \neq i}^n t_j = t_i^{\frac{2-p}{q}}, \quad i=1,2,\ldots,n.
\end{equation}

Indeed, the number of synchronized positive solutions to \eqref{eq:1.3b} corresponds precisely to the number of positive solutions $(t_1, t_2, \ldots, t_n)$ of system \eqref{eq3.1}.

To facilitate our analysis of system \eqref{eq3.1}, we convert it into an expanded algebraic system. For notational simplicity, we define:
\[
\kappa := \frac{2-p}{q} = \frac{2-p}{2^*_s-p} = \frac{q+2-2^*_s}{q}.
\]

Note that $\kappa \in (0,1)$. With this definition, $(t_1, t_2, \ldots, t_n)$ represents a positive solution of \eqref{eq3.1} if and only if there exists some $\tau > 0$ such that $(t_1, t_2, \ldots, t_n, \tau)$ satisfies the expanded nonlinear algebraic system consisting of $n+1$ equations:
\begin{equation}\label{eq3.2}
\left\{\begin{array}{l}
t_i^{\kappa} + (\alpha-\eta_i)t_i = \tau, \quad i=1,2,\ldots,n ,\\
\alpha \sum_{i=1}^n t_i = \tau.
\end{array}\right.
\end{equation}

Based on the preceding analysis, we have established the following lemma.

\begin{lemma}\label{lem3.1}
The number of synchronized positive solutions of system \eqref{eq:1.3b} is precisely equal to the number of positive solutions of system \eqref{eq3.2}.
\end{lemma}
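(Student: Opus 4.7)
The plan is to prove the lemma by composing two explicit bijections, both of which are already implicit in the derivation of \eqref{eq3.1} and \eqref{eq3.2} that immediately precedes the statement. Specifically, I would establish (i) a bijection between synchronized positive solutions $(k_1 U,\ldots,k_n U)$ of \eqref{eq:1.3b} and positive solutions $(t_1,\ldots,t_n)$ of \eqref{eq3.1}, and (ii) a bijection between positive solutions of \eqref{eq3.1} and positive solutions $(t_1,\ldots,t_n,\tau)$ of \eqref{eq3.2}. Composing these two bijections yields the asserted equality of cardinalities.

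For the first bijection, I would substitute the ansatz $u_i = k_i U$ into \eqref{eq:1.3b} and invoke the defining equation for $U$ together with the critical scaling condition $p_{ij}+q_{ij}=2^*_s$ to cancel the common factor $U^{2^*_s-1}$ on both sides, reducing the PDE system to the algebraic system \eqref{eq2.1}. Under the standing hypotheses of this section ($p_{ij}=p$, $\alpha_{ij}=\alpha$ for $i\neq j$, and $q=2^*_s-p$), multiplying \eqref{eq2.1} by $k_i^{2-p}$ and applying the substitution $t_i := k_i^q$ produces \eqref{eq3.1} term by term. Since the map $k \mapsto k^q$ is a strictly increasing bijection of $(0,\infty)$ onto itself (as $q>0$), the correspondence $(k_1,\ldots,k_n) \leftrightarrow (t_1,\ldots,t_n)$ is one-to-one and preserves positivity in both directions.

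For the second bijection, given a positive solution $(t_1,\ldots,t_n)$ of \eqref{eq3.1}, I would define $\tau := \alpha\sum_{i=1}^n t_i$, which is automatically positive. Writing $\alpha \sum_{j\neq i} t_j = \tau - \alpha t_i$ and substituting into \eqref{eq3.1} rearranges each equation into the form $t_i^\kappa + (\alpha-\eta_i)t_i = \tau$; together with the defining identity $\alpha \sum_{i=1}^n t_i = \tau$, this is precisely \eqref{eq3.2}. Conversely, given a positive solution $(t_1,\ldots,t_n,\tau)$ of \eqref{eq3.2}, eliminating $\tau$ from the first $n$ equations via the last one recovers \eqref{eq3.1}. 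Thus $\tau$ is uniquely determined by $(t_1,\ldots,t_n)$ and positivity is automatic, so this correspondence is also a bijection.

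I do not anticipate a substantive obstacle: the lemma is essentially a packaging of the algebraic manipulations already carried out in the derivation of \eqref{eq3.1} and \eqref{eq3.2}. The only point warranting care is ensuring that positivity is preserved in both directions of each bijection, which follows from $q>0$, $\alpha>0$, and $t_i>0$. The one genuinely analytic input — the cancellation of $U^{2^*_s-1}$ in passing from \eqref{eq:1.3b} to \eqref{eq2.1} — uses only the homogeneity afforded by the critical-exponent relation $p_{ij}+q_{ij}=2^*_s$, so no further estimates are needed.
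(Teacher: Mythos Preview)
Your proposal is correct and follows essentially the same approach as the paper: the paper presents exactly these algebraic reductions in the discussion immediately preceding the lemma and then simply states that the lemma follows ``based on the preceding analysis.'' Your write-up makes the two bijections and the positivity checks explicit, but the underlying argument is identical.
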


We will now demonstrate that the number of positive solutions of system \eqref{eq3.2} can be determined by counting either the positive solutions of a single equation (when $\alpha \geq \eta_n$) or the positive solutions of one equation from a set of at most $2^n$ equations (when $\alpha < \eta_n$). To facilitate our analysis, we introduce the following notation:
\[
f_i(t) = t^\kappa + (\alpha - \eta_i)t, \quad i=1,2,\ldots,n.
\]

When $\alpha \geq \eta_n$, we have $\alpha \geq \eta_i$ for all $i$ since $\eta_1 \leq \eta_2 \leq \cdots \leq \eta_n$. This situation is straightforward to analyze. For each index $i$, the function $f_i$ is strictly increasing from $(0,+\infty)$ onto $(0,+\infty)$ and possesses an inverse function $h_i: (0,+\infty) \rightarrow (0,+\infty)$. Therefore, $f_i(t_i) = t_i^\kappa + (\alpha - \eta_i)t_i = \tau$ if and only if $h_i(\tau) = t_i$. Consequently, the number of positive solutions of system \eqref{eq3.2} equals the number of positive solutions of the single equation:
\begin{equation}\label{eq3.3}
\alpha \sum_{i=1}^n h_i(\tau) = \tau, \quad \tau \in (0,+\infty).
\end{equation}

The case where $\alpha < \eta_n$ is considerably more complex. In this scenario, the function $f_n$ reaches its maximum value
\[
A = A(\alpha) := \frac{\kappa^{\frac{\kappa}{1-\kappa}} - \kappa^{\frac{1}{1-\kappa}}}{(\eta_n - \alpha)^{\frac{\kappa}{1-\kappa}}} = \frac{[((2-p)/q)^{\frac{(N-2s)(2-p)}{4s}} - ((2-p)/q)^{\frac{(N-2s)q}{4s}}]}{(\eta_n-\alpha)^{\frac{(N-2s)(2-p)}{4s}}}
\]
at the point
\[
T = T(\alpha) := \left(\frac{\kappa}{\eta_n-\alpha}\right)^{\frac{1}{1-\kappa}} = \left(\frac{2-p}{q(\eta_n-\alpha)}\right)^{\frac{(N-2s)q}{4s}}.
\]

Furthermore, we observe that $f_n$ is strictly increasing in the interval $(0,T]$ and strictly decreasing in $[T,+\infty)$. For each index $i$, there exists a unique number $T_i'$ such that
\[
0 < T_i' \leq T, \quad f_i(T_i') = A,
\]
and such that $f_i$ restricted to $(0,T_i']$ is strictly increasing from $(0,T_i']$ onto $(0,A]$. We denote the inverse function of this restricted $f_i$ by $h_i: (0,A] \rightarrow (0,T_i']$. Note that we have assigned different meanings to $h_i$ in different contexts, which should not cause confusion. We will employ other symbols similarly.

If $\alpha < \eta_i$ for some $i$, then there exists a unique second number $T_i''$ such that
\[
T \leq T_i'', \quad f_i(T_i'') = A,
\]
and such that $f_i$ restricted to $[T_i'',S_i)$ is strictly decreasing from $[T_i'',S_i)$ onto $(0,A]$, where
\[
S_i := \left(\frac{1}{\eta_i-\alpha}\right)^{\frac{1}{1-\kappa}} = \left(\frac{1}{\eta_i-\alpha}\right)^{\frac{(N-2s)q}{4s}}.
\]

In this case, we denote the inverse function of $f_i$ restricted to $[T_i'',S_i)$ by $k_i: (0,A] \rightarrow [T_i'',S_i)$. It is evident that $T_n' = T_n'' = T$, and each $h_i$ $(i=1,2,\ldots,n)$ is well-defined, whereas $k_i$ is well-defined if and only if $\alpha < \eta_i$. For the case where $0 < \alpha < \eta_i$, the graphical representations of functions $f_n$ and $f_i$ are illustrated in Figure 1 of \cite{jing2024number}.

Let $j \geq 0$ denote the smallest integer satisfying $\alpha < \eta_{j+1}$, and let $k$ denote the smallest integer such that $\eta_{k+1} = \eta_{k+2} = \cdots = \eta_n$ with the constraint $k \geq j$. Consider a positive solution $(t_1, t_2, \ldots, t_n, \tau)$ of system \eqref{eq3.2}. For such a solution, we have
\[
0 < \tau = f_n(t_n) \leq \max_{t \geq 0} f_n(t) := A,
\]
where for indices $i = 1, 2, \ldots, j$, we have $t_i = h_i(\tau) \in (0, T_i']$, and for indices $i = j+1, j+2, \ldots, n$, either $t_i = h_i(\tau) \in (0, T_i']$ or $t_i = k_i(\tau) \in [T_i'', S_i)$. 

Observe that for indices in the range $j+1 \leq i \leq k$ and for all $\tau \in (0, A)$, we have
\[
h_i(\tau) < h_i(A) = T_i' < T < T_i'' = k_i(A) < k_i(\tau),
\]
and similarly, for indices in the range $k+1 \leq i \leq n$ and for all $\tau \in (0, A)$, we have
\[
h_i(\tau) < h_i(A) = T_i' = T = T_i'' = k_i(A) < k_i(\tau).
\]

The index set $\{j+1, j+2, \ldots, k\}$ contains $2^{k-j}$ subsets, which we denote as $J_1, J_2, \ldots, J_{2^{k-j}}$. We note that this index set is empty when $k = j$. Let us define $\rho^*$ as the number of subsets $J_l$ for which the following equality holds:
\begin{equation}\label{eq3.4}
\alpha \sum_{i \in I \backslash J_l} h_i(A) + \alpha \sum_{i \in J_l} k_i(A) = A,
\end{equation}
where $I = \{1, 2, \ldots, n\}$ represents the complete index set. The value $\rho^*$ equals the number of positive solutions $(t_1, \ldots, t_n, \tau)$ of system \eqref{eq3.2} where $\tau = A$.

We denote by $I_1, I_2, \ldots, I_{2^{n-j}}$ the $2^{n-j}$ subsets of the index set $\{j+1, j+2, \ldots, n\}$. For each $l = 1, 2, \ldots, 2^{n-j}$, let $\rho_l$ represent the number of solutions to the equation:
\begin{equation}\label{eq3.5}
\alpha \sum_{i \in I \backslash I_l} h_i(\tau) + \alpha \sum_{i \in I_l} k_i(\tau) = \tau, \quad \tau \in (0, A).
\end{equation}

If we define $\rho^{**} = \sum_{l=1}^{2^{n-j}} \rho_l$, then $\rho^{**}$ equals the number of positive solutions $(t_1, \ldots, t_n, \tau)$ of system \eqref{eq3.2} with $\tau \in (0, A)$.

We can summarize the conclusions derived above in the following lemma.

\begin{lemma}\label{lem3.2}
(a) If $\alpha \geq \eta_n$, then the number of positive solutions of system \eqref{eq3.2} equals the number of solutions of the single equation \eqref{eq3.3}.

(b) If $\alpha < \eta_n$ and $j \geq 0$ is the smallest integer such that $\alpha < \eta_{j+1}$, then the number of positive solutions of system \eqref{eq3.2} equals $\rho^* + \rho^{**}$, where $\rho^*$ and $\rho^{**}$ are defined above through equations \eqref{eq3.4} and \eqref{eq3.5}.
\end{lemma}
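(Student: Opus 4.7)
The plan is to formalize the case analysis already set up in the paragraphs immediately preceding the statement. The lemma is really a bookkeeping consequence of the monotonicity structure of the auxiliary functions $f_i(t)=t^\kappa+(\alpha-\eta_i)t$, so I will proceed by fixing $\tau>0$, counting the admissible tuples $(t_1,\dots,t_n)$ that satisfy the first $n$ equations of \eqref{eq3.2} for this fixed $\tau$, and then substituting into the last equation $\alpha\sum_{i=1}^n t_i=\tau$ to obtain a scalar equation in $\tau$.

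For part (a), I would first observe that under $\alpha\ge\eta_n$, the coefficient $\alpha-\eta_i\ge 0$ for every $i$, so each $f_i$ is strictly increasing and bijective from $(0,+\infty)$ onto $(0,+\infty)$. Hence $t_i^\kappa+(\alpha-\eta_i)t_i=\tau$ has a unique positive root $t_i=h_i(\tau)$ for every $\tau>0$. Substituting these into $\alpha\sum t_i=\tau$ reduces \eqref{eq3.2} to \eqref{eq3.3}, and since the map $(t_1,\dots,t_n,\tau)\mapsto\tau$ is a bijection between positive solutions of \eqref{eq3.2} and positive solutions of \eqref{eq3.3}, the counts agree.

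For part (b), I would begin by noting that necessarily $\tau=f_n(t_n)\in(0,A]$ because $f_n$ attains the maximum value $A$ at $T$ and is strictly increasing on $(0,T]$, strictly decreasing on $[T,+\infty)$. Then I split the analysis into the two sub-cases $\tau\in(0,A)$ and $\tau=A$. For $\tau\in(0,A)$: for $1\le i\le j$ we have $\alpha\ge\eta_i$ so $f_i$ is monotone and $t_i=h_i(\tau)$ is forced; for $j+1\le i\le n$ we have $\alpha<\eta_i$ and $f_i(t)=\tau$ admits exactly the two positive roots $h_i(\tau)$ and $k_i(\tau)$, which are distinct by the strict inequalities $h_i(\tau)<T_i'\le T\le T_i''<k_i(\tau)$ recorded just before the statement. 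Parameterising the choice of root at each index by a subset $I_l\subseteq\{j+1,\dots,n\}$ partitions the positive solutions with $\tau\in(0,A)$ into $2^{n-j}$ disjoint families, the $l$-th family being in one-to-one correspondence with the solutions $\tau\in(0,A)$ of the scalar equation \eqref{eq3.5}; summing gives $\rho^{**}$.

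The only subtlety, and the one I want to handle carefully, is the boundary case $\tau=A$. Here $f_n(t_n)=A$ forces $t_n=T$, and for every index $k+1\le i\le n$ one has $h_i(A)=T_i'=T=T_i''=k_i(A)$, so there is no genuine choice at those indices. For $j+1\le i\le k$ one still has two distinct roots $h_i(A)=T_i'<T_i''=k_i(A)$, producing $2^{k-j}$ admissible tuples parameterised by subsets $J_l\subseteq\{j+1,\dots,k\}$. Each such tuple is a valid solution of \eqref{eq3.2} precisely when the last equation $\alpha\sum t_i=A$ holds, i.e.\ when \eqref{eq3.4} is satisfied, and the number of $J_l$ for which this happens is by definition $\rho^*$. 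Since the cases $\tau\in(0,A)$ and $\tau=A$ are disjoint and exhaustive, the total count of positive solutions of \eqref{eq3.2} is $\rho^*+\rho^{**}$. The main obstacle is purely notational rather than analytic: I must be careful that the indices $j$ and $k$ are exactly as defined and that the coincidence $h_i=k_i$ on $\{k+1,\dots,n\}$ at $\tau=A$ is used to avoid double-counting.
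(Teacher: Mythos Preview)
Your proposal is correct and follows essentially the same route as the paper: the lemma is stated there as a summary of the preceding analysis, and you have formalized exactly that analysis—invert the monotone $f_i$ for $\alpha\ge\eta_n$ to get \eqref{eq3.3}, and for $\alpha<\eta_n$ split according to $\tau\in(0,A)$ versus $\tau=A$, using the two-branch structure $h_i,k_i$ on $\{j+1,\dots,n\}$ and the collapse $h_i(A)=k_i(A)$ on $\{k+1,\dots,n\}$ to avoid double-counting. There is no genuine gap; your handling of the boundary case $\tau=A$ and the indices $j,k$ matches the paper's setup precisely.
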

As we have seen, analyzing the number of positive solutions of system \eqref{eq3.2} is equivalent to either determining the number of positive solutions of equation \eqref{eq3.3} (when $\alpha \geq \eta_n$) or calculating the sum of positive solutions of equation \eqref{eq3.5} and the number of subsets $J_l$ satisfying equation \eqref{eq3.4} (when $\alpha < \eta_n$). We will accomplish this task through a series of lemmas in the following sections, proving Theorem $\ref{thm:2.1}(\mathrm{a})$, (b), (c), and (d) at appropriate stages.

\begin{lemma}
\label{lem3.3}
If $\alpha \geq \eta_n$, then system \eqref{eq3.2} has a unique positive solution.
\end{lemma}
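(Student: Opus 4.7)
The plan is to invoke Lemma~\ref{lem3.2}(a), which reduces uniqueness for \eqref{eq3.2} to showing that the scalar equation \eqref{eq3.3}, namely $G(\tau):=\alpha\sum_{i=1}^{n}h_{i}(\tau)=\tau$ on $(0,+\infty)$, has exactly one solution. Here each $h_{i}$ is the inverse of $f_{i}(t)=t^{\kappa}+(\alpha-\eta_{i})t$ on $(0,+\infty)$, which is well defined since $\alpha\geq\eta_{n}\geq\eta_{i}$ implies $f_{i}$ is a strictly increasing bijection of $(0,+\infty)$ onto itself. The central observation I would exploit is that $\phi(\tau):=G(\tau)/\tau$ is strictly increasing on $(0,+\infty)$ with $\phi(0^{+})=0$ and $\phi(+\infty)>1$, so the equation $\phi(\tau)=1$ has a unique root.

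To establish strict monotonicity of $\phi$, I would first show that each $h_{i}$ is strictly convex. Since $\kappa\in(0,1)$ and $\alpha\geq\eta_{i}$, we have $f_{i}''(t)=\kappa(\kappa-1)t^{\kappa-2}<0$, so $f_{i}$ is strictly concave. Differentiating $f_{i}(h_{i}(\tau))=\tau$ twice yields $h_{i}''(\tau)=-f_{i}''(h_{i}(\tau))(h_{i}'(\tau))^{3}>0$. Therefore $G$ is strictly convex on $(0,+\infty)$ with $G(0^{+})=0$, and $\phi(\tau)$, viewed as the secant slope of $G$ from $0$ to $\tau$, is strictly increasing in $\tau$.

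For the limits: near $\tau=0$, since $\kappa<1$ the term $t^{\kappa}$ dominates $f_{i}(t)$, so $h_{i}(\tau)\sim\tau^{1/\kappa}$ and $h_{i}(\tau)/\tau\to 0$, giving $\phi(0^{+})=0$. At infinity, if $\alpha>\eta_{i}$ the linear part dominates $f_{i}$ and $h_{i}(\tau)/\tau\to 1/(\alpha-\eta_{i})$; if $\alpha=\eta_{i}$ then $h_{i}(\tau)=\tau^{1/\kappa}$ with $1/\kappa>1$, so $h_{i}(\tau)/\tau\to+\infty$. Under (A1) each $\eta_{i}>0$, so in the case every $\eta_{i}<\alpha$ we obtain $\phi(+\infty)=\sum_{i=1}^{n}\alpha/(\alpha-\eta_{i})>n\geq 2$, while if some $\eta_{i}=\alpha$ the limit is $+\infty$; in either subcase $\phi(+\infty)>1$. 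By continuity and strict monotonicity of $\phi$ there is a unique $\tau_{*}\in(0,+\infty)$ with $\phi(\tau_{*})=1$, and Lemma~\ref{lem3.2}(a) then yields uniqueness for \eqref{eq3.2}. The argument is quite direct once the convexity of $h_{i}$ is in hand; the only mild subtlety is the borderline case $\alpha=\eta_{n}$, which is handled transparently by the separate limit computation above.
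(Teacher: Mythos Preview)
Your argument is correct. Both your proof and the paper's reduce via Lemma~\ref{lem3.2}(a) to the single equation \eqref{eq3.3} and establish the same endpoint behavior of $G(\tau)/\tau$ at $0^{+}$ and $+\infty$. The difference lies in the uniqueness step. The paper computes $G_{1}'(\tau)$ explicitly and, using $\kappa h_{i}^{\kappa-1}<h_{i}^{\kappa-1}$ together with the identity $h_{i}^{\kappa-1}+(\alpha-\eta_{i})=\tau/h_{i}$, derives the differential inequality $G_{1}'(\tau)>G_{1}(\tau)/\tau$, whence $G_{1}'>0$ at every zero. You instead observe that strict concavity of $f_{i}$ forces strict convexity of $h_{i}$, hence of $G$, so the secant slope $\phi(\tau)=G(\tau)/\tau$ from the origin is strictly increasing. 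These are equivalent in content---the paper's inequality is precisely $\phi'>0$---but the packaging differs: your convexity argument is cleaner and more geometric, while the paper's explicit inequality has the practical advantage of being reused verbatim as \eqref{eq3.7} in the proof of Lemma~\ref{lem3.5}, where the domain of $h_{i}$ is restricted and a direct derivative estimate is more convenient than invoking global convexity.
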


\begin{proof}
Since $\alpha \geq \eta_n$, by Lemma \ref{lem3.2}(a), the number of positive solutions of system \eqref{eq3.2} equals the number of solutions of the single equation \eqref{eq3.3}. Given that $0 < \kappa < 1$ and $\lim_{\tau \rightarrow 0^+} \frac{h_i(\tau)}{\tau^{1/\kappa}} = 1$, we can deduce that $\alpha \sum_{i=1}^n h_i(\tau) < \tau$ for sufficiently small positive values of $\tau$.

If $\alpha > \eta_n$, then $\lim_{\tau \rightarrow +\infty} \frac{h_n(\tau)}{\tau} = \frac{1}{\alpha-\eta_n}$. Alternatively, if $\alpha = \eta_n$, then $\lim_{\tau \rightarrow +\infty} \frac{h_n(\tau)}{\tau} = \lim_{\tau \rightarrow +\infty} \tau^{\frac{1}{\kappa}-1} = +\infty$. In either case, we have $\alpha \sum_{i=1}^n h_i(\tau) > \tau$ for sufficiently large positive values of $\tau$. Consequently, equation \eqref{eq3.3} must have at least one solution.

Let us define $G_1(\tau) := \alpha \sum_{i=1}^n h_i(\tau) - \tau$. For $\tau \in (0, +\infty)$, the derivative of $G_1$ is given by:
\[
G_1'(\tau) = \alpha \sum_{i=1}^n h_i'(\tau) - 1 = \alpha \sum_{i=1}^n \frac{1}{\kappa h_i^{\kappa-1}(\tau) + \alpha - \eta_i} - 1.
\]

Since $h_i^{\kappa}(\tau) + (\alpha - \eta_i)h_i(\tau) = \tau$, we can establish that:
\[
G_1'(\tau) > \alpha \sum_{i=1}^n \frac{1}{h_i^{\kappa-1}(\tau) + \alpha - \eta_i} - 1 = \frac{\alpha}{\tau}\sum_{i=1}^n h_i(\tau) - 1 = \frac{1}{\tau}G_1(\tau).
\]

This inequality implies that $G_1'(\tau) > 0$ whenever $G_1(\tau) = 0$. Therefore, equation \eqref{eq3.3} has exactly one solution, which means that system \eqref{eq3.2} has a unique positive solution.
\end{proof}

\begin{lemma}
\label{lem3.4}
For any $\alpha>0$, \eqref{eq3.2} has a positive solution.
\end{lemma}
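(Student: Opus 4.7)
The plan is to apply the intermediate-value theorem to two carefully chosen instances of equation~\eqref{eq3.5}, exploiting the identity $T_n' = T_n'' = T$ recorded just before Lemma~\ref{lem3.2}. Since Lemma~\ref{lem3.3} already treats $\alpha \geq \eta_n$, the real task is the complementary regime $0 < \alpha < \eta_n$, in which Lemma~\ref{lem3.2}(b) reduces the problem to finding a zero of at least one $G_{I_l}(\tau) := \alpha \sum_{i \notin I_l} h_i(\tau) + \alpha \sum_{i \in I_l} k_i(\tau) - \tau$ over $\tau \in (0, A)$, or else a subset $J_l$ realising equality in~\eqref{eq3.4}.

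First I would inspect the empty branch $I_l = \emptyset$: the function $G_{\emptyset}(\tau) = \alpha \sum_{i=1}^n h_i(\tau) - \tau$ on $(0, A]$ satisfies $G_{\emptyset}(\tau) < 0$ for small $\tau > 0$, because the defining relation $h_i(\tau)^{\kappa} + (\alpha - \eta_i) h_i(\tau) = \tau$ forces $h_i(\tau) \sim \tau^{1/\kappa}$, and $1/\kappa > 1$. If $G_{\emptyset}(A) \geq 0$, the intermediate-value theorem immediately yields a zero $\tau^\ast \in (0, A]$, and the tuple $(h_1(\tau^\ast), \ldots, h_n(\tau^\ast), \tau^\ast)$ is the required positive solution of \eqref{eq3.2}.

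If instead $G_{\emptyset}(A) < 0$, I would switch to the singleton branch $I_l = \{n\}$, which is admissible because $\alpha < \eta_n$ makes $k_n$ well-defined on $(0, A]$. The identity $T_n' = T_n'' = T$ then gives
\[
G_{\{n\}}(A) \;=\; \alpha \sum_{i \neq n} T_i' + \alpha T_n'' - A \;=\; \alpha \sum_{i=1}^n T_i' - A \;=\; G_{\emptyset}(A) \;<\; 0,
\]
while $k_n(\tau) \to S_n > 0$ and $h_i(\tau) \to 0$ ($i \neq n$) as $\tau \to 0^+$ give $G_{\{n\}}(0^+) = \alpha S_n > 0$. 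The intermediate-value theorem then supplies a zero of $G_{\{n\}}$ in $(0, A)$, with the $n$-th component of the resulting solution lying in $[T, S_n)$.

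The main obstacle is choosing the right branches among the $2^{n-j}$ candidates: a generic $I_l$ need not yield a sign change at the endpoints of $(0, A]$, and intermediate subsets could in principle leap over the zero set in a purely endpoint-based argument. The crucial point is that flipping the $n$-th coordinate from $h_n$ to $k_n$ preserves the endpoint value $G_{I_l}(A)$ (because $T_n' = T_n''$), so that the dichotomy $G_{\emptyset}(A) \geq 0$ versus $G_{\emptyset}(A) < 0$ is exhaustive, independently of the behaviour of the other components.
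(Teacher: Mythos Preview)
Your proposal is correct and follows essentially the same argument as the paper: reduce to $0<\alpha<\eta_n$ via Lemma~\ref{lem3.3}, test the branch $I_l=\emptyset$ using $G_\emptyset(\tau)<0$ for small $\tau$, and if $G_\emptyset(A)<0$ flip to $I_l=\{n\}$, where $T_n'=T_n''$ gives $G_{\{n\}}(A)=G_\emptyset(A)<0$ while $G_{\{n\}}(0^+)=\alpha S_n>0$. The only cosmetic difference is that the paper separates the boundary case $G_\emptyset(A)=0$ (recording it as $\rho^*\geq 1$) from the strict case $G_\emptyset(A)>0$, whereas you merge them into $G_\emptyset(A)\geq 0$.
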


\begin{proof}
If $\alpha \geq \eta_n$, then the result follows directly from Lemma \ref{lem3.3}. Now let us consider the case where $0 < \alpha < \eta_n$. Under this condition, all functions $h_i: (0, A] \rightarrow (0, T_i']$, for $i = 1, 2, \ldots, n$, are well-defined.

Define $G_1(\tau) := \alpha \sum_{i=1}^n h_i(\tau) - \tau$ for $\tau \in (0, A]$. We analyze three possible cases:

Case 1: If $G_1(A) = 0$, then we immediately have $\rho^* \geq 1$, which means our desired result is established.

Case 2: If $G_1(A) > 0$, then since $G_1(\tau) < 0$ for sufficiently small positive values of $\tau$ (as demonstrated in the proof of Lemma \ref{lem3.3}), by the Intermediate Value Theorem, the equation $G_1(\tau) = 0$ must have at least one solution in the interval $(0, A)$.

Case 3: If $G_1(A) < 0$, we introduce $G_2(\tau) := \alpha \sum_{i=1}^{n-1} h_i(\tau) + \alpha k_n(\tau) - \tau$ for $\tau \in (0, A]$. We note that:
\[
G_2(A) = \alpha \sum_{i=1}^{n-1} h_i(A) + \alpha k_n(A) - A = \alpha \sum_{i=1}^n h_i(A) - A = G_1(A) < 0.
\]
Additionally, we observe that:
\[
\lim_{\tau \rightarrow 0^+} G_2(\tau) = \alpha S_n > 0,
\]

By the Intermediate Value Theorem, the equation $G_2(\tau) = 0$ must have at least one solution in the interval $(0, A)$.

This analysis demonstrates that if $G_1(A) \neq 0$, then for at least one value of $l$, equation \eqref{eq3.5} has a solution. In summary, we have proven that system \eqref{eq3.2} has a positive solution for any $\alpha > 0$.
\end{proof}

\begin{proof}[\bf{Part (a)}] The result follows directly from Lemmas \ref{lem3.1} and \ref{lem3.4}.
\end{proof}
The next lemma demonstrates that if $\eta_{n-1} < \eta_n$, then the region of $\alpha$ for which system \eqref{eq3.2} has a unique positive solution (as established in Lemma \ref{lem3.3}) can be extended.

\begin{lemma}\label{lem3.5}
Assume $\eta_{n-1} < \alpha < \eta_n$ and
\begin{equation}\label{eq3.6}
\frac{1}{\alpha} + \frac{(N-2s)q}{4s(\eta_n-\alpha)} \geq \sum_{i=1}^{n-1} \frac{1}{\eta_n-\eta_i}.
\end{equation}

Then system \eqref{eq3.2} has a unique positive solution.
\end{lemma}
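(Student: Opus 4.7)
The plan is to invoke Lemma~\ref{lem3.2}(b) in the current regime and reduce the problem to counting zeros of two scalar functions, then use convexity and monotonicity. Since $\eta_{n-1} < \alpha < \eta_n$, the integer $j$ of Lemma~\ref{lem3.2}(b) equals $n-1$, and because $\eta_{n-1} < \eta_n$ strictly, also $k = n-1$, so $\{j+1,\dots,k\} = \emptyset$ and $\{j+1,\dots,n\} = \{n\}$. Consequently $\rho^* \in \{0,1\}$ according to whether $G_1(A) := \alpha\sum_{i=1}^n h_i(A) - A$ vanishes, and $\rho^{**}$ counts the zeros on $(0,A)$ of
\[
G_1(\tau) = \alpha\sum_{i=1}^n h_i(\tau) - \tau \quad\text{and}\quad G_2(\tau) = \alpha\sum_{i=1}^{n-1} h_i(\tau) + \alpha k_n(\tau) - \tau.
\]
Since $h_n(A) = k_n(A) = T$, one also has $G_1(A) = G_2(A)$. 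The goal is to show $\rho^* + \rho^{**} = 1$.

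I will handle $G_1$ via convexity: each $h_i$, as the inverse of the strictly concave increasing branch of $f_i$, is strictly convex and strictly increasing with $h_i(0) = 0$ and $h_i'(0^+) = 0$ (because $f_i'(0^+) = +\infty$ since $\kappa - 1 < 0$). Therefore $G_1$ is strictly convex on $[0,A]$ with $G_1(0) = 0$ and $G_1'(0^+) = -1 < 0$, so $G_1$ has at most one zero in $(0,\infty)$.

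The main step, and the place where hypothesis~\eqref{eq3.6} is essential, is to prove $G_2$ is strictly decreasing on $(0,A]$. Differentiating gives
\[
G_2'(\tau) = \alpha\sum_{i=1}^{n-1} h_i'(\tau) + \alpha k_n'(\tau) - 1.
\]
Strict convexity of $h_i$ yields $h_i'(\tau) < h_i'(A^-) = 1/f_i'(T_i')$ for $\tau < A$. The defining inequality $T_i' \leq T = (\kappa/(\eta_n-\alpha))^{1/(1-\kappa)}$ combined with $\kappa - 1 < 0$ forces $\kappa(T_i')^{\kappa-1} \geq \kappa T^{\kappa-1} = \eta_n-\alpha$, whence $f_i'(T_i') \geq \eta_n-\eta_i$ and $h_i'(A^-) \leq 1/(\eta_n-\eta_i)$. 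A direct implicit computation at $\tau = 0^+$ gives $k_n'(0^+) = -1/((1-\kappa)(\eta_n-\alpha))$, and concavity of $k_n$ then yields $k_n'(\tau) \leq k_n'(0^+)$ on $(0,A]$. After noting that $(N-2s)q/(4s) = 1/(1-\kappa)$, hypothesis~\eqref{eq3.6} becomes
\[
\alpha\sum_{i=1}^{n-1} \frac{1}{\eta_n-\eta_i} \leq 1 + \frac{\alpha}{(1-\kappa)(\eta_n-\alpha)},
\]
and the three estimates combine to give $G_2'(\tau) < 0$ strictly on $(0,A)$.

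The proof finishes with a three-way split on the sign of $G_1(A) = G_2(A)$. If $G_1(A) > 0$, strict convexity pins exactly one zero of $G_1$ in $(0,A)$, $\rho^* = 0$, and monotonicity of $G_2$ (with $G_2(0^+) = \alpha S_n > 0$) keeps $G_2 > 0$ on $(0,A]$. If $G_1(A) = 0$, then $\rho^* = 1$, strict convexity makes $G_1 < 0$ on $(0,A)$, and strict monotonicity makes $G_2 > 0$ on $(0,A)$. If $G_1(A) < 0$, convexity gives $G_1 < 0$ on $(0,A]$ with no zero, and $G_2$ strictly decreases from $\alpha S_n > 0$ to $G_1(A) < 0$, providing exactly one zero. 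In each case the total is $1$, so~\eqref{eq3.2} has a unique positive solution. The principal obstacle is clearly the monotonicity of $G_2$: a priori the positive $\sum h_i'(\tau)$ contributions can dominate $\alpha k_n'(\tau) - 1$, and~\eqref{eq3.6} is engineered precisely so that the worst-case bounds $h_i'(A^-) \leq 1/(\eta_n-\eta_i)$ and $k_n'(\tau) \leq k_n'(0^+)$ still force a non-positive sum.
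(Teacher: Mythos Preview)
Your proof is correct and follows essentially the same architecture as the paper's: reduce via Lemma~\ref{lem3.2}(b) to the two scalar functions $G_1$ and $G_2$, show $G_2'<0$ on $(0,A)$ using the identical final inequality $\alpha\sum_{i=1}^{n-1}(\eta_n-\eta_i)^{-1}-\alpha/((1-\kappa)(\eta_n-\alpha))-1\le 0$ coming from \eqref{eq3.6}, and finish with the same three-case split on the sign of $G_1(A)=G_2(A)$. The only differences are in the technical route to the estimates: the paper bounds $h_i'(\tau)$ and $k_n'(\tau)$ by the direct pointwise observations $h_i(\tau)<T$ and $k_n(\tau)<S_n$, whereas you pass through convexity of $h_i$ and concavity of $k_n$ to reach the same endpoints $h_i'(A^-)\le 1/(\eta_n-\eta_i)$ and $k_n'(0^+)=-1/((1-\kappa)(\eta_n-\alpha))$; and for $G_1$ the paper uses the differential inequality $G_1'(\tau)>\tau^{-1}G_1(\tau)$ while you use strict convexity of $G_1$ together with $G_1(0)=0$, $G_1'(0^+)=-1$. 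Both sets of tools yield the same conclusions, so the proofs are interchangeable.
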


\begin{proof}
Let us define functions $G_1(\tau)$ and $G_2(\tau)$ as in Lemma \ref{lem3.4}. Following the analysis from the proof of Lemma \ref{lem3.3}, we have established that
\begin{equation}\label{eq3.7}
G_1'(\tau) > \frac{1}{\tau}G_1(\tau), \quad \tau \in (0,A).
\end{equation}

We note that
\[
G_2'(\tau) = \sum_{i=1}^{n-1} \frac{\alpha}{\kappa h_i^{\kappa-1}(\tau) + \alpha - \eta_i} + \frac{\alpha}{\kappa k_n^{\kappa-1}(\tau) + \alpha - \eta_n} - 1, \quad \tau \in (0,A),
\]
where $\kappa h_i^{\kappa-1}(\tau) + \alpha - \eta_i > 0$ and $\kappa k_n^{\kappa-1}(\tau) + \alpha - \eta_n < 0$.

For $\tau \in (0,A)$, since $h_i(\tau) < h_n(\tau) < T$ for $1 \leq i \leq n-1$ and $k_n^{\kappa-1}(\tau) > \eta_n - \alpha$, and given that $\kappa T^{\kappa-1} + \alpha = \eta_n$, we can apply our assumption to derive:
\begin{equation}\label{eq3.8}
G_2'(\tau) < \sum_{i=1}^{n-1} \frac{\alpha}{\eta_n - \eta_i} - \frac{\alpha}{(1-\kappa)(\eta_n - \alpha)} - 1 \leq 0.
\end{equation}

Since $\eta_{n-1} < \alpha < \eta_n$, there are only two equations of the form \eqref{eq3.5} to consider: $G_1(\tau) = 0$ and $G_2(\tau) = 0$ for $\tau \in (0,A)$. Clearly, $G_1(A) = G_2(A)$. We now analyze three distinct cases:

Case 1: If $G_1(A) = G_2(A) = 0$, then $\rho^* = 1$, and by inequalities \eqref{eq3.7} and \eqref{eq3.8}, neither of the equations $G_1(\tau) = 0$ and $G_2(\tau) = 0$ has a solution in the interval $(0,A)$.

Case 2: If $G_1(A) = G_2(A) > 0$, then $\rho^* = 0$, and by inequalities \eqref{eq3.7} and \eqref{eq3.8} again, the equation $G_1(\tau) = 0$ has exactly one solution in $(0,A)$, while $G_2(\tau) = 0$ has no solution in $(0,A)$.

Case 3: If $G_1(A) = G_2(A) < 0$, then $\rho^* = 0$, and by inequalities \eqref{eq3.7} and \eqref{eq3.8} once more, the equation $G_1(\tau) = 0$ has no solution in $(0,A)$, while $G_2(\tau) = 0$ has exactly one solution in $(0,A)$.

Therefore, by Lemma \ref{lem3.2}(b), system \eqref{eq3.2} has a unique positive solution.
\end{proof}

Note that assumption \eqref{eq3.6} is satisfied when $\alpha < \eta_n$ and $\alpha$ is sufficiently close to $\eta_n$, specifically when
\[
\eta_n - \frac{(N-2s)q}{4s}\left(\sum_{i=1}^{n-1} (\eta_n - \eta_i)^{-1}\right)^{-1} \leq \alpha < \eta_n.
\]
\begin{proof}[\bf{Part (b)}] The result follows directly from Lemmas \ref{lem3.1}, \ref{lem3.3}, and \ref{lem3.5}.
\end{proof}
The following lemma establishes a multiplicity result for positive solutions of system \eqref{eq3.2}.

\begin{lemma}\label{lem3.6}
If $\alpha<\eta_{1}$ and
\begin{equation}\label{eq3.9}
\alpha\left(\eta_{n}-\alpha\right)^{\frac{(N-2s)(2-p)}{4s}} \sum_{i=1}^{n}\left(\eta_{i}-\alpha\right)^{-\frac{(N-2s) q}{4s}} \leq\left(\frac{2-p}{q}\right)^{\frac{(N-2s)(2-p)}{4s}}-\left(\frac{2-p}{q}\right)^{\frac{(N-2s) q}{4s}},
\end{equation}
then system \eqref{eq3.2} has at least $2^{n}-1$ positive solutions.
\end{lemma}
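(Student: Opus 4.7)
The plan is to invoke Lemma~\ref{lem3.2}(b). Since $\alpha<\eta_{1}\leq \eta_{i}$ for every $i$, we are in the regime $j=0$, so all the functions $h_{i},k_{i}:(0,A]\to\mathbb{R}$ are defined and every one of the $2^{n}$ subsets $I_{l}\subseteq\{1,\ldots,n\}$ enters the count $\rho^{**}$ through equation~\eqref{eq3.5}. My strategy is to produce, for each of the $2^{n}-1$ nonempty subsets $I_{l}$, at least one root $\tau_{l}\in(0,A)$ of~\eqref{eq3.5}. Because for $\tau,\tau'\in(0,A)$ the two branches satisfy the strict separation $h_{i}(\tau)<T<k_{i}(\tau')$ (which follows from strict monotonicity of $f_{i}$ on each branch, since $\tau<A$ forces $h_{i}(\tau)<h_{i}(A)=T_{i}'\leq T$ and $k_{i}(\tau')>k_{i}(A)=T_{i}''\geq T$), distinct subsets $I_{l}$ then produce distinct positive solutions of~\eqref{eq3.2}.

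First I would recast hypothesis~\eqref{eq3.9} into a cleaner form. Using $\tfrac{(N-2s)(2-p)}{4s}=\tfrac{\kappa}{1-\kappa}$ and $\tfrac{(N-2s)q}{4s}=\tfrac{1}{1-\kappa}$ together with the explicit expressions for $A$ and $S_{i}$ displayed in the excerpt, condition~\eqref{eq3.9} is equivalent to the compact inequality
\[
\alpha\sum_{i=1}^{n}S_{i}\ \leq\ A.
\]

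Next, for each nonempty $I_{l}$, set
\[
G_{l}(\tau):=\alpha\sum_{i\in I\setminus I_{l}}h_{i}(\tau)+\alpha\sum_{i\in I_{l}}k_{i}(\tau)-\tau,\qquad \tau\in(0,A].
\]
The endpoint analysis is the heart of the argument. As $\tau\to 0^{+}$, $h_{i}(\tau)\to 0$ while $k_{i}(\tau)\to S_{i}$ (because $f_{i}(S_{i})=0$ and $f_{i}$ is strictly decreasing on $[T_{i}'',S_{i})$), hence
\[
\lim_{\tau\to 0^{+}}G_{l}(\tau)=\alpha\sum_{i\in I_{l}}S_{i}>0.
\]
At the right endpoint $\tau=A$, one has $h_{i}(A)=T_{i}'$, $k_{i}(A)=T_{i}''$, with $T_{i}'\leq T_{i}''$ and the strict bound $T_{i}''<S_{i}$ (since $f_{i}(T_{i}'')=A>0=f_{i}(S_{i})$). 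This gives
\[
G_{l}(A)\ \leq\ \alpha\sum_{i=1}^{n}T_{i}''-A\ <\ \alpha\sum_{i=1}^{n}S_{i}-A\ \leq\ 0,
\]
the last inequality being~\eqref{eq3.9} in its rewritten form. Continuity of $G_{l}$ and the intermediate value theorem then yield the required root $\tau_{l}\in(0,A)$.

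Finally, the $2^{n}-1$ triples $(t_{1}^{(l)},\ldots,t_{n}^{(l)},\tau_{l})$ with $t_{i}^{(l)}=h_{i}(\tau_{l})$ for $i\notin I_{l}$ and $t_{i}^{(l)}=k_{i}(\tau_{l})$ for $i\in I_{l}$ are pairwise distinct positive solutions of the expanded system~\eqref{eq3.2}: if $I_{l}\neq I_{l'}$ and $i$ lies in their symmetric difference, the components $t_{i}$ lie on opposite sides of $T$ by the strict separation above, so the two triples cannot coincide. The main technical point is the strict bound $T_{i}''<S_{i}$, which upgrades the non-strict hypothesis~\eqref{eq3.9} to the strict sign $G_{l}(A)<0$ needed to apply the IVT; everything else is bookkeeping on the structure already supplied by Lemma~\ref{lem3.2}(b).
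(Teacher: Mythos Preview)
Your proof is correct and follows essentially the same approach as the paper: both define $G_{l}(\tau)$ for each nonempty subset $I_{l}$, bound $G_{l}(A)\le \alpha\sum_{i}k_{i}(A)-A<\alpha\sum_{i}S_{i}-A\le 0$ via \eqref{eq3.9}, observe $\lim_{\tau\to 0^{+}}G_{l}(\tau)=\alpha\sum_{i\in I_{l}}S_{i}>0$, and conclude by the Intermediate Value Theorem. Your explicit rewriting of \eqref{eq3.9} as $\alpha\sum_{i}S_{i}\le A$ and your justification that distinct $I_{l}$ yield distinct solutions (via the strict separation $h_{i}(\tau)<T<k_{i}(\tau')$ on $(0,A)$) are more detailed than the paper's treatment, but the argument is the same.
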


\begin{proof}
Consider the $2^{n}$ subsets $I_{1}, I_{2}, \ldots, I_{2^{n}}$ of the index set $I=\{1,2,\ldots,n\}$. Without loss of generality, we assign $I_{1}=\emptyset$, which implies that $I_{l} \neq \emptyset$ for all $l \neq 1$. For each $l=1,2,\ldots,2^{n}$, we define the function:
\[
G_{l}(\tau)=\alpha \sum_{i \in I \backslash I_{l}} h_{i}(\tau)+\alpha \sum_{i \in I_{l}} k_{i}(\tau)-\tau, \quad \tau \in(0, A].
\]

Since $\alpha<\eta_{1} \leq \eta_{2} \leq \cdots \leq \eta_{n}$, for all $\tau \in(0, A]$ we have $h_{1}(\tau) \leq h_{2}(\tau) \leq \cdots \leq h_{n}(\tau) \leq T$ and $k_{1}(\tau) \geq k_{2}(\tau) \geq \cdots \geq k_{n}(\tau) \geq T$. Therefore, for any index $l$, applying inequality \eqref{eq3.9}, we obtain:
\[
G_{l}(A) \leq \alpha \sum_{i=1}^{n} k_{i}(A)-A<\alpha \sum_{i=1}^{n} S_{i}-A \leq 0.
\]

For each $l$ where $2 \leq l \leq 2^{n}$, since $I_{l} \neq \emptyset$, we have:
\[
\lim _{\tau \rightarrow 0^{+}} G_{l}(\tau)=\alpha \sum_{i \in I_{l}} S_{i}>0.
\]

By the Intermediate Value Theorem, for each $l=2,3,\ldots,2^{n}$, the equation $G_{l}(\tau)=0$ has at least one solution in the interval $(0, A)$. This means that for each $l=2,3,\ldots,2^{n}$, equation \eqref{eq3.5} has at least one solution, so $\rho_{l} \geq 1$. 

Furthermore, for indices $l_{1} \neq l_{2}$, the two equations $G_{l_{1}}(\tau)=0$ and $G_{l_{2}}(\tau)=0$ yield different solutions of system \eqref{eq3.2}. Therefore, system \eqref{eq3.2} has at least $2^{n}-1$ positive solutions.

It should be noted that assumption \eqref{eq3.9} is satisfied when $\alpha>0$ is sufficiently small. In the following two lemmas, we will present two different types of conditions, either of which guarantees that system \eqref{eq3.2} has exactly $2^{n}-1$ positive solutions.
\end{proof}

\begin{lemma}\label{lem3.7}
If $\alpha<\eta_{1}$ and there exists $\xi \in(0,1)$ such that
\begin{equation}\label{eq3.10}
\alpha\left(\eta_{n}-\alpha\right)^{\frac{(N-2s) q}{4s}-1} \sum_{i=1}^{n}\left(\eta_{i}-\alpha\right)^{-\frac{(N-2s) q}{4s}} \leq \xi\left(\left(\frac{2-p}{q}\right)^{\frac{(N-2s) q}{4s}-1}-\left(\frac{2-p}{q}\right)^{\frac{(N-2s) q}{4s}}\right),
\end{equation}
and
\begin{equation}\label{eq3.11}
\alpha\left(\frac{n-1}{\xi^{-4s /[(N-2s)(2-p)]}-1}-\frac{(N-2s)(2-p)}{4s}\right) \leq \eta_{n},
\end{equation}
then system \eqref{eq3.2} has exactly $2^{n}-1$ positive solutions.
\end{lemma}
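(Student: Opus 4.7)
The plan is to combine Lemma \ref{lem3.6} (which already yields at least $2^{n}-1$ positive solutions under the weaker hypothesis \eqref{eq3.9}) with a matching upper bound obtained by separately analyzing the three contributions $\rho^{*}$, $\rho_{1}$ and $\rho_{l}$ $(l\ge 2)$ to the count from Lemma \ref{lem3.2}. Writing $\mu=(N-2s)q/(4s)=1/(1-\kappa)$ and $\nu=(N-2s)(2-p)/(4s)=\kappa/(1-\kappa)$, one has $\mu-1=\nu$ and $\kappa^{\mu-1}-\kappa^{\mu}=\kappa^{\nu}-\kappa^{\mu}$, so that for $\xi\in(0,1)$ the hypothesis \eqref{eq3.10} is strictly stronger than \eqref{eq3.9}. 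Therefore Lemma \ref{lem3.6} delivers $\rho_{l}\ge 1$ for every $l=2,\ldots,2^{n}$, and the lower bound of $2^{n}-1$ positive solutions is already in hand.

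Next I would rule out the boundary contribution $\rho^{*}$ and the all-ascending pattern $\rho_{1}$. With $S_{i}=(\eta_{i}-\alpha)^{-\mu}$, the hypothesis \eqref{eq3.10} is equivalent to $\alpha\sum_{i}S_{i}\le\xi A$; since $u_{i}(A)\in\{T_{i}',T_{i}''\}$ and both values lie strictly in $(0,S_{i})$, we obtain $G_{l}(A)<\alpha\sum_{i}S_{i}-A\le(\xi-1)A<0$ for every pattern, which gives $\rho^{*}=0$. For $G_{1}(\tau)=\alpha\sum_{i}h_{i}(\tau)-\tau$, the bound $h_{i}(\tau)\le T_{i}'\le T<S_{i}$ forces $h_{i}^{\kappa-1}>\eta_{i}-\alpha$, so that both $\kappa h_{i}^{\kappa-1}+\alpha-\eta_{i}$ and $h_{i}^{\kappa-1}+\alpha-\eta_{i}=\tau/h_{i}$ are strictly positive; the calculation of Lemma \ref{lem3.3} then applies verbatim and yields $G_{1}'(\tau)>G_{1}(\tau)/\tau$ on $(0,A)$. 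Together with $G_{1}(\tau)\to 0^{-}$ as $\tau\to 0^{+}$ and $G_{1}(A)<0$, any zero of $G_{1}$ would force $G_{1}'>0$ there and hence produce a second zero at which $G_{1}'\le 0$, a contradiction. Hence $\rho_{1}=0$.

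The heart of the proof is to establish $\rho_{l}\le 1$ for each $l\ge 2$; together with $\rho_{l}\ge 1$ from Lemma \ref{lem3.6} this gives $\rho_{l}=1$. Suppose $G_{l}(\tau^{*})=0$; from $u_{i}(\tau^{*})<S_{i}$ and \eqref{eq3.10} one immediately obtains the key localization
\[
\tau^{*}=\alpha\sum_{i}u_{i}(\tau^{*})<\alpha\sum_{i}S_{i}\le\xi A,
\]
so any zero of $G_{l}$ lies in $(0,\xi A)$. On this interval $h_{i}'(\tau)$ is monotone increasing in $\tau$ (because $h_{i}$ is increasing in $\tau$ and $\kappa-1<0$), and the comparison $h_{i}(\tau)\le h_{n}(\tau)$ together with $\alpha-\eta_{i}\ge\alpha-\eta_{n}$ gives $h_{i}'(\tau)\le h_{n}'(\tau)$ for every $i$. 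Writing $\theta_{*}=h_{n}(\xi A)/T\in(0,1)$, the defining equation is $\theta_{*}^{\kappa}-\kappa\theta_{*}=\xi(1-\kappa)$, and using $T^{\kappa-1}=(\eta_{n}-\alpha)/\kappa$ one computes
\[
h_{n}'(\xi A)=\frac{1}{(\eta_{n}-\alpha)(\theta_{*}^{\kappa-1}-1)}.
\]
Condition \eqref{eq3.11} is calibrated precisely so that, after an elementary estimate on $\theta_{*}$ in terms of $\xi$, one obtains $\alpha(n-1)h_{n}'(\xi A)\le 1$, and consequently
\[
\alpha\sum_{i\notin I_{l}}h_{i}'(\tau^{*})\le\alpha(n-1)h_{n}'(\xi A)\le 1.
\]
Since $|I_{l}|\ge 1$ and $k_{i}'(\tau^{*})<0$ strictly for $i\in I_{l}$, we conclude $G_{l}'(\tau^{*})<0$. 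A $C^{1}$ function that is strictly decreasing at every zero, positive near $\tau=0^{+}$, and negative at $\tau=A$ vanishes at most once in $(0,A)$, so $\rho_{l}=1$.

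Assembling the three contributions gives $\rho^{*}+\rho_{1}+\sum_{l\ge 2}\rho_{l}=0+0+(2^{n}-1)=2^{n}-1$ positive solutions of \eqref{eq3.2}, and Lemma \ref{lem3.1} transfers this count to the synchronized positive solutions of \eqref{eq:1.3b}. The main obstacle is the highlighted step in the third paragraph: converting the implicitly defined quantity $\theta_{*}^{\kappa-1}-1$ into an explicit lower bound in $\xi$ that matches \eqref{eq3.11} on the nose, especially because the indices with $\eta_{i}=\eta_{n}$ drive $h_{i}'$ to infinity as $\tau\to A$ and are tamed only through the strict cutoff $\tau^{*}<\xi A$. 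All other steps are direct adaptations of the techniques developed in Lemmas \ref{lem3.3}--\ref{lem3.6}, and the overall scheme follows the pattern already established for the simpler uniqueness results.
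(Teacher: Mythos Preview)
Your overall architecture is exactly that of the paper: show $\rho^{*}=0$ and $\rho_{1}=0$, localize every zero of $G_{l}$ ($l\ge 2$) into $(0,\xi A)$ via \eqref{eq3.10}, and then prove $G_{l}'<0$ on that interval. The only substantive issue is your claim that \eqref{eq3.11} ``is calibrated precisely so that $\alpha(n-1)h_{n}'(\xi A)\le 1$''. This is not what \eqref{eq3.11} gives, and your argument as written does not close.

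Indeed, your own estimate $\theta_{*}<\xi^{1/\kappa}$ (equivalently $h_{n}(\tau)<(\tau/(1-\kappa))^{1/\kappa}$, valid for all $\tau<A$) yields
\[
\alpha(n-1)\,h_{n}'(\tau)<\frac{(n-1)\alpha}{(\eta_{n}-\alpha)\bigl(\xi^{(\kappa-1)/\kappa}-1\bigr)},
\]
and translating \eqref{eq3.11} with $\nu=\kappa/(1-\kappa)$ gives
\[
\frac{(n-1)\alpha}{\xi^{(\kappa-1)/\kappa}-1}\;\le\;\eta_{n}+\frac{\alpha\kappa}{1-\kappa},
\]
so that the right-hand side above is only bounded by $1+\dfrac{\alpha}{(1-\kappa)(\eta_{n}-\alpha)}$, which is strictly larger than $1$. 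Hence \eqref{eq3.11} does \emph{not} force $\alpha(n-1)h_{n}'\le 1$; the extra term $-\,(N-2s)(2-p)/(4s)=-\kappa/(1-\kappa)$ in \eqref{eq3.11} is there precisely to absorb a quantitative contribution from the $k_{i}'$ terms that you discarded. The paper keeps one such term: from $-(1-\kappa)(\eta_{n}-\alpha)<\kappa k_{i}^{\kappa-1}(\tau)+\alpha-\eta_{i}<0$ one gets $\alpha k_{i}'(\tau)<-\alpha/[(1-\kappa)(\eta_{n}-\alpha)]$ for at least one $i\in I_{l}$, and then
\[
G_{l}'(\tau)<\frac{(n-1)\alpha}{(\eta_{n}-\alpha)\bigl(\xi^{(\kappa-1)/\kappa}-1\bigr)}-\frac{\alpha}{(1-\kappa)(\eta_{n}-\alpha)}-1\le 0
\]
is exactly \eqref{eq3.11}. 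Once you reinstate this quantitative bound on $k_{i}'$ instead of merely using $k_{i}'<0$, your argument becomes the paper's; every other step you outline (including the handling of $G_{1}$ and of $\rho^{*}$) matches the paper's proof.
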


\begin{proof}
Let $G_{l}$ be defined as in the proof of Lemma \ref{lem3.6}. For values $\tau \in[\xi A, A]$, applying assumption \eqref{eq3.10}, we obtain:
\[
\alpha \sum_{i=1}^{n} k_{i}(\tau)<\alpha \sum_{i=1}^{n} S_{i} \leq \xi A \leq \tau.
\]

This inequality implies that for any $l=1,2,\ldots,2^{n}$, the equation $G_{l}(\tau)=0$ has no solution in the interval $[\xi A, A]$. Furthermore, the equation $G_{1}(\tau) = \alpha \sum_{i=1}^{n} h_{i}(\tau)-\tau = 0$ has no solution in $(0, A]$ because $G_{1}(\tau)<0$ for sufficiently small positive values of $\tau$, $G_{1}(A)<0$, and $G_{1}^{\prime}(\tau)>\frac{1}{\tau} G_{1}(\tau)$ for all $\tau \in(0, A)$.

Now consider the case where $\tau \in(0, \xi A)$. For indices $l=2,3,\ldots,2^{n}$, we have $I_{l} \neq \emptyset$ and:
\[
G_{l}^{\prime}(\tau)=\alpha \sum_{i \in I \backslash I_{l}} \frac{1}{\kappa h_{i}^{\kappa-1}(\tau)+\alpha-\eta_{i}}+\alpha \sum_{i \in I_{l}} \frac{1}{\kappa k_{i}^{\kappa-1}(\tau)+\alpha-\eta_{i}}-1.
\]
We observe that $\kappa h_{i}^{\kappa-1}(\tau)+\alpha-\eta_{i} \geq \kappa h_{n}^{\kappa-1}(\tau)+\alpha-\eta_{n}>0$ for all $i \in I \backslash I_{l}$, and $0>\kappa k_{i}^{\kappa-1}(\tau)+\alpha-\eta_{i}> -(1-\kappa)(\eta_{n}-\alpha)$ for all $i \in I_{l}$. Applying these estimates to the formula for $G_{l}^{\prime}(\tau)$ yields:
\[
G_{l}^{\prime}(\tau)<\frac{(n-1)\alpha}{\kappa h_{n}^{\kappa-1}(\tau)+\alpha-\eta_{n}}-\frac{\alpha}{(1-\kappa)(\eta_{n}-\alpha)}-1,
\]
since the first summation in the expression for $G_{l}^{\prime}(\tau)$ contains at most $n-1$ terms.

Using the fact that $\kappa h_{n}^{\kappa-1}(\tau)+\alpha-\eta_{n}>0$ and $h_{n}^{\kappa}(\tau)+(\alpha-\eta_{n})h_{n}(\tau)=\tau$, we can deduce that for $\tau \in(0, \xi A)$, we have $h_{n}(\tau)<\left(\frac{\tau}{1-\kappa}\right)^{1/\kappa}<\left(\frac{\xi A}{1-\kappa}\right)^{1/\kappa}$. Then, utilizing the expression for $A$, we obtain $\kappa h_{n}^{\kappa-1}(\tau)+\alpha-\eta_{n}>(\xi^{\frac{\kappa-1}{\kappa}}-1)(\eta_{n}-\alpha)>0$. 

Applying assumption \eqref{eq3.11}, we can conclude that for $\tau \in(0, \xi A)$ and $l=2,3,\ldots,2^{n}$:
\[
G_{l}^{\prime}(\tau)<\frac{(n-1)\alpha}{(\xi^{\frac{\kappa-1}{\kappa}}-1)(\eta_{n}-\alpha)}-\frac{\alpha}{(1-\kappa)(\eta_{n}-\alpha)}-1<0.
\]

Since $G_{l}(\xi A)<0$ and $\lim_{\tau \rightarrow 0^{+}}G_{l}(\tau)>0$, by the Mean Value Theorem and the strict negativity of $G_{l}^{\prime}(\tau)$, each equation $G_{l}(\tau)=0$ has exactly one solution in the interval $(0, \xi A)$. This establishes that system \eqref{eq3.2} has exactly $2^{n}-1$ positive solutions.
\end{proof}

The assumptions of Lemma \ref{lem3.7} are satisfied for sufficiently small values of $\alpha>0$, and particularly for $\alpha>0$ such that $\alpha \leq \frac{1}{2}\eta_{1}$, $\alpha \leq \frac{2^{4s/[(N-2s)(2-p)]}-1}{n-1}\eta_{n}$, and
\[
\alpha \leq \frac{2s}{(N-2s)(2-p)}\left(\frac{2-p}{q}\right)^{\frac{(N-2s)q}{4s}}\eta_{n}^{1-\frac{(N-2s)q}{4s}}\left(\sum_{i=1}^{n}\left(\eta_{i}-\frac{1}{2}\eta_{1}\right)^{-\frac{(N-2s)q}{4s}}\right)^{-1}.
\]

\begin{lemma}\label{lem3.8}
If $\alpha<\eta_{1}$,
\begin{equation}\label{eq3.12}
    \alpha\left(\eta_{n}-\alpha\right)^{\frac{(N-2s) q}{4s}-1} \sum_{i=1}^{n}\left(\eta_{i}-\alpha\right)^{-\frac{(N-2s) q}{4s}}<\left(\frac{2-p}{q}\right)^{\frac{(N-2s) q}{4s}-1}-\left(\frac{2-p}{q}\right)^{\frac{(N-2s) q}{4s}},
\end{equation}
and
\begin{equation}\label{eq3.13}
\sum_{i=2}^{n} \frac{\alpha}{\chi_{i}(\alpha)}-\frac{(N-2s) q \alpha}{4s\left(\eta_{n}-\alpha\right)} \leq 1,
\end{equation}

where
\[
\chi_{i}(\alpha):=\frac{2-p}{q}\left(\frac{4s}{(N-2s) q}\right)^{\frac{4s}{(N-2s)(2-p)}}\left[\alpha \sum_{j=1}^{n}\left(\eta_{j}-\alpha\right)^{-\frac{(N-2s) q}{4s}}\right]^{-\frac{4s}{(N-2s)(2-p)}}-\left(\eta_{i}-\alpha\right),
\]
then system \eqref{eq3.2} has exactly $2^{n}-1$ positive solutions.
\end{lemma}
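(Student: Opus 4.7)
The plan is to upgrade the existence count of Lemma~\ref{lem3.6} to an exact count. Hypothesis~\eqref{eq3.12} is a strict strengthening of~\eqref{eq3.9}: after the substitution $(N-2s)q/(4s)=1/(1-\kappa)$, both reduce to $\alpha\sum_{j=1}^n S_j < A$. Consequently Lemma~\ref{lem3.6} already delivers at least $2^n-1$ positive solutions of~\eqref{eq3.2}, and by Lemma~\ref{lem3.1} the same is true for~\eqref{eq:1.3b}. It remains to cap the total at $2^n-1$. Invoking Lemma~\ref{lem3.2}(b) with $j=0$ (since $\alpha<\eta_1$), this amounts to showing $\rho^*=0$ together with $\rho_l \le 1$ for each of the $2^n$ subsets $I_l\subseteq I:=\{1,\ldots,n\}$, where
\[
G_l(\tau) := \alpha\sum_{i\in I\setminus I_l} h_i(\tau) + \alpha\sum_{i\in I_l} k_i(\tau) - \tau, \qquad \tau\in(0,A].
\]
Strict~\eqref{eq3.12} yields $G_l(A) \le \alpha\sum_j S_j - A < 0$ uniformly in $l$, immediately giving $\rho^*=0$.

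The empty subset $I_1=\emptyset$ is ruled out through the differential inequality $G_1'(\tau) > G_1(\tau)/\tau$ on $(0,A)$, established in Lemma~\ref{lem3.3}. Combined with $G_1(0^+)<0$ and $G_1(A)<0$, this forbids any zero of $G_1$ in $(0,A]$: a zero would force $G_1'>0$ there and so obstruct the subsequent return to the negative boundary value without creating a second zero, at which $G_1'\le 0$ would contradict the differential inequality.

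For each nonempty $I_l$ (so $l=2,\ldots,2^n$), existence of a zero in $(0,A)$ is immediate from the IVT using $G_l(0^+)=\alpha\sum_{i\in I_l} S_i > 0$ and $G_l(A)<0$. Uniqueness will follow from $G_l'(\tau_0)<0$ at every zero $\tau_0$. Differentiating,
\[
G_l'(\tau) = \alpha\sum_{i\in I\setminus I_l}\frac{1}{\kappa h_i^{\kappa-1}(\tau)+\alpha-\eta_i} + \alpha\sum_{i\in I_l}\frac{1}{\kappa k_i^{\kappa-1}(\tau)+\alpha-\eta_i} - 1.
\]
The denominators in the second sum lie in $\bigl(-(1-\kappa)(\eta_n-\alpha),\,0\bigr)$ because $k_i \in (T,S_i)$ and $\kappa T^{\kappa-1}+\alpha-\eta_n = 0$, so each such term is bounded above by $-\alpha/[(1-\kappa)(\eta_n-\alpha)]=-(N-2s)q\alpha/[4s(\eta_n-\alpha)]$; nonemptiness of $I_l$ captures this negative contribution at least once. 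For the first sum, the plan is to prove the pointwise bound
\[
\kappa h_i^{\kappa-1}(\tau_0)+\alpha-\eta_i \ge \chi_i(\alpha), \qquad i=2,\ldots,n,
\]
together with the monotonicity $h_1(\tau_0)\le h_2(\tau_0)$ and $\eta_1\le\eta_2$, which absorbs the $i=1$ term into the $i=2$ bound. Plugging both estimates into $G_l'(\tau_0)$ and invoking~\eqref{eq3.13} then yields $G_l'(\tau_0)\le\sum_{i=2}^n\alpha/\chi_i(\alpha) - (N-2s)q\alpha/[4s(\eta_n-\alpha)]-1 \le 0$, strict by~\eqref{eq3.12}, whence $\rho_l=1$ and the total count is exactly $2^n-1$.

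The principal obstacle is the pointwise bound $\kappa h_i^{\kappa-1}(\tau_0)+\alpha-\eta_i\ge\chi_i(\alpha)$. The quantity $\chi_i(\alpha)$ is precisely the value of $\kappa h^{\kappa-1}+\alpha-\eta_i$ at $h=(1-\kappa)^{-1/\kappa}\bigl(\alpha\sum_j S_j\bigr)^{1/\kappa}$, as can be verified using the identities $(N-2s)q/(4s)=1/(1-\kappa)$ and $4s/[(N-2s)(2-p)]=(1-\kappa)/\kappa$. Via the monotonicity $h_i(\tau_0)\le h_n(\tau_0)$ and the fact that $\kappa h^{\kappa-1}$ is decreasing, the required inequality reduces to an explicit upper bound on $h_n(\tau_0)$ of the same form. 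This upper bound must be extracted from the zero constraint $\tau_0 = \alpha\sum_{i\notin I_l}h_i(\tau_0)+\alpha\sum_{i\in I_l}k_i(\tau_0) \le \alpha\sum_j S_j$ together with the identity $h_n^\kappa(\tau_0)=\tau_0+(\eta_n-\alpha)h_n(\tau_0)$; the delicate step is converting this implicit inequality into the explicit closed form that reproduces $\chi_i(\alpha)$ uniformly over nonempty $I_l$, which is precisely what the particular form of hypothesis~\eqref{eq3.13} is designed to absorb.
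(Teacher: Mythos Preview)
Your proposal is correct and tracks the paper's argument closely. The step you flag as delicate is in fact a one-line consequence of $f_i'(h_i(\tau))>0$ on the increasing branch: from $h_i^{\kappa}=\tau+(\eta_i-\alpha)h_i<\tau+\kappa h_i^{\kappa}$ one reads off $h_i(\tau_0)<\bigl(\tau_0/(1-\kappa)\bigr)^{1/\kappa}$, and then $\tau_0<\alpha\sum_j S_j$ gives exactly the lower bound $\chi_i(\alpha)$ for $\kappa h_i^{\kappa-1}+\alpha-\eta_i$; hypothesis~\eqref{eq3.13} enters only afterward, to close the derivative estimate, and the strictness of $G_l'(\tau_0)<0$ comes from the $k_i$-branch bound rather than from~\eqref{eq3.12}.
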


\begin{proof}
Let $G_{l}$ be defined as in Lemma \ref{lem3.6}. For indices $i \leq j$ and $\tau \in(0, A)$, we have $\kappa h_{i}^{\kappa-1}(\tau)+\alpha-\eta_{i} \geq \kappa h_{j}^{\kappa-1}(\tau)+\alpha-\eta_{j}>0$ and $-(1-\kappa)(\eta_{n}-\alpha)<\kappa k_{i}^{\kappa-1}(\tau)+\alpha-\eta_{i}<0$. Consequently, for $l=2,3,\ldots,2^{n}$ and $\tau \in(0, A)$:
\[
G_{l}^{\prime}(\tau)<\alpha \sum_{i=2}^{n} \frac{1}{\kappa h_{i}^{\kappa-1}(\tau)+\alpha-\eta_{i}}-\frac{\alpha}{(1-\kappa)\left(\eta_{n}-\alpha\right)}-1.
\]

For any index $l$, using assumption \eqref{eq3.12}, we have:
\[
G_{l}(A) \leq \alpha \sum_{i=1}^{n} k_{i}(A)-A=\alpha \sum_{i=1}^{n} T_{i}^{\prime \prime}-A<\alpha \sum_{i=1}^{n} S_{i}-A<0.
\]

This specifically implies that the equation $G_{1}(\tau)=0$ has no solution in the interval $(0, A]$, while for each $l=2,3,\ldots,2^{n}$, the equation $G_{l}(\tau)=0$ has at least one solution in $(0, A)$. For any such $l$, if $\tau \in(0, A)$ is a solution of $G_{l}(\tau)=0$, then:
\[
\tau=\alpha \sum_{i \in I \backslash I_{l}} h_{i}(\tau)+\alpha \sum_{i \in I_{l}} k_{i}(\tau)<\alpha \sum_{i=1}^{n} S_{i}=\alpha \sum_{i=1}^{n}\left(\eta_{i}-\alpha\right)^{-\frac{1}{1-\kappa}}.
\]

Since $\kappa h_{i}^{\kappa-1}(\tau)-(\eta_{i}-\alpha)>0$ and $h_{i}^{\kappa}(\tau)-(\eta_{i}-\alpha) h_{i}(\tau)=\tau$, we can derive:
\[
h_{i}(\tau)<\left(\frac{\tau}{1-\kappa}\right)^{1/\kappa}<(1-\kappa)^{-1/\kappa}\left(\alpha \sum_{j=1}^{n}\left(\eta_{j}-\alpha\right)^{-\frac{1}{1-\kappa}}\right)^{1/\kappa}.
\]

This estimate leads to $\kappa h_{i}^{\kappa-1}(\tau)-(\eta_{i}-\alpha)>\chi_{i}(\alpha)$. By applying assumption \eqref{eq3.12} again:
\[
\chi_{i}(\alpha) \geq \kappa(1-\kappa)^{\frac{1-\kappa}{\kappa}}\left[\alpha \sum_{j=1}^{n}\left(\eta_{j}-\alpha\right)^{-\frac{1}{1-\kappa}}\right]^{-\frac{1-\kappa}{\kappa}}-\left(\eta_{n}-\alpha\right)>0.
\]

Therefore, we have $\kappa h_{i}^{\kappa-1}(\tau)-(\eta_{i}-\alpha)>\chi_{i}(\alpha)>0$. Now, for $l=2,3,\ldots,2^{n}$ and $\tau \in(0, A)$, if $G_{l}(\tau)=0$, then using condition \eqref{eq3.13}:
\[
G_{l}^{\prime}(\tau)<\sum_{i=2}^{n} \frac{\alpha}{\chi_{i}(\alpha)}-\frac{\alpha}{(1-\kappa)\left(\eta_{n}-\alpha\right)}-1 \leq 0.
\]

This proves that for each $l=2,3,\ldots,2^{n}$, there exists exactly one value of $\tau \in(0, A)$ such that $G_{l}(\tau)=0$, and consequently, system \eqref{eq3.2} has exactly $2^{n}-1$ positive solutions.
\end{proof}
The conditions of this lemma are satisfied in particular for $\alpha>0$ such that $\alpha<\frac{1}{2}\eta_{1}$, $\alpha \leq \frac{1}{n-1}\eta_{n}$, and
\[
\alpha \leq \frac{4s}{(N-2s)(2-p)}\left(\frac{2-p}{q}\right)^{\frac{(N-2s)q}{4s}}\left(2\eta_{n}\right)^{1-\frac{(N-2s)q}{4s}}\left(\sum_{i=1}^{n}\left(\eta_{i}-\frac{1}{2}\eta_{1}\right)^{-\frac{(N-2s)q}{4s}}\right)^{-1}.
\]
since, according to this last inequality, we can verify that $\chi_{i}(\alpha) \geq \eta_{n}$ for all indices $i$.

Under the assumptions of Lemmas \ref{lem3.7} and \ref{lem3.8}, we have established that for $l=2,3,\ldots,2^{n}$ and $\tau \in(0, A)$, $G_{l}^{\prime}(\tau)<0$ whenever $G_{l}(\tau)=0$. The uniqueness of solutions to the equation $G_{l}(\tau)=0$ is a direct consequence of this property. It is important to note that it is generally impossible to have $G_{l}^{\prime}(\tau)<0$ for all $\tau \in(0, A)$. Indeed, if $n \in I \backslash I_{l}$ and $\eta_{n-1}<\eta_{n}$, then $\lim_{\tau \rightarrow A^{-}} G_{l}^{\prime}(\tau)=+\infty$, since as $\tau$ approaches $A$ from the left, the term $\alpha h_{n}^{\prime}(\tau)$ in the expression for $G_{l}^{\prime}(\tau)$ tends to $+\infty$ while all other terms converge to finite values.

\begin{proof}[\bf{Part (c)}] The result follows from Lemmas \ref{lem3.1}, \ref{lem3.6}, \ref{lem3.7} and \ref{lem3.8}.
\end{proof}
\begin{proof}[\bf{Part (d)}] We now establish the uniqueness result. From our assumptions, we have $\alpha > \eta^{\prime\prime} \geq \eta^{\prime}$. According to Lemma \ref{lem3.3}, system \eqref{eq3.1} has a unique positive solution, which we denote by $(t_1, t_2, \ldots, t_n)$. Let $(u_1, u_2, \ldots, u_n)$ represent any positive solution of system \eqref{eq:1.3b}, and define $k_j = t_j^{1/q}$ and $U_j = k_j^{-1}u_j$. The existence of such a solution $(u_1, u_2, \ldots, u_n)$ is guaranteed by Lemma \ref{lem3.3}.

Given that $\eta_1 = \cdots = \eta_m = \eta^{\prime}$ and $\eta_{m+1} = \cdots = \eta_{2m} = \eta^{\prime\prime}$, we can readily observe that $t_1 = \cdots = t_m$ and $t_{m+1} = \cdots = t_{2m}$. Consequently, $k_1 = \cdots = k_m$ and $k_{m+1} = \cdots = k_{2m}$. To prove that $(u_1, u_2, \ldots, u_n)$ is the unique positive solution of system \eqref{eq:1.3b}, it suffices to demonstrate that $U_1 = U_2 = \cdots = U_n = U$.

We first prove that $U_1 = \cdots = U_m$ and $U_{m+1} = \cdots = U_{2m}$. For brevity, we will only establish that $U_1 = U_2$, as the proof for the other equalities follows the same reasoning. We proceed by contradiction: suppose $U_1 \neq U_2$ and define the set $\Omega = \{x \in \mathbb{R}^N \mid U_1(x) > U_2(x)\} \neq \emptyset$. For notational convenience, we denote $\eta := \eta^{\prime}$, $s_1 = s_2 = s^{\prime}$, and $t := t_1 = t_2$. Under these conditions, $U_1$ and $U_2$ satisfy the system:
\[
\left\{\begin{array}{l}
(-\Delta)^{s^{\prime}} U_1 = t^{-\kappa}\left(\eta t U_1^{2^*_s-1} + \alpha t U_1^{p-1}U_2^q + \alpha \sum_{j=3}^n t_j U_1^{p-1}U_j^q\right) ,\\
(-\Delta)^{s^{\prime}} U_2 = t^{-\kappa}\left(\eta t U_2^{2^*_s-1} + \alpha t U_1^q U_2^{p-1} + \alpha \sum_{j=3}^n t_j U_2^{p-1}U_j^q\right).
\end{array}\right.
\]

Multiplying the first equation by $U_2$, the second equation by $U_1$, and integrating over $\Omega$ yields:
\[
\begin{aligned}
\int_{\Omega}\left[\left((-\Delta)^{s^{\prime}} U_1\right)U_2 + \left((-\Delta)^{s^{\prime}} U_2\right)U_1\right] = &\; t^{-\kappa}\int_{\Omega}\left(\eta t U_1^{2^*_s-1}U_2 + \alpha t U_1^{p-1}U_2^{q+1} - \eta t U_2^{2^*_s-1}U_1 - \alpha t U_1^{q+1}U_2^{p-1}\right) \\
&+ t^{-\kappa}\alpha \sum_{j=3}^n t_j \int_{\Omega}U_j^q U_1 U_2\left(U_1^{p-2} - U_2^{p-2}\right) \triangleq I_1 + I_2.
\end{aligned}
\]

For the left-hand side of this equation, we have:
\[
\mathrm{LHS} \geq \int_{\mathbb{R}^N \setminus \Omega}(-\mathcal{N}_{s^{\prime}}U_1 + \mathcal{N}_{s^{\prime}}U_2)U_1 \geq 0.
\]
where $\mathcal{N}_{s^{\prime}}u(x) = C_{N,s^{\prime}}\int_{\mathbb{R}^N \setminus \Omega}\frac{u(x)-u(y)}{|x-y|^{n+2s^{\prime}}}\mathrm{d}y, ~x \in \mathbb{R}^N \setminus \Omega$ represents the nonlocal normal derivative.

On the right-hand side, we have two terms. The second term $I_2 < 0$, since $p < 2$ and $U_1 > U_2$ on $\Omega$ imply that the integrand of $I_2$ is negative throughout $\Omega$. To analyze the integral in $I_1$, we split and recombine the four terms of its integrand as follows:
\[
\begin{aligned}
&\eta t U_1^{2^*_s-1}U_2 + \alpha t U_1^{p-1}U_2^{q+1} - \eta t U_2^{2^*_s-1}U_1 - \alpha t U_1^{q+1}U_2^{p-1} \\
&\quad = \eta t U_1^{p-1}U_2\left(U_1^q - U_2^q\right) + (\alpha + \eta)t U_1^{p-1}U_2^{q+1} + \eta t U_1 U_2^{p-1}\left(U_1^q - U_2^q\right) - (\alpha + \eta)t U_1^{q+1}U_2^{p-1}.
\end{aligned}
\]

Since $U_1 > U_2$ on $\Omega$ and $\eta < \frac{\alpha + \eta}{2}$, we can rearrange the terms on the right-hand side and factorize to obtain:
\[
\begin{aligned}
&\eta t U_1^{2^*_s-1}U_2 + \alpha t U_1^{p-1}U_2^{q+1} - \eta t U_2^{2^*_s-1}U_1 - \alpha t U_1^{q+1}U_2^{p-1} \\
&\quad < \frac{\alpha + \eta}{2}t\left(U_1^{q+1}U_2 + U_1 U_2^{q+1}\right)\left(U_1^{p-2} - U_2^{p-2}\right) < 0.
\end{aligned}
\]

Thus, $I_1 < 0$. This leads to a contradiction: $0 \leq I_1 + I_2 < 0$. Therefore, $U_1 = \cdots = U_m$ and $U_{m+1} = \cdots = U_{2m}$.

Now we establish that $U_1 = U_{m+1}$. Having already proven that $U_1 = \cdots = U_m$ and $U_{m+1} = \cdots = U_{2m}$, we observe that $U_1$ and $U_{m+1}$ satisfy the system:
\[
\left\{\begin{array}{l}
(-\Delta)^{s^{\prime}} U_1 = t_1^{1-\kappa}(\eta^{\prime} + (m-1)\alpha)U_1^{2^*_s-1} + m\alpha t_1^{-\kappa}t_{m+1}U_1^{p-1}U_{m+1}^q ,\\
(-\Delta)^{s^{\prime}}U_{m+1} = t_{m+1}^{1-\kappa}(\eta^{\prime\prime} + (m-1)\alpha)U_{m+1}^{2^*_s-1} + m\alpha t_1 t_{m+1}^{-\kappa}U_1^q U_{m+1}^{p-1}.
\end{array}\right.
\]

We proceed by contradiction. If $U_1 \neq U_{m+1}$, let us assume that the set $\Omega = \{x \in \mathbb{R}^N \mid U_1(x) > U_{m+1}(x)\}$ is non-empty. Integrating over $\Omega$, we obtain:
\[
\begin{aligned}
\int_{\Omega}[&((-\Delta)^{s^{\prime}}U_1)U_{m+1} + ((-\Delta)^{s^{\prime}}U_{m+1})U_1] = \int_{\Omega}(t_1^{1-\kappa}(\eta^{\prime} + (m-1)\alpha)U_1^{2^*_s-1}U_{m+1} + m\alpha t_1^{-\kappa}t_{m+1}U_1^{p-1}U_{m+1}^{q+1} \\
&- t_{m+1}^{1-\kappa}(\eta^{\prime\prime} + (m-1)\alpha)U_1 U_{m+1}^{2^*_s-1} - m\alpha t_1 t_{m+1}^{-\kappa}U_1^{q+1}U_{m+1}^{p-1}).
\end{aligned}
\]

For the left-hand side of this equation, we have:
\[
\mathrm{LHS} \geq \int_{\mathbb{R}^N \setminus \partial\Omega}(-\mathcal{N}_{s^{\prime}}U_1 + \mathcal{N}_{s^{\prime}}U_{m+1})U_1 \geq 0.
\]

Let us denote by $G$ the integrand on the right-hand side, and reorganize its four terms as:
\[
\begin{aligned}
G = &\, t_1^{1-\kappa}(\eta^{\prime} + (m-1)\alpha)U_1^{p-1}U_{m+1}(U_1^q - U_{m+1}^q) + U_1^{p-1}U_{m+1}^{q+1} \\
&+ t_{m+1}^{1-\kappa}(\eta^{\prime\prime} + (m-1)\alpha)U_1 U_{m+1}^{p-1}(U_1^q - U_{m+1}^q) - U_1^{q+1}U_{m+1}^{p-1},
\end{aligned}
\]
where, according to system \eqref{eq3.1}, $t_1$ and $t_{m+1}$ satisfy:
\begin{equation}\label{eq3.14}
(\eta^{\prime} + (m-1)\alpha)t_1^{1-\kappa} + m\alpha t_1^{-\kappa}t_{m+1} = 1 = (\eta^{\prime\prime} + (m-1)\alpha)t_{m+1}^{1-\kappa} + m\alpha t_1 t_{m+1}^{-\kappa}.
\end{equation}

From the conditions $\eta^{\prime} \leq \eta^{\prime\prime} < \alpha$ and $(\alpha - \eta^{\prime})t_1 + t_1^{\kappa} = (\alpha - \eta^{\prime\prime})t_{m+1} + t_{m+1}^{\kappa}$, we can deduce:
\begin{equation}\label{eq3.15}
t_1 \leq t_{m+1} \quad \text{and} \quad (\alpha - \eta^{\prime\prime})t_{m+1} \leq (\alpha - \eta^{\prime})t_1.
\end{equation}

Applying the assumption from Theorem \ref{thm:2.2}(d) that $\alpha^2 - ((m+1)\eta^{\prime\prime} - (m-1)\eta^{\prime})\alpha + \eta^{\prime}\eta^{\prime\prime} > 0$, we obtain:
\begin{equation}\label{eq3.16}
\frac{\eta^{\prime\prime} + (m-1)\alpha}{m\alpha} < \frac{\alpha - \eta^{\prime\prime}}{\alpha - \eta^{\prime}}.
\end{equation}

Combining the second inequality in \eqref{eq3.15} with \eqref{eq3.16}, we derive:
\begin{equation}\label{eq3.17}
(\eta^{\prime\prime} + (m-1)\alpha)t_{m+1} < m\alpha t_1.
\end{equation}

This inequality, together with the second equality in \eqref{eq3.14}, implies $(\eta^{\prime\prime} + (m-1)\alpha)t_{m+1}^{1-\kappa} < \frac{1}{2}$. Since $\eta^{\prime} \leq \eta^{\prime\prime}$ and $t_1 \leq t_{m}$, by \eqref{eq3.17} we have $(\eta^{\prime} + (m-1)\alpha)t_1 < m\alpha t_{m+1}$, which combined with the first equality in \eqref{eq3.14} yields $(\eta^{\prime} + (m-1)\alpha)t_1^{1-\kappa} < \frac{1}{2}$. 

Incorporating these inequalities into the expression for $G$, we obtain for all $x \in \Omega$:
\[
G < \frac{1}{2}(U_1^{q+1}U_{m+1} + U_1 U_{m+1}^{q+1})(U_1^{p-2} - U_{m+1}^{p-2}) < 0.
\]

This leads to a contradiction: $0 \leq \int_{\Omega}G < 0$. Therefore, $U_1 = U_{m+1}$, which, combined with our previous results, establishes that $U_1 = U_2 = \cdots = U_n = U$, proving the uniqueness of positive solutions to system \eqref{eq:1.3b}.
\end{proof}
\section{Proof of Theorem \ref{thm:2.3}}

In this section, we prove Theorem \ref{thm:2.3}. Throughout, we assume $N > 2s$, $\eta_i > 0$, $p_{ij} = 2$, $q_{ij} = 2^*_s - 2$, and $\alpha_{ij} = \alpha$ for all $i, j \in \{1, 2, \ldots, n\}$ with $i \neq j$. Recall that $\eta_1 \leq \eta_2 \leq \cdots \leq \eta_n$. 

Note that $(k_1 U, k_2 U, \ldots, k_n U)$ is a synchronized positive solution of system \eqref{eq:1.3b} if and only if $(k_1, k_2, \ldots, k_n)$ is a positive solution of the algebraic system:
\[
\eta_i k_i^{2^*_s-2} + \alpha \sum_{j=1, j \neq i}^n k_j^{2^*_s-2} = 1, \quad i = 1, 2, \ldots, n.
\]

\begin{proof}[\bf{Part (a)}]
We can rewrite the system in the following equivalent form:
\begin{equation}\label{eq4.1}
1+(\alpha-\eta_1)k_1^{2^*_s-2} = 1+(\alpha-\eta_2)k_2^{2^*_s-2} = \cdots = 1+(\alpha-\eta_n)k_n^{2^*_s-2} = \alpha\sum_{j=1}^{n}k_j^{2^*_s-2}.
\end{equation}

For this system to admit a positive solution, one of the following conditions must be satisfied: $\alpha > \eta_n$, $0 < \alpha < \eta_1$, or $\alpha = \eta_1 = \eta_n$. Conversely, if either $\alpha > \eta_n$ or $0 < \alpha < \eta_1$, then the system \eqref{eq4.1} has a unique positive solution $(k_1, k_2, \ldots, k_n)$ given by:
\[
k_i = \left((\alpha-\eta_i)\left(\sum_{j=1}^{n}\frac{\alpha}{\alpha-\eta_j}-1\right)\right)^{\frac{-(N-2s)}{4s}},
\]

Furthermore, if $\alpha = \eta_1 = \eta_n$, then any vector $(k_1, k_2, \ldots, k_n)$ with positive components satisfying $\alpha\sum_{j=1}^{n}k_j^{2^*_s-2} = 1$ constitutes a solution.
\end{proof}
\begin{proof}[\bf{Part (b)}]
Consider the case where $\eta_1 \leq \alpha \leq \eta_n$ and $\eta_1 \neq \eta_n$. We can assume there exists some index $i \in \{1, 2, \ldots, n-1\}$ such that $\eta_i \leq \alpha \leq \eta_{i+1}$, with either $\eta_i < \alpha$ or $\alpha < \eta_{i+1}$. Suppose, for contradiction, that system \eqref{eq:1.3b} has a positive solution $(u_1, u_2, \ldots, u_n)$. If we subtract the $(i+1)$-th equation multiplied by $u_i$ from the $i$-th equation multiplied by $u_{i+1}$ and integrate, we arrive at the contradiction:
$$
0 = \int_{\mathbb{R}^N}((\eta_i-\alpha)u_i^{2^*_s-1}u_{i+1} + (\alpha-\eta_{i+1})u_i u_{i+1}^{2^*_s-1}) < 0.
$$
\end{proof}
\begin{proof}[\bf{Part (c)}]
Let $(u_1, u_2, \ldots, u_n)$ be any positive solution of \eqref{eq:1.3b} and $(k_1, k_2, \ldots, k_n)$ be the unique positive solution of equation \eqref{eq4.1}. Define $U_i = \frac{1}{k_i}u_i$ and $t_i = k_i^{2^*_s-2}$. To establish uniqueness, it suffices to prove that $U_1 = U_2 = \cdots = U_n = U$.

We proceed by contradiction. Suppose there exists a set $\Omega = \{x \in \mathbb{R}^N \mid U_1(x) > U_2(x)\} \neq \emptyset$. From the first two equations of system \eqref{eq:1.3b}:
\[
\left\{\begin{array}{l}
(-\Delta)^{s'}U_1 = \eta_1 t_1 U_1^{2^*_s-1} + \alpha t_2 U_1 U_2^{2^*_s-2} + \alpha\sum_{j=3}^{n}t_j U_1 U_j^{2^*_s-2} \\
(-\Delta)^{s'}U_2 = \eta_2 t_2 U_2^{2^*_s-1} + \alpha t_1 U_1^{2^*_s-2}U_2 + \alpha\sum_{j=3}^{n}t_j U_2 U_j^{2^*_s-2}
\end{array}\right.
\]

Multiplying the first equation by $U_2$, the second by $U_1$, and integrating over $\Omega$, we obtain:
\[
\int_{\Omega}[(-\Delta)^{s'}U_1)U_2 + (-\Delta)^{s'}U_2)U_1] = \int_{\Omega}(\eta_1 t_1 U_1^{2^*_s-1}U_2 + \alpha t_2 U_1 U_2^{2^*_s-1} - \eta_2 t_2 U_1 U_2^{2^*_s-1} - \alpha t_1 U_1^{2^*_s-1}U_2).
\]

Since $\alpha > \eta_1$, this leads to the contradiction:
\[
0 \leq \int_{\mathbb{R}^N\setminus\Omega}(-\mathcal{N}_{s'}U_1 + \mathcal{N}_{s'}U_2)U_1 = \int_{\Omega}t_1(\eta_1-\alpha)U_1 U_2(U_1^{2^*_s-2}-U_2^{2^*_s-2}) < 0.
\]
This completes the proof.
\end{proof}
\section{Proof of Theorem \ref{thm:2.4}}
Throughout this section we assume $N >2s$, $\eta_{i}>0$, $2<p_{i j}<2^{*}_{s}$, $p_{i j}+q_{i j}=2^{*}_{s}$, and $\alpha_{i j}>0$.

Consider the algebraic system in equation \eqref{eq2.1}:
\[
f_i(k_1, k_2, \ldots, k_n) := \eta_i k_i^{2^*_s-2} + \sum_{j=1, j\neq i}^{n}\alpha_{ij}k_i^{p_{ij}-2}k_j^{q_{ij}} - 1 = 0, \quad i=1,2,\ldots,n.
\]

Since $2 < p_{ij} < 2^*_s$ and $p_{ij} + q_{ij} = 2^*_s$, for sufficiently small $\varepsilon > 0$, we have:
\[
f_i(k_1, \ldots, \varepsilon, \ldots, k_n) < 0 < f_i(k_1, \ldots, \eta_i^{-(N-2s)/4s}, \ldots, k_n)
\]
for all $k_j \in [\varepsilon, \eta_j^{-(N-2s)/4s}]$ with $j \neq i$ and all $i = 1, 2, \ldots, n$. This implies that the Brouwer degree:
\[
\operatorname{deg}(f, \Omega, 0) = 1,
\]
where $\Omega$ is the $n$-dimensional cuboid defined as $\Omega := \prod_{i=1}^{n}(\varepsilon, \eta_i^{-(N-2s)/4s})$. This guarantees the existence of a synchronized positive solution of system \eqref{eq:1.3b}.

\section{Proof of Theorem \ref{thm:2.5}}

In this section, we proceed with the following assumptions: $N > 2s$, $\eta_i > 0$, $\alpha_{ij} > 0$, $p_{ij} = p \in (2, 2^*_s)$, $q_{ij} = q = 2^*_s - p$, and $\alpha_{ij} = \alpha$ for all distinct indices $i, j \in \{1, 2, \ldots, n\}$. We maintain the notation from Section 4, though with some contextual adjustments.

We define $\kappa = \frac{2-p}{q}$. In contrast to Section 4 where $\kappa \in (0,1)$, the current context has $\kappa \in (-\infty, 0)$. Following the results from Section 4, we know that number of synchronized positive solutions of system \eqref{eq:1.3b} is equivalent to number positive solutions $(t_1, \ldots, t_n, \tau)$ of system \eqref{eq3.2}.

For each $i \in \{1, 2, \ldots, n\}$, let $f_i(t)$ be defined for $t \in (0, +\infty)$ as specified in Section 4. Assume that $\eta_1 \leq \eta_2 \leq \cdots \leq \eta_n$. When $\alpha \leq \eta_1$, each function $f_i$ is strictly decreasing on the interval $(0, S_i)$ with range $(0, +\infty)$. This allows us to define an inverse decreasing function $h_i: (0, +\infty) \rightarrow (0, S_i)$ for each $f_i|_{(0, S_i)}$, where:
\[
S_i = (\eta_i-\alpha)^{-\frac{1}{1-\kappa}} = (\eta_i-\alpha)^{-\frac{(N-2s)q}{4s}},
\]
when $\alpha < \eta_i$, and $S_i = +\infty$ when $\alpha = \eta_i$. Under these conditions, determining the number of synchronized positive solutions for system \eqref{eq:1.3b} reduces to finding positive solutions of the following single algebraic equation:
\begin{equation}\label{eq5.1}
G_1(\tau) := \alpha\sum_{i=1}^{n}h_i(\tau) - \tau = 0, \quad \tau \in (0, +\infty).
\end{equation}

The function $G_1$ exhibits strict monotonicity, specifically decreasing behavior. Additionally, we observe the limiting behaviors:
\[
\lim _{\tau \rightarrow 0^{+}} G_{1}(\tau)=\alpha \sum_{i=1}^{n} S_{i}>0, \quad \lim _{\tau \rightarrow+\infty} G_{1}(\tau)=-\infty,
\]
These properties guarantee that equation \eqref{eq5.1} possesses exactly one solution. This leads us to the following lemma.

\begin{lemma}\label{lem5.1}
If $\alpha \leq \eta_{1}$ then \eqref{eq:1.3b} has exactly one synchronized positive solution.
\end{lemma}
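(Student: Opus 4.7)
The plan is to assemble the reduction steps already prepared in the excerpt. First, I would invoke the analogue of Lemma \ref{lem3.1} adapted to the current setting: since the derivation of that lemma used only $p_{ij}=p$, $q_{ij}=q$ and $\alpha_{ij}=\alpha$ (not the sign of $\kappa$), the number of synchronized positive solutions of \eqref{eq:1.3b} equals the number of positive $(t_1,\ldots,t_n,\tau)$ solving system \eqref{eq3.2}.

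Next I would exploit the monotonicity structure that the hypothesis $\alpha\leq\eta_1\leq\eta_i$ guarantees. Since $\kappa<0$, a direct check of $f_i'(t)=\kappa t^{\kappa-1}+(\alpha-\eta_i)$ shows $f_i$ is strictly decreasing on $(0,S_i)$, with $f_i(t)\to+\infty$ as $t\to 0^+$ and $f_i(t)\to 0^+$ as $t\to S_i^-$. Hence the restriction of $f_i$ to $(0,S_i)$ is a bijection onto $(0,+\infty)$ with strictly decreasing inverse $h_i$, and for every $\tau>0$ the first $n$ equations of \eqref{eq3.2} force $t_i=h_i(\tau)$ uniquely. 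Substituting into the last equation $\alpha\sum_{i=1}^n t_i=\tau$ reduces the whole system to the single scalar equation \eqref{eq5.1}, so it suffices to count the zeros of $G_1$ on $(0,+\infty)$.

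Finally I would invoke the properties of $G_1$ recorded just above the statement of the lemma. Each $h_i$ is strictly decreasing, so $G_1$ is strictly decreasing; the limit $\lim_{\tau\to 0^+}G_1(\tau)=\alpha\sum_i S_i>0$ follows because $h_i(\tau)\to S_i$ as $\tau\to 0^+$, while $\lim_{\tau\to+\infty}G_1(\tau)=-\infty$ follows since $h_i(\tau)\to 0$ (each $h_i$ is bounded) and the $-\tau$ term dominates. By strict monotonicity and the intermediate value theorem, $G_1$ has exactly one zero in $(0,+\infty)$, yielding exactly one synchronized positive solution of \eqref{eq:1.3b}.

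There is essentially no obstacle: the excerpt already establishes the monotonicity and boundary behavior of $G_1$, and the only remaining task is to package these facts together with the reduction via Lemma \ref{lem3.1} and the uniqueness of $t_i=h_i(\tau)$. The only point requiring a brief verification is the strict monotonicity of $f_i$ on $(0,S_i)$ when $\alpha<\eta_i$ versus $\alpha=\eta_1$ (in the latter case $f_1(t)=t^\kappa$ is obviously strictly decreasing on $(0,+\infty)=(0,S_1)$), but both cases are handled uniformly by the sign analysis of $f_i'$.
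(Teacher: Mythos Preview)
Your proposal is correct and follows essentially the same route as the paper: the paper also reduces to system \eqref{eq3.2}, observes that each $f_i$ is strictly decreasing on $(0,S_i)$ onto $(0,+\infty)$ when $\alpha\le\eta_1$, passes to the single scalar equation \eqref{eq5.1}, and concludes from the strict monotonicity of $G_1$ together with the limits $\lim_{\tau\to 0^+}G_1(\tau)=\alpha\sum_i S_i>0$ and $\lim_{\tau\to+\infty}G_1(\tau)=-\infty$. The only small point you leave implicit (and which the paper also glosses over) is that since $f_i'(t)=\kappa t^{\kappa-1}+(\alpha-\eta_i)<0$ on all of $(0,+\infty)$ and $f_i\le 0$ on $[S_i,\infty)$, there is no positive $t_i\ge S_i$ with $f_i(t_i)=\tau>0$, so $t_i=h_i(\tau)$ is indeed the unique positive root.
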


Let us now examine the scenario where $\alpha > \eta_1$. Under this condition, the function $f_1$ attains its minimum value:
\[
A:=\min _{0<t<+\infty} f_{1}(t)=\frac{(-\kappa)^{\frac{\kappa}{1-\kappa}}+(-\kappa)^{\frac{1}{1-\kappa}}}{\left(\alpha-\eta_{1}\right)^{\frac{\kappa}{1-\kappa}}}=\frac{((p-2) / q)^{(N-2s)(2-p) / 4s}+((p-2) / q)^{(N-2s) q / 4s}}{\left(\alpha-\eta_{1}\right)^{(N-2s)(2-p) / 4s}}
\]
This minimum occurs at the point:
\[
T:=\left(\frac{-\kappa}{\alpha-\eta_{1}}\right)^{\frac{1}{1-\kappa}}=\left(\frac{p-2}{q\left(\alpha-\eta_{1}\right)}\right)^{(N-2s) q / 4s}.
\]

For every index $i$, we can uniquely determine a value $T_i'$ with the properties:
\[
0 < T_i' \leq T, \quad f_i(T_i') = A,
\]
The restriction $f_i|_{(0, T_i']}$ is strictly decreasing, mapping $(0, T_i']$ onto $[A, +\infty)$. We define $h_i: [A, +\infty) \rightarrow (0, T_i']$ as the inverse decreasing function of this restriction.

Furthermore, for any $i$ where $\alpha > \eta_i$, there exists a unique second value $T_i''$ satisfying:
\[
T \leq T_i'', \quad f_i(T_i'') = A,
\]
In this case, $f_i|_{[T_i'', +\infty)}$ is strictly increasing from $[T_i'', +\infty)$ onto $[A, +\infty)$. We denote by $k_i: [A, +\infty) \rightarrow [T_i'', +\infty)$ the inverse increasing function of this restriction.

Several important observations: $T_1' = T_1'' = T$; all functions $h_i$ (for $i = 1, 2, \ldots, n$) are well-defined; and the function $k_i$ is well-defined if and only if $\alpha > \eta_i$. Moreover, for any $\tau \in [A, +\infty)$:
\[
h_n(\tau) \leq \cdots \leq h_2(\tau) \leq h_1(\tau) \leq h_1(A) = T.
\]

Additionally, whenever $\alpha > \eta_i$:
\[
k_i(\tau) \geq \cdots \geq k_2(\tau) \geq k_1(\tau) \geq k_1(A) = T.
\]

The graphical representations of functions $f_1$ and $f_i$ when $\alpha > \eta_i$ can be found in Figure 2 of \cite{jing2024number}.

We now introduce the parameter $\rho^*$. As in Section 3, let $I = \{1, 2, \ldots, n\}$. Define $j$ as the maximum index satisfying $\eta_j < \alpha$, and $k$ as the maximum index for which $\eta_1 = \eta_2 = \cdots = \eta_k$ (with the constraint $k \leq j$). Define $J_1, J_2, \ldots, J_{2^{j-k}}$ to be all possible subsets of the index range $\{k+1, k+2, \ldots, j\}$. Note that when $k = j$, this index range becomes empty, leaving only $J_1$ (the empty set).

We define $\rho^*$ as the number of index sets $J_l$ that satisfy:
\[
\alpha \sum_{i \in I \backslash J_{l}} h_{i}(A) + \alpha \sum_{i \in J_{l}} k_{i}(A) = A
\]

This $\rho^*$ represents the number of positive solutions $(t_1, \ldots, t_n, \tau)$ to system \eqref{eq3.2} where $\tau = A$.

The set $\{1, 2, \ldots, j\}$ contains $2^j$ subsets, which we denote as $I_1, I_2, \ldots, I_{2^j}$. For convenience in later arguments, we set $I_1 = \emptyset$. For each $l \in \{1, 2, \ldots, 2^j\}$, we define $\rho_l$ as the number of solutions to:
\[
\alpha \sum_{i \in I \backslash I_{l}} h_{i}(\tau) + \alpha \sum_{i \in I_{l}} k_{i}(\tau) = \tau, \quad \tau \in (A, +\infty).
\]

We define $\rho^{**} = \sum_{l=1}^{2^j} \rho_l$. This value $\rho^{**}$ counts the positive solutions $(t_1, \ldots, t_n, \tau)$ of system \eqref{eq3.2} where $\tau > A$.

The preceding analysis establishes the following lemma:

\begin{lemma}\label{lem5.2}  
For parameters satisfying $\alpha > \eta_1$, the total number of synchronized positive solutions to system \eqref{eq:1.3b} is given by $\rho^* + \rho^{**}$.  
\end{lemma}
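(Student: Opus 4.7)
The plan is to convert the enumeration of synchronized positive solutions of \eqref{eq:1.3b} into a count of positive solutions of the expanded algebraic system \eqref{eq3.2} (the correspondence is recorded just before the statement of the lemma and follows from the substitution $t_i = k_i^q$ used in Section 3), and then to partition that count by the value of the auxiliary variable $\tau$.

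First, I would observe that for any positive solution $(t_1, \ldots, t_n, \tau)$ of \eqref{eq3.2}, the first equation reads $\tau = f_1(t_1)$, so $\tau$ lies in the range of $f_1$ on $(0, +\infty)$, which is $[A, +\infty)$ since $A = \min_{t>0} f_1(t)$. This cleanly splits the solution set into the disjoint pieces $\{\tau = A\}$ and $\{\tau > A\}$, and it suffices to count each piece separately.

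For $\tau = A$ I would exploit the monotonicity structure of $f_i$ already established in the paragraphs preceding the lemma. The equation $f_i(t_i) = A$ has a unique solution $t_i = T$ for $1 \leq i \leq k$ (since $\eta_i = \eta_1$ makes $f_i = f_1$, whose global minimum $A$ is attained only at $T$); exactly two solutions $t_i \in \{T_i', T_i''\}$ for $k < i \leq j$ (because $f_i(T) = A - (\eta_i - \eta_1)T < A$ places the value $A$ strictly above the minimum of $f_i$, so both the decreasing and the increasing branches meet the level $A$); and the unique solution $t_i = T_i'$ for $j < i \leq n$ (since $f_i$ is strictly decreasing on $(0, S_i)$ whenever $\alpha \leq \eta_i$). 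Consequently the candidate vectors $(t_1, \ldots, t_n)$ are parameterized by the $2^{j-k}$ subsets $J_l \subseteq \{k+1, \ldots, j\}$, where $i \in J_l$ corresponds to the choice $t_i = T_i''$, and the remaining scalar equation $\alpha \sum_i t_i = A$ becomes exactly the defining relation for the count $\rho^*$. Then, for $\tau > A$, the same scheme applies: for $i \leq j$ the equation $f_i(t_i) = \tau$ has the two solutions $h_i(\tau)$ and $k_i(\tau)$ (both branches are available because $\tau$ strictly exceeds the minimum of $f_i$), while for $i > j$ it has only $h_i(\tau)$. Parameterizing by subsets $I_l \subseteq \{1, \ldots, j\}$ indicating which components pick the $k_i$-branch and substituting into $\alpha \sum_i t_i = \tau$ yields precisely the scalar equation whose number of solutions is $\rho_l$; summing over the $2^j$ subsets produces $\rho^{**}$. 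Adding the two disjoint contributions gives the claimed total $\rho^* + \rho^{**}$.

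The main obstacle is bookkeeping at the boundary $\tau = A$: one must take care not to double-count, noting that $h_i(A) = T_i'$ and $k_i(A) = T_i''$, and that these two values collapse to $T$ for $i \leq k$, which is why the relevant subsets in the $\rho^*$ count range over $\{k+1,\ldots,j\}$ rather than over all of $\{1,\ldots,j\}$. One must verify that the enumeration by $J_l$ at $\tau = A$ and by $I_l$ at $\tau > A$ is genuinely a disjoint bijection with the positive solutions in each regime before simply adding the two counts, but once the branch structure of each $f_i$ is pinned down this reduces to straightforward case analysis.
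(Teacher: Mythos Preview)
Your proposal is correct and follows precisely the approach the paper takes: the paper does not give a separate proof of Lemma~\ref{lem5.2} but simply records it as the outcome of the preceding analysis, and your write-up fills in exactly that analysis---splitting by $\tau=A$ versus $\tau>A$, using the branch structure of each $f_i$ to parameterize solutions by subsets $J_l\subseteq\{k+1,\ldots,j\}$ and $I_l\subseteq\{1,\ldots,j\}$ respectively, and noting the collapse $T_i'=T_i''=T$ for $i\le k$ to avoid overcounting. Your identification of the boundary bookkeeping at $\tau=A$ as the only delicate point is accurate.
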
  

We proceed to establish the validity of Theorem \ref{thm:2.5}.  

\begin{proof}[\bf{Part (a)}]  
When $\alpha \leq \eta_1$, the conclusion follows directly from Lemma \ref{lem5.1}. Now consider the case $\eta_1 < \alpha < \eta_2$. For $\tau \in [A, +\infty)$, define the auxiliary functions:  
\[  
G_1(\tau) := \alpha \sum_{i=1}^n h_i(\tau) - \tau, \quad  
G_2(\tau) := \alpha \sum_{i=2}^n h_i(\tau) + \alpha k_1(\tau) - \tau.  
\]  
By Lemma \ref{lem5.2}, the total solutions of \eqref{eq:1.3b} correspond to the roots of $G_1(\tau) = 0$ over $[A, +\infty)$ and the roots of $G_2(\tau) = 0$ over $(A, +\infty)$. Observe that $G_1(A) = G_2(A)$, and $G_1(\tau)$ is strictly decreasing on $[A, +\infty)$ with $\lim_{\tau \to +\infty} G_1(\tau) = -\infty$. For large $\tau$, the asymptotic behavior of $G_2(\tau)$ is:  
\[  
G_2(\tau) \approx \frac{\alpha}{\alpha - \eta_1} \tau - \tau = \frac{\eta_1}{\alpha - \eta_1} \tau,  
\]  
yielding $\lim_{\tau \to +\infty} G_2(\tau) = +\infty$.  

We assert the existence of $\delta_0 \in (0, \eta_2 - \eta_1)$ such that $G_2$ is strictly increasing on $[A, +\infty)$ for $\eta_1 < \alpha < \eta_1 + \delta_0$. Under this condition:

Case 1:  If $G_1(A) \geq 0$, then $G_1(\tau) = 0$ has one solution in $[A, +\infty)$ and $G_2(\tau) = 0$ has none.

Case 2:  If $G_1(A) < 0$, then $G_1(\tau) = 0$ has no solutions, while $G_2(\tau) = 0$ has exactly one.  

Thus, \eqref{eq:1.3b} admits exactly one synchronized positive solution for $\eta_1 < \alpha < \eta_1 + \delta_0$. Combining this with Lemma \ref{lem5.1} completes the proof of Theorem \ref{thm:2.4}(b).  

Indeed, for $\tau \in(A,+\infty)$ and $i \geq 2$, since $\kappa<0$ and $h_{i}(\tau)<h_{i}(A)<T$, we have, $\kappa h_{i}^{\kappa-1}(\tau)+\alpha-\eta_{i}<$ $\kappa T^{\kappa-1}+\alpha-\eta_{i}<0$. We also have $0<\kappa k_{1}^{\kappa-1}(\tau)+\alpha-\eta_{1}<\alpha-\eta_{1}$. Then, for $\tau \in(A,+\infty)$,
\[
G_{2}^{\prime}(\tau)=\sum_{i=2}^{n} \frac{\alpha}{\kappa h_{i}^{\kappa-1}(\tau)+\alpha-\eta_{i}}+\frac{\alpha}{\kappa k_{1}^{\kappa-1}(\tau)+\alpha-\eta_{1}}-1>-\sum_{i=2}^{n} \frac{\alpha}{\eta_{i}-\eta_{1}}+\frac{\alpha}{\alpha-\eta_{1}}-1 .
\]

From this estimate, it is easy to find a positive number $\delta_{0}$ such that if $\eta_{1}<\alpha \leq \eta_{1}+\delta_{0}$ then for $\tau \in(A,+\infty), G_{2}^{\prime}(\tau)>0$ and thus $G_{2}(\tau)$ is strictly increasing in $[A,+\infty)$.
\end{proof}
Now we prove Theorem \ref{thm:2.5}(b). Assume $\alpha > \eta_1$ and let $j$ denote the maximal integer with $\alpha > \eta_j$. For each $l = 2, \ldots, 2^j$, define:  
\[
G_{l}(\tau):=\alpha \sum_{i \in I \backslash I_{l}} h_{i}(\tau)+\alpha \sum_{i \in I_{l}} k_{i}(\tau)-\tau, \quad \tau \in[A,+\infty),
\]
and we consider the equation $G_{l}(\tau)=0$ for $\tau \in(A,+\infty)$. Since $\emptyset \neq I_{s} \subset\{1, \cdots, j\}$ and for $\tau$ large enough $G_{l}(\tau) \simeq \sum_{i \in I_{l}} \frac{\alpha \tau}{\alpha-\eta_{i}}-\tau$, we have $\lim _{\tau \rightarrow+\infty} G_{l}(\tau)=+\infty$. To achieve the conclusion of Theorem \ref{thm:2.5}(b), we prove that $G_{l}(A)<0$ if $p>2$ and $p$ is sufficiently close to 2 . We have
\[
G_{l}(A)=\alpha \sum_{i \in I \backslash I_{s}} T_{i}^{\prime}+\alpha \sum_{i \in I_{s}} T_{i}^{\prime \prime}-A \leq n \alpha T_{j}^{\prime \prime}-A .
\]
Estimating $A$ and $T_j''$ requires the following lemma:  

\begin{lemma}\label{lem5.3}
Assume $2 < p < 1 + \frac{2^*_s}{2}$. Then
$$
\min\left\{1, \sqrt{\alpha-\eta_1}\right\} \leq A \leq \left(e^{e^{-1}} + 1\right) \max\left\{1, \sqrt{\alpha-\eta_1}\right\}.
$$
\end{lemma}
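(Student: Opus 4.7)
The plan is to factor $A$ as a product of $(\alpha-\eta_1)^\mu$ times a dimensionless ``shape'' function of a single auxiliary variable, then bound each factor separately. Since $\kappa=(2-p)/q<0$ and $1-\kappa=(2^*_s-2)/q>0$, setting
\[
\mu:=\frac{-\kappa}{1-\kappa}=\frac{p-2}{2^*_s-2},\qquad y:=\frac{q}{p-2}>1,
\]
gives $\mu=\tfrac{1}{y+1}$, $1-\mu=\tfrac{y}{y+1}$, and the hypothesis $2<p<1+2^*_s/2$ translates precisely into $0<\mu<1/2$ (equivalently $y>1$). Substituting $u=(p-2)/q=1/y$ into the closed form of $A$ yields
\[
A=(\alpha-\eta_1)^{\mu}\bigl[u^{-\mu}+u^{1-\mu}\bigr]
=(\alpha-\eta_1)^{\mu}\bigl[y^{1/(y+1)}+y^{-y/(y+1)}\bigr]
=:(\alpha-\eta_1)^{\mu}\,\Phi(y).
\]

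For the upper bound I would bound $\Phi(y)$ uniformly on $(1,\infty)$. The second summand satisfies $y^{-y/(y+1)}<1$ since $y>1$. For the first summand, because $y>1$ and $1/(y+1)<1/y$, one has $y^{1/(y+1)}<y^{1/y}$, and the standard calculus inequality $y^{1/y}\le e^{1/e}$ (maximum at $y=e$) gives $y^{1/(y+1)}<e^{e^{-1}}$. Hence $\Phi(y)<e^{e^{-1}}+1$ for all admissible $y$. Then the factor $(\alpha-\eta_1)^\mu$ is controlled by $\max\{1,\sqrt{\alpha-\eta_1}\}$: when $\alpha-\eta_1\ge 1$ the exponent $\mu<1/2$ gives $(\alpha-\eta_1)^\mu\le(\alpha-\eta_1)^{1/2}$, and when $\alpha-\eta_1<1$ it gives $(\alpha-\eta_1)^\mu<1$. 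Multiplying yields the required upper estimate.

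For the lower bound I would simply note $\Phi(y)\ge y^{1/(y+1)}\ge 1$ for $y\ge 1$, so $A\ge(\alpha-\eta_1)^\mu$. The same dichotomy on $\alpha-\eta_1$ finishes it: when $\alpha-\eta_1\ge 1$, $(\alpha-\eta_1)^\mu\ge 1=\min\{1,\sqrt{\alpha-\eta_1}\}$; when $\alpha-\eta_1<1$, the inequality $\mu<1/2$ combined with $\ln(\alpha-\eta_1)<0$ gives $(\alpha-\eta_1)^\mu\ge(\alpha-\eta_1)^{1/2}=\min\{1,\sqrt{\alpha-\eta_1}\}$.

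The main (and essentially only) obstacle is translating the awkward exponents $(N-2s)(2-p)/(4s)$ and $(N-2s)q/(4s)$ into the tidy parameters $\mu$ and $1-\mu$ using the identity $(2^*_s-2)(N-2s)/(4s)=1$; everything after that is bookkeeping plus the classical inequality $y^{1/y}\le e^{1/e}$. I would carry out the steps in the order (i) parameter reduction, (ii) factorization of $A$, (iii) upper bound on $\Phi$, (iv) $\max\{1,\sqrt{\cdot}\}$ dichotomy, (v) matching lower bound.
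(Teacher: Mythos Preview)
Your proof is correct and follows essentially the same route as the paper. The paper also splits $A$ into the factor $(\alpha-\eta_1)^{-\kappa/(1-\kappa)}$ (your $(\alpha-\eta_1)^\mu$) and the numerator $(-\kappa)^{\kappa/(1-\kappa)}+(-\kappa)^{1/(1-\kappa)}$ (your $\Phi(y)$), and bounds them by exactly the same inequalities: $1\le(-\kappa)^{\kappa/(1-\kappa)}\le e^{e^{-1}}$, $0<(-\kappa)^{1/(1-\kappa)}<1$, and $\min\{1,\sqrt{\alpha-\eta_1}\}\le(\alpha-\eta_1)^{-\kappa/(1-\kappa)}\le\max\{1,\sqrt{\alpha-\eta_1}\}$; your substitution $y=q/(p-2)$ and the step $y^{1/(y+1)}<y^{1/y}\le e^{1/e}$ simply make these bounds more explicit.
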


\begin{proof}
Given $2 < p < 1 + \frac{2^*_s}{2}$, we have $-1 < \kappa = \frac{2-p}{2^*_s-p} < 0$. It is straightforward to check that
$$
1 \leq (-\kappa)^{\frac{\kappa}{1-\kappa}} \leq e^{e^{-1}}, \quad 0 < (-\kappa)^{\frac{1}{1-\kappa}} < 1,
$$
and
$$
\min\left\{1, \sqrt{\alpha-\eta_1}\right\} \leq (\alpha-\eta_1)^{-\frac{\kappa}{1-\kappa}} \leq \max\left\{1, \sqrt{\alpha-\eta_1}\right\}.
$$
Thus, the statement follows from the definition of $A$.

To estimate $T_j''$, note that it depends implicitly on $p$, so we write $T_j'' = T_j''(p)$. The next lemma provides the required estimate.
\end{proof}

\begin{lemma}\label{lem5.4}
For any $\xi \in (0,1)$,
$$
T_j''(p) = O\left((p-2)^{\xi}\right) \quad \text{as} \quad p \to 2^+.
$$
\end{lemma}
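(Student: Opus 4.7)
The strategy is a test-point argument on the increasing branch of $f_j$. Since $f_j$ is strictly increasing from $[T_j'', +\infty)$ onto $[A, +\infty)$, it suffices to exhibit, for each $\xi \in (0,1)$, a point $\tau_p := C(p-2)^\xi$ lying past the minimum of $f_j$ and satisfying $f_j(\tau_p) > A$, for all $p$ sufficiently close to $2^+$. Once this is done, monotonicity of $f_j$ on $[\tilde T_j, +\infty)$ (where $\tilde T_j$ denotes the minimizer of $f_j$) forces $\tau_p > T_j''$, which is the claim. Set $\mu := -\kappa = (p-2)/(2_s^* - p) > 0$, so $\mu \sim (p-2)/(2_s^* - 2)$ as $p \to 2^+$. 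Since $\tilde T_j = (\mu/(\alpha-\eta_j))^{1/(1+\mu)}$ and $T$ are both of order $\mu \asymp p-2$, and $(p-2)^\xi \gg p-2$ for $\xi < 1$, the test point $\tau_p$ exceeds $\max\{T,\tilde T_j\}$ for $p$ sufficiently close to $2^+$; in particular $\tau_p$ lies on the increasing branch of $f_j$.

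The crux is an asymptotic expansion of $f_j(\tau_p) - A$ in the parameter $\mu$. Writing $A = (1+\mu) T^{-\mu}$ with $T = (\mu/(\alpha-\eta_1))^{1/(1+\mu)}$ and Taylor-expanding $T^{-\mu} = \exp(-\mu \ln T)$ using $\mu \ln T = \mu \ln \mu - \mu \ln(\alpha-\eta_1) + O(\mu^2 |\ln \mu|)$, a direct computation gives
\[
A - 1 = -\mu \ln \mu + \mu\bigl(1 + \ln(\alpha-\eta_1)\bigr) + O(\mu^2 \ln^2 \mu).
\]
Using $\ln(p-2) = \ln \mu + O(1)$, similarly
\[
\tau_p^{-\mu} - 1 = -\mu \ln \tau_p + O((\mu \ln \mu)^2) = -\xi \mu \ln \mu + O(\mu).
\]
Combining with the linear term $(\alpha-\eta_j)\tau_p = (\alpha-\eta_j) C (p-2)^\xi$, we obtain
\[
f_j(\tau_p) - A = (\alpha-\eta_j) C (p-2)^\xi + (1-\xi)\,\mu \ln \mu + O(\mu).
\]

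The proof is finished by a scale comparison. The first term on the right is positive of order $(p-2)^\xi$; the second is negative of order $\mu |\ln \mu| \asymp (p-2)|\ln(p-2)|$ (since $\mu \ln \mu < 0$ and $1-\xi > 0$); the error is $O(p-2)$. The ratio of the leading positive term to the leading negative term is of order $(p-2)^{\xi-1}/|\ln(p-2)|$, which diverges to $+\infty$ as $p \to 2^+$ because $\xi - 1 < 0$ gives polynomial blow-up while $|\ln(p-2)|$ grows only logarithmically. Hence $f_j(\tau_p) > A$ for $p$ sufficiently close to $2^+$, which gives $T_j''(p) \leq C(p-2)^\xi$ and completes the proof.

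The main obstacle is the delicacy of the asymptotic expansion: both $A$ and $\tau_p^{-\mu}$ tend to $1$, and the relevant corrections sit at the scale $\mu \ln \mu$, which is smaller than $(p-2)^\xi$ for $\xi < 1$ but larger than $\mu$ itself. Pushing the expansion to this precision (and no further) is the bookkeeping heart of the argument; once it is in place, the final scale comparison $\mu^\xi \gg \mu |\ln \mu|$ makes the inequality immediate.
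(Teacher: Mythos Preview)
Your proof is correct but proceeds along a genuinely different line from the paper. The paper first uses the crude bound $(\alpha-\eta_j)T_j'' \leq A$ together with Lemma~\ref{lem5.3} to get a uniform bound $T_j'' \leq M$, and then applies Young's inequality to $A = (T_j'')^{\kappa} + (\alpha-\eta_j)T_j''$ with the exponents $r = (-\kappa)^{-\xi}$ and $l = (1-(-\kappa)^{\xi})^{-1}$; after using $(T_j'')^{\kappa/l} \geq M^{\kappa/l}$ and solving for $T_j''$, one obtains an explicit upper bound whose constants can be shown to converge as $\kappa \to 0^{-}$, yielding $T_j'' = O((-\kappa)^{\xi}) = O((p-2)^{\xi})$. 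Your argument instead selects a test point $\tau_p = C(p-2)^{\xi}$ on the increasing branch of $f_j$ and establishes $f_j(\tau_p) > A$ via a second-order asymptotic expansion in $\mu = -\kappa$, the decisive comparison being $(p-2)^{\xi} \gg (p-2)|\ln(p-2)|$ for $\xi < 1$. The paper's route is more algebraic and yields an explicit (if unwieldy) bound valid on a definite interval of $p$; your route is shorter once the asymptotics are in hand and makes the competing scales transparent, but requires slightly more care in the error tracking (the $\mu\ln\mu$ corrections) and only yields the estimate for $p$ sufficiently close to $2$---which, of course, is all the big-$O$ statement asks.
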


\begin{proof}
Suppose $2 < p < 1 + \frac{2^*_s}{2}$. Then $(-\kappa)^{\xi} \in (0,1)$. Since
$$
A = (T_j'')^{\kappa} + (\alpha-\eta_j) T_j'' \geq (\alpha-\eta_j) T_j'',
$$
we get
$$
T_j'' \leq \frac{1}{\alpha-\eta_j} A.
$$
By Lemma~\ref{lem5.3}, we have
$$
T_j'' \leq M := \frac{1}{\alpha-\eta_j} \left(e^{e^{-1}} + 1\right) \max\left\{1, \sqrt{\alpha-\eta_1}\right\}.
$$

Applying Young's inequality, we obtain
$$
A \geq \left( r(\alpha-\eta_j)T_j'' \right)^{1/r} \left( l (T_j'')^{\kappa} \right)^{1/l},
$$
where $r = \frac{1}{(-\kappa)^{\xi}}$ and $l = \frac{1}{1-(-\kappa)^{\xi}}$. Since $\kappa < 0$ and $T_j'' \leq M$, it follows that
$$
A \geq r^{1/r} l^{1/l} (\alpha-\eta_j)^{1/r} M^{\kappa/l} (T_j'')^{1/r}.
$$
Therefore,
$$
T_j'' \leq r^{-1} l^{-r/l} (\alpha-\eta_j)^{-1} M^{-r\kappa/l} A^{r},
$$
which can be rewritten as
$$
T_j'' \leq (\alpha-\eta_j)^{-1} (-\kappa)^{\xi} (1-(-\kappa)^{\xi})^{(-\kappa)^{-\xi}-1} M^{(-\kappa)^{1-\xi}+\kappa} A^{(-\kappa)^{-\xi}}.
$$

Noting the form of $A$, we see
$$
A^{(-\kappa)^{-\xi}} = (-\kappa)^{-(-\kappa)^{1-\xi}/(1-\kappa)} (1-\kappa)^{(-\kappa)^{-\xi}} (\alpha-\eta_1)^{(-\kappa)^{1-\xi}/(1-\kappa)},
$$
and a direct calculation shows $\lim_{\kappa \to 0^-} A^{(-\kappa)^{-\xi}} = 1$. Also,
$$
\lim_{\kappa \to 0^-} (1-(-\kappa)^{\xi})^{(-\kappa)^{-\xi}-1} M^{(-\kappa)^{1-\xi}+\kappa} = e^{-1}.
$$

Since $p \to 2^+$ is equivalent to $\kappa \to 0^-$, as $p \to 2^+$,
$$
T_j''(p) = O\left((-\kappa)^{\xi}\right) = O\left((p-2)^{\xi}\right).
$$
\end{proof}

\begin{proof}[\textbf{Part (b)}]
Fix $\xi \in (0,1)$. By Lemmas~\ref{lem5.3} and~\ref{lem5.4}, there exist $C = C(\alpha) > 0$ and $p_0 = p_0(\alpha) > 2$ such that for $p \in (2, p_0)$ and $l = 2, \ldots, 2^j$,
$$
G_l(A) \leq n\alpha T_j'' - A \leq C(p-2)^{\xi} - \min\left\{1, \sqrt{\alpha-\eta_1}\right\}.
$$
Hence, there is $p_1 = p_1(\alpha) \in (2, p_0)$ so that for $p \in (2, p_1)$ and $l = 2, \ldots, 2^j$, $G_l(A) < 0$. This means that for $l = 2, \ldots, 2^j$, the equation $G_l(\tau) = 0$ has a solution in $(A, +\infty)$, since $G_l(\tau) > 0$ for large $\tau$. By Lemma~\ref{lem5.2}, \eqref{eq:1.3b} has at least $2^j-1$ synchronized positive solutions.
\end{proof}

\begin{proof}[\textbf{Part (c)}]
For $l = 2, 3, \ldots, 2^j$, since $I_l \neq \emptyset$ and $k_i(\tau) \geq k_1(\tau)$,
$$
G_l(\tau) = \alpha \sum_{i \in I \setminus I_l} h_i(\tau) + \alpha \sum_{i \in I_l} k_i(\tau) - \tau > \alpha k_1(\tau) - \tau, \quad \tau \in [A, +\infty).
$$
Since $k_1(\tau) \geq k_1(A) = T$ and $\kappa < 0$, we have $k_1^{\kappa-1}(\tau) \leq T^{\kappa-1} = \frac{\eta_1-\alpha}{\kappa}$. Using $p \geq \frac{\eta_1}{\alpha} 2 + \left(1-\frac{\eta_1}{\alpha}\right) 2^*_s$ yields $\frac{\eta_1-\alpha}{\kappa} \leq \eta_1$. Thus, for $l = 2, \ldots, 2^j$ and $\tau \in [A, +\infty)$,
$$
G_l(\tau) > \alpha k_1(\tau) - \left[ k_1^{\kappa}(\tau) + (\alpha-\eta_1)k_1(\tau) \right] = k_1(\tau)\left(\eta_1 - k_1^{\kappa-1}(\tau)\right) \geq 0.
$$
This shows that for $l = 2, \ldots, 2^j$, $G_l(\tau) = 0$ has no solution in $[A, +\infty)$. Since $G_1(\tau) = \alpha \sum_{i=1}^n h_i(\tau) - \tau$ is strictly decreasing, $\lim_{\tau \to +\infty} G_1(\tau) = -\infty$, and
$$
G_1(A) > \alpha h_1(A) - A = \alpha k_1(A) - A \geq 0,
$$
$G_1(\tau) = 0$ has exactly one solution in $[A, +\infty)$. Therefore, \eqref{eq:1.3b} has a unique synchronized positive solution.
\end{proof}

\section*{Acknowledgment}

A.D. is supported by DST INSPIRE Fellowship with sanction number DST/INSPIRE Fellowship/2022/IF220580. T.M. is supported by CSIR-HRDG grant with grant sanction No. 25/0324/23/EMR-II.


\bibliography{ref}

\begin{thebibliography}{10}

\bibitem{abada2021topological}
{\sc E.~Abada, H.~Lakhal, and M.~Maouni}, {\em Topological degree method for fractional laplacian system}, Bull. Math. Anal. Appl, 13 (2021), pp.~10--19.

\bibitem{al2018existence}
{\sc H.~Al~Saud and H.~Hajaiej}, {\em Existence of minimizers of multi-constrained variational problems for product functions},  (2018).

\bibitem{ambrosetti2007standing}
{\sc A.~Ambrosetti and E.~Colorado}, {\em Standing waves of some coupled nonlinear schr{\"o}dinger equations.}, Journal of the London Mathematical Society, 75 (2007).

\bibitem{ambrosetti1973dual}
{\sc A.~Ambrosetti and P.~H. Rabinowitz}, {\em Dual variational methods in critical point theory and applications}, Journal of functional Analysis, 14 (1973), pp.~349--381.

\bibitem{ambrosetti1986note}
{\sc A.~Ambrosetti and M.~Struwe}, {\em A note on the problem- $\delta$u= $\lambda$u+ u| u| 2*- 2}, manuscripta mathematica, 54 (1986), pp.~373--379.

\bibitem{bartsch2010liouville}
{\sc T.~Bartsch, N.~Dancer, and Z.-Q. Wang}, {\em A liouville theorem, a-priori bounds, and bifurcating branches of positive solutions for a nonlinear elliptic system}, Calculus of Variations and Partial Differential Equations, 37 (2010), pp.~345--361.

\bibitem{bhaktanonhomogeneous}
{\sc M.~Bhakta, S.~Chakraborty, and P.~Pucci}, {\em Nonhomogeneous systems involving critical or subcritical nonlinearities}, Differential Integral Equations,  (2020).

\bibitem{bisci2015ground}
{\sc G.~M. Bisci and V.~D. Radulescu}, {\em Ground state solutions of scalar field fractional schr{\"o}dinger equations}, Calculus of Variations and Partial Differential Equations, 54 (2015), pp.~2985--3008.

\bibitem{bonder2018concentration}
{\sc J.~F. Bonder, N.~Saintier, and A.~Silva}, {\em The concentration-compactness principle for fractional order sobolev spaces in unbounded domains and applications to the generalized fractional brezis--nirenberg problem}, Nonlinear Differential Equations and Applications NoDEA, 25 (2018), pp.~1--25.

\bibitem{brezis1983positive}
{\sc H.~Br{\'e}zis and L.~Nirenberg}, {\em Positive solutions of nonlinear elliptic equations involving critical sobolev exponents}, Communications on pure and applied mathematics, 36 (1983), pp.~437--477.

\bibitem{byeon2020positive}
{\sc J.~Byeon, O.~Kwon, and J.~Seok}, {\em Positive vector solutions for nonlinear schr{\"o}dinger systems with strong interspecies attractive forces}, Journal de Math{\'e}matiques Pures et Appliqu{\'e}es, 143 (2020), pp.~73--115.

\bibitem{byeon2021partly}
{\sc J.~Byeon, Y.~Lee, and S.-H. Moon}, {\em Partly clustering solutions of nonlinear schr{\"o}dinger systems with mixed interactions}, Journal of Functional Analysis, 280 (2021), p.~108987.

\bibitem{caffarelli2012non}
{\sc L.~Caffarelli}, {\em Non-local diffusions, drifts and games}, in Nonlinear Partial Differential Equations: The Abel Symposium 2010, Springer, 2012, pp.~37--52.

\bibitem{caffarelli2007extension}
{\sc L.~Caffarelli and L.~Silvestre}, {\em An extension problem related to the fractional laplacian}, Communications in partial differential equations, 32 (2007), pp.~1245--1260.

\bibitem{capozzi1985existence}
{\sc A.~Capozzi, D.~Fortunato, and G.~Palmieri}, {\em An existence result for nonlinear elliptic problems involving critical sobolev exponent}, in Annales de l'Institut Henri Poincar{\'e} C, Analyse non lin{\'e}aire, vol.~2, Elsevier, 1985, pp.~463--470.

\bibitem{chen2015positive}
{\sc Z.~Chen and W.~Zou}, {\em Positive least energy solutions and phase separation for coupled schr{\"o}dinger equations with critical exponent: higher dimensional case}, Calculus of Variations and Partial Differential Equations, 52 (2015), pp.~423--467.

\bibitem{chergui2023existence}
{\sc L.~Chergui, T.~Gou, and H.~Hajaiej}, {\em Existence and dynamics of normalized solutions to nonlinear schr{\"o}dinger equations with mixed fractional laplacians}, Calculus of Variations and Partial Differential Equations, 62 (2023), p.~208.

\bibitem{clapp2018existence}
{\sc M.~Clapp and A.~Pistoia}, {\em Existence and phase separation of entire solutions to a pure critical competitive elliptic system}, Calculus of Variations and Partial Differential Equations, 57 (2018), pp.~1--20.

\bibitem{clapp2019simple}
{\sc M.~Clapp and A.~Szulkin}, {\em A simple variational approach to weakly coupled competitive elliptic systems}, Nonlinear Differential Equations and Applications NoDEA, 26 (2019), pp.~1--21.

\bibitem{dancer2010priori}
{\sc E.~Dancer, J.~Wei, and T.~Weth}, {\em A priori bounds versus multiple existence of positive solutions for a nonlinear schr{\"o}dinger system}, in Annales de l'IHP Analyse non lin{\'e}aire, vol.~27, 2010, pp.~953--969.

\bibitem{dihich}
{\sc E.~Di~Nezza, G.~Palatucci, and E.~Valdinoci}, {\em Hitchhiker{\textquoteright}s guide to the fractional sobolev spaces}, Bulletin des sciences math{\'e}matiques, 136 (2012), pp.~521--573.

\bibitem{fang2014infinitely}
{\sc F.~Fang}, {\em Infinitely many non-radial sign-changing solutions for a fractional laplacian equation with critical nonlinearity}, arXiv preprint arXiv:1408.3187,  (2014).

\bibitem{fiscella2018p}
{\sc A.~Fiscella and P.~Pucci}, {\em (p, q) systems with critical terms in rn}, Nonlinear Analysis, 177 (2018), pp.~454--479.

\bibitem{gladiali2016non}
{\sc F.~Gladiali, M.~Grossi, and C.~Troestler}, {\em A non-variational system involving the critical sobolev exponent. the radial case}, arXiv preprint arXiv:1603.05641,  (2016).

\bibitem{guo2017critical}
{\sc Z.~Guo, S.~Luo, and W.~Zou}, {\em On critical systems involving fractional laplacian}, Journal of Mathematical Analysis and Applications, 446 (2017), pp.~681--706.

\bibitem{hajaiej2023normalized}
{\sc H.~Hajaiej and L.~Song}, {\em Normalized ground state solutions for fractional schrodinger systems with general nonlinearities}, arXiv preprint arXiv:2311.16846,  (2023).

\bibitem{huang2025lazer}
{\sc S.~Huang and H.~Hajaiej}, {\em Lazer-mckenna type problem involving mixed local and nonlocal elliptic operators}, Nonlinear Differential Equations and Applications NoDEA, 32 (2025), p.~6.

\bibitem{jing2024number}
{\sc Y.~Jing, H.~Liu, Y.~Liu, Z.~Liu, and J.~Wei}, {\em The number of positive solutions for n n-coupled elliptic systems}, Journal of the London Mathematical Society, 110 (2024), p.~e70040.

\bibitem{li2018multiple}
{\sc Q.~Li and Z.~Yang}, {\em Multiple positive solutions for a fractional laplacian system with critical nonlinearities}, Bulletin of the Malaysian Mathematical Sciences Society, 41 (2018), pp.~1879--1905.

\bibitem{lu2020existence}
{\sc G.~Lu and Y.~Shen}, {\em Existence of solutions to fractional p-laplacian systems with homogeneous nonlinearities of critical sobolev growth}, Advanced Nonlinear Studies, 20 (2020), pp.~579--597.

\bibitem{luo2022normalized}
{\sc T.~Luo and H.~Hajaiej}, {\em Normalized solutions for a class of scalar field equations involving mixed fractional laplacians}, Advanced Nonlinear Studies, 22 (2022), pp.~228--247.

\bibitem{mingqi2018combined}
{\sc X.~Mingqi, V.~D. Radulescu, and B.~Zhang}, {\em Combined effects for fractional schr{\"o}dinger--kirchhoff systems with critical nonlinearities}, ESAIM: Control, Optimisation and Calculus of Variations, 24 (2018), pp.~1249--1273.

\bibitem{peng2016elliptic}
{\sc S.~Peng, Y.-f. Peng, and Z.-Q. Wang}, {\em On elliptic systems with sobolev critical growth}, Calculus of Variations and Partial Differential Equations, 55 (2016), pp.~1--30.

\bibitem{peng2013segregated}
{\sc S.~Peng and Z.-q. Wang}, {\em Segregated and synchronized vector solutions for nonlinear schr{\"o}dinger systems}, Archive for Rational Mechanics and Analysis, 208 (2013), p.~305.

\bibitem{rabinowitz1986minimax}
{\sc P.~H. Rabinowitz et~al.}, {\em Minimax methods in critical point theory with applications to differential equations}, American Mathematical Soc., 1986.

\bibitem{MR3271254}
{\sc R.~Servadei and E.~Valdinoci}, {\em The brezis-nirenberg result for the fractional laplacian}, Transactions of the American Mathematical Society, 367 (2015), pp.~67--102.

\bibitem{tavares2020existence}
{\sc H.~Tavares and S.~You}, {\em Existence of least energy positive solutions to schr{\"o}dinger systems with mixed competition and cooperation terms: the critical case}, Calculus of Variations and Partial Differential Equations, 59 (2020), p.~26.

\bibitem{vazquez2012nonlinear}
{\sc J.~L. V{\'a}zquez}, {\em Nonlinear diffusion with fractional laplacian operators}, in Nonlinear partial differential equations: the Abel Symposium 2010, Springer, 2012, pp.~271--298.

\bibitem{wang2017fractional}
{\sc L.~Wang, B.~Zhang, and H.~Zhang}, {\em Fractional laplacian system involving doubly critical nonlinearities in r\^{} n}, Electronic Journal of Qualitative Theory of Differential Equations, 2017 (2017), pp.~1--17.

\bibitem{xiang2019superlinear}
{\sc M.~Xiang, B.~Zhang, and V.~D. Radulescu}, {\em Superlinear schr{\"o}dinger--kirchhoff type problems involving the fractional p--laplacian and critical exponent}, Advances in Nonlinear Analysis, 9 (2019), pp.~690--709.

\end{thebibliography}
\bibliographystyle{siam}

 \end{document}